\DeclareMathOperator*{\slim}{s-lim}
\numberwithin{equation}{section}
\theoremstyle{definition}
\newtheorem{rem}{Remark}[section]
\theoremstyle{plain}  
\newtheorem{thm}[rem]{Theorem}
\newtheorem{prop}[rem]{Proposition}
\newtheorem{lem}[rem]{Lemma}
\newtheorem{cor}[rem]{Corollary}
\newcommand{\K}{\mathbb{K}}
\newcommand{\R}{\mathbb{R}}
\newcommand{\C}{\mathbb{C}}
\newcommand{\Q}{\mathbb{Q}}
\newcommand{\N}{\mathbb{N}}
\newcommand{\dx}{\, dx}
\newcommand{\dy}{\, dy}
\renewcommand{\phi}{\varphi}
\newcommand{\dom}{\operatorname{dom}}
\newcommand{\Rea}{\operatorname{Re}}
\newcommand{\oneover}[1]{\frac{1}{\raisebox{0.1em}{$\scriptscriptstyle#1$}}}
\newcommand{\pq}{\oneover p-\oneover q}
\newcommand{\ind}{\mathbf{1}}
\newcommand{\DL}{\Delta_{\textnormal D}}
\newcommand{\rma}{{\textnormal{(a)\ }}}
\newcommand{\rmb}{{\textnormal{(b)\ }}}
\newcommand{\rmc}{{\textnormal{(c)\ }}}
\newcommand*\setcol{\nobreak\mskip2mu\mathclose{}
  \mathopen{;}\penalty300\mskip6mu plus3mu minus1mu\relax}
\title{Sharp Gaussian upper bounds for \\ Schrödinger semigroups on the half-line}
\author{Paul Holst and Hendrik Vogt}
\date{}
\begin{document}

\maketitle

\begin{abstract}
In 1998, V. Liskevich and Y. Semenov proved sharp Gaussian upper bounds for Schrödinger semigroups on $\R^3$ with potentials satisfying a global Kato class condition. Using similar basic ideas we show sharp Gaussian upper bounds for Schrödinger semigroups on the half-line, also assuming a suitable global Kato class condition. Our proof strategy includes a new technique of weighted ultracontractivity estimates. 

\vspace{8pt}

\noindent MSC 2020: 35J10, 35K08, 47A55, 47D06 

\vspace{2pt}

\noindent Keywords: Gaussian upper bounds, boundary behaviour,  Schrödinger semigroups, absorption semigroups, weighted ultracontractivity estimates
\end{abstract}

\section{Introduction}\label{ch-intro}

In this paper we
show a kernel estimate for the $C_0$-semigroup generated by the Schrödinger operator $\Delta + V$ on $(0,\infty)$ 
for potentials $V$ satisfying the integral condition given by~\eqref{int-cond} below.
This integral condition amounts to a global Kato class condition with respect to the Dirichlet Laplacian on $(0,\infty)$; see Remark~\ref{global-Kato}.

Throughout the paper let $\K \in \{\R,\C\}$ and $d \in \N$.
Let $\Omega \subseteq \R^d$ be open, and let $T$ be the $C_0$-semigroup on $L_2(\Omega)$ generated by the Dirichlet Laplacian on $\Omega$, i.e., by the operator $\DL$ defined by
\begin{align*}
\dom(\DL)&:= \{ u \in H_0^1(\Omega) \setcol \ \Delta u \in L_2(\Omega) \} \\
\DL u &:=\Delta u.
\end{align*}

Our main result is as follows.

\begin{thm}\label{thm-main}
Let $d = 1$ and $\Omega = (0,\infty)$. Let $V \colon (0,\infty) \rightarrow \R$ be measurable, and assume that there exists $\alpha \in (0,1)$ such that
\begin{equation}\label{int-cond}
\int_0^{\infty} x|V(x)| \dx \leqslant \alpha.
\end{equation}
Then $V$ is $T$-admissible, for every $t > 0$ the operator $T_V(t)$ has an integral kernel $k_t^V \in L_{\infty}((0,\infty) \times (0,\infty))$, and there exists $c > 0$ such that
\begin{equation}\label{the-kernel-estimate}
0 \leqslant k_t^V(x,y) \leqslant c \left( 1 \wedge \left( \frac{xy}{t}\left(1 + \frac{(x-y)^2}{4t}\right)^{3\alpha/4} \right) \right) t^{-1/2} e^{- (x-y)^2/4t}
\end{equation}
for all $t > 0$ and a.e.\ $x,y \in (0,\infty)$.
\end{thm}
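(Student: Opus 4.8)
The plan is to combine four ingredients: a Hardy-type form bound coming from~\eqref{int-cond}, Nash-type ultracontractivity (in both a plain and a weighted version), Davies--Gaffney off-diagonal estimates with the \emph{sharp} Gaussian constant, and the Coulhon--Sikora wave-equation method to pass from on-diagonal plus off-diagonal information to the pointwise bound. For the form bound, note that every $u\in H_0^1(0,\infty)$ is absolutely continuous with $u(0)=0$, so $|u(x)|^2=|\int_0^x u'|^2\le x\|u'\|_2^2$, whence $\int_0^\infty|V|\,|u|^2\le\bigl(\int_0^\infty x|V(x)|\dx\bigr)\|u'\|_2^2\le\alpha\|u'\|_2^2$. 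Thus $|V|$ is form-bounded with respect to $-\DL$ with relative bound $\le\alpha<1$; this is exactly the criterion that makes $V$ $T$-admissible, and by the KLMN theorem the form sum $H:=-\DL-V$, with form domain $H_0^1(0,\infty)$ and form $a_V[u]=\|u'\|_2^2-\int_0^\infty V|u|^2$, is self-adjoint with $(1-\alpha)(-\DL)\le H\le(1+\alpha)(-\DL)$; in particular $H\ge0$, $T_V(t)=e^{-tH}$, and $T_V$ is positivity-preserving (as is standard for absorption semigroups of a positivity-preserving semigroup), so $k_t^V\ge0$.

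\emph{Plain and weighted ultracontractivity.} Since $H\ge(1-\alpha)(-\DL)$, the one-dimensional Nash inequality passes from $-\DL$ to $a_V$ (with its constant multiplied by $(1-\alpha)^{-1}$), so $\|T_V(t)\|_{1\to\infty}\le c\,t^{-1/2}$ for all $t>0$; in particular $k_t^V\in L_\infty$. For the weighted estimate, observe that $\mathrm{id}\colon x\mapsto x$ is $\Delta$-harmonic and vanishes at the boundary, so $u=\mathrm{id}\cdot v$ defines a unitary map $L_2((0,\infty),x^2\dx)\to L_2((0,\infty),\dx)$ conjugating $H$ to $\widetilde H=-v''-\tfrac2x v'-Vv$ on $L_2((0,\infty),x^2\dx)$, i.e.\ to (the radial part of) $-\Delta-V$ on $\R^3$; here $\int_0^\infty x|V(x)|\dx=\tfrac1{4\pi}\int_{\R^3}|V(|\xi|)|/|\xi|\,d\xi$, and the inequality above, applied to $u=\mathrm{id}\cdot v$, gives $\widetilde H\ge(1-\alpha)\widetilde H_0$ with $\widetilde H_0$ the free radial Laplacian. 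Hence the three-dimensional Nash inequality transfers and $\|e^{-t\widetilde H}\|_{L_1(x^2\dx)\to L_\infty}\le c\,t^{-3/2}$. Since $e^{-tH}=\mathrm{id}\cdot e^{-t\widetilde H}\cdot\mathrm{id}^{-1}$, the kernels satisfy $k_t^V(x,y)=xy\,\widetilde k_t(x,y)$ with $\widetilde k_t$ the kernel of $e^{-t\widetilde H}$ with respect to $x^2\dx$; thus $k_t^V(x,y)\le c\,xy\,t^{-3/2}$, and combined with the plain bound, $k_t^V(x,y)\le c\,(1\wedge\tfrac{xy}{t})\,t^{-1/2}$.

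\emph{Off-diagonal bounds with the sharp constant.} For real, bounded, Lipschitz $\psi$ and $\rho\in\R$, the algebraic identity (valid on $H_0^1(0,\infty)$, with $v=e^{-\rho\psi}u$)
\[
\Rea a_V[u,e^{-2\rho\psi}u]=a_V[v]-\rho^2\!\int_0^\infty(\psi')^2|v|^2\dx\ \ge\ -\rho^2\|\psi'\|_\infty^2\|v\|_2^2
\]
(using $a_V\ge0$) yields, via Gronwall applied to $t\mapsto\|e^{-\rho\psi}T_V(t)g\|_2^2$, the bound $\|e^{-\rho\psi}T_V(t)e^{\rho\psi}\|_{2\to2}\le e^{\rho^2\|\psi'\|_\infty^2 t}$. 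Taking $\psi=\min(\dist(\cdot,\mathrm{supp}\,g),r)$ and optimizing $\rho=r/2t$ gives the Davies--Gaffney estimate $|\langle T_V(t)f,g\rangle|\le e^{-r^2/4t}\|f\|_2\|g\|_2$ whenever $\dist(\mathrm{supp}\,f,\mathrm{supp}\,g)\ge r$, and, verbatim, the analogous estimate for $e^{-t\widetilde H}$ on $L_2((0,\infty),x^2\dx)$. Equivalently, $\cos(s\sqrt H)$ and $\cos(s\sqrt{\widetilde H})$ propagate at speed at most $1$.

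\emph{Conclusion via the Coulhon--Sikora method, and the main obstacle.} Writing $e^{-tH}=(\pi t)^{-1/2}\int_0^\infty e^{-s^2/4t}\cos(s\sqrt H)\,ds$, splitting at $s=|x-y|$ (where finite speed of propagation kills the contribution of small $s$ between the relevant supports) and feeding the on-diagonal bound into the remaining integral as in Coulhon--Sikora, one upgrades the plain bound to $k_t^V(x,y)\le c\,t^{-1/2}e^{-(x-y)^2/4t}$; running the identical argument in the weighted space with $\|e^{-t\widetilde H}\|_{L_1(x^2\dx)\to L_\infty}\le c\,t^{-3/2}$ gives $\widetilde k_t(x,y)\le c\,t^{-3/2}(1+\tfrac{(x-y)^2}{4t})^{3/2}e^{-(x-y)^2/4t}$, the polynomial factor---with exponent matching the power in the on-diagonal bound---being the cost of the method. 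Since $k_t^V=xy\,\widetilde k_t$, taking the minimum of the two bounds reproduces exactly~\eqref{the-kernel-estimate}. The heart of the proof, and the place I expect the most work, is the weighted ultracontractivity step together with its passage through the Coulhon--Sikora machinery: one must carry the sharp constant $1/4$ through the wave-equation argument in the \emph{degenerate} weighted space $((0,\infty),|\cdot-\cdot|,x^2\dx)$, and check that combining the outputs with exponents $1$ and $3$ yields precisely the claimed minimum. Making the ground-state substitution rigorous on a suitable core (the weight $\mathrm{id}$ is singular at $0$ and unbounded at $\infty$) also requires some care.
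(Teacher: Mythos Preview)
Your overall architecture—form smallness, on-diagonal ultracontractivity in the plain and in the $x^2\,dx$-weighted space, Davies--Gaffney with the sharp constant, then Coulhon--Sikora—is close to the paper's strategy, and your $\R^3$ ground-state picture for the weighted problem is a pleasant way to see why the weighted on-diagonal exponent is $3/2$. Two genuine gaps remain, however.

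\textbf{Missing $L_1$-boundedness.} Your passage from the Nash inequality (plain or three-dimensional) to $\|T_V(t)\|_{1\to\infty}\leqslant ct^{-1/2}$ and $\|e^{-t\widetilde H}\|_{L_1(x^2dx)\to L_\infty}\leqslant ct^{-3/2}$ silently uses that the perturbed semigroup is $L_1$-bounded: the Nash ODE argument differentiates $\|T_V(t)f\|_2^2$ and needs a uniform bound on $\|T_V(t)f\|_1$ to integrate. The inequality $H\geqslant(1-\alpha)(-\DL)$ gives $L_2$-contractivity but says nothing about $L_1$, and since $V$ may be negative, $T_V\leqslant T$ fails. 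This is precisely the content of the paper's Section~4: one shows from the Green function estimate $G_\lambda(x,y)\leqslant x\wedge y$ that $\|V(\lambda-\DL)^{-1}\|_{1\to1}\leqslant\alpha$ (and the analogous weighted bound $\|V(\lambda-\DL^m)^{-1}\|_{L_1(m^2\lambda)\to L_1(m^2\lambda)}\leqslant\alpha$), and a Miyadera--Voigt perturbation argument then gives $\|T_V(t)\|_{1\to1}\leqslant(1-\alpha)^{-1}$ and $\|m^{-1}T_V(t)m\|_{L_1(m^2\lambda)\to L_1(m^2\lambda)}\leqslant(1-\alpha)^{-1}$. Without this input your ultracontractivity step does not close; once you have it, your Nash route works and is a legitimate alternative to the paper's extrapolation argument.

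\textbf{The plain bound via Coulhon--Sikora does carry a polynomial factor.} You correctly note that in the weighted space the Coulhon--Sikora output is $\widetilde k_t(x,y)\leqslant ct^{-3/2}(1+(x-y)^2/4t)^{3/2}e^{-(x-y)^2/4t}$, the polynomial exponent matching the on-diagonal exponent. But the same method applied in the unweighted space with on-diagonal exponent $1/2$ yields $k_t^V(x,y)\leqslant ct^{-1/2}(1+(x-y)^2/4t)^{1/2}e^{-(x-y)^2/4t}$, not the uncorrected bound you claim. Taking the minimum would then give $\bigl((1+(x-y)^2/4t)^{1/2}\wedge\frac{xy}{t}(1+(x-y)^2/4t)^{3/2}\bigr)t^{-1/2}e^{-(x-y)^2/4t}$, which is strictly weaker than~\eqref{the-kernel-estimate}: the ``$1$'' in the stated minimum is \emph{uncorrected}, and the paper explicitly emphasises this feature. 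The paper obtains the uncorrected plain bound not by Coulhon--Sikora but by the Davies exponential-weight method: one needs $\|\rho_\xi T_V(t)\rho_\xi^{-1}\|_{1\to1}\leqslant Me^{|\xi|^2t}$ for all $\xi\in\R$ (again a Miyadera--Voigt consequence of $e^{\xi x}G_\lambda(x,y)e^{-\xi y}\leqslant x\wedge y$ for $\lambda>\xi^2$), which together with weighted $L_2$-contractivity and the free heat ultracontractivity is bootstrapped to $\|\rho_\xi T_V(t)\rho_\xi^{-1}\|_{1\to\infty}\leqslant ct^{-1/2}e^{|\xi|^2t}$; optimising in $\xi$ yields the sharp $ct^{-1/2}e^{-(x-y)^2/4t}$. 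Your Davies--Gaffney computation gives the weighted $L_2$-bound but not the weighted $L_1$-bound, so as written it does not deliver the uncorrected plain estimate.

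In short: your form-bound derivation is cleaner than the paper's indirect route, and the $\R^3$ picture is a nice conceptual gain; but you must supply the (plain, exponentially weighted, and $m$-weighted) $L_1$-bounds for $T_V$, and for the uncorrected ``$1$'' you must use the Davies exponential-weight trick rather than Coulhon--Sikora.
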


Recall that $k_t^V$ being an integral kernel of the operator $T_V(t)$ means that $T_V(t)f(x) = \int_0^\infty k_t^V(x,y)f(y)\,dy$ for all $f\in L_2(0,\infty)$ and a.e.\ $x\in(0,\infty)$.
In Theorem~\ref{thm-necessary} we will show that in the case $V\leqslant0$ the integral condition~\eqref{int-cond} for some $\alpha<\infty$ is in fact necessary for the validity of the kernel estimate~\eqref{the-kernel-estimate}.

For $V=0$ the kernel $k_t^0$ satisfies
\[
\frac12 \Bigl(1 \wedge \frac{xy}{t} \Bigr) (4\pi t)^{-1/2}e^{-(x-y)^2/4t} \leqslant k_t^0(x,y) \leqslant \Bigl(1 \wedge \frac{xy}{t} \Bigr) (4\pi t)^{-1/2}e^{-(x-y)^2/4t},
\]
see~\eqref{kernel-est-0}.
The term $1 \wedge \frac{xy}{t}$ describes the boundary behaviour: it is `small' if $x$ and~$y$ are `close' to the boundary $\{0\}$ of~$(0,\infty)$ (where the meaning of `close' depends on~$t$).
Observe that the Gaussian upper bound~\eqref{the-kernel-estimate} is `almost sharp'; in particular, the factor~4 in the exponential term $e^{-(x-y)^2/4t}$ is the same as in $k_t^0$. In comparison with $k_t^0$, the major difference is the polynomial correction factor $\bigl(1 + (x-y)^2/4t\bigr)\rule{0pt}{1.6ex}^{3\alpha/4}$ for the term~$\frac{xy}{t}$.
It is an open question whether this term can be avoided.

In the general context of ultracontractive self-adjoint $C_0$-semigroups on metric measure spaces with the doubling property it is known that a Davies-Gaffney estimate implies the kernel estimate
\[
k_t(x,y) \leqslant c \mu\bigl(B(x,\sqrt t)\bigr)^{-1/2} \mu\bigl(B(y,\sqrt t)\bigr)^{-1/2} \left(1+\frac{\rho(x,y)^2}{4t}\right)^{(D-1)/2} e^{-\rho(x,y)^2/4t},
\]
where $\mu$ is the measure, $\rho$ the metric and $D$ the dimension defined via the polynomial volume growth that holds because of the doubling property; see \cite[Remark~3 after Theorem~5]{sik04}.
(Note that in \cite[p.\,651, line~-2]{sik04} the exponent should read $D-1$ and not $(D-1)/2$.)
In our context, the above result would lead to a kernel estimate similar to~\eqref{the-kernel-estimate}, but with the (larger!)\ exponent $1$ instead of $3\alpha/4$ in the polynomial correction factor since we will be working with the measure $x^2\dx$ on $(0,\infty)$, for which one obtains $D=3$.
(We refer to \cite{mol75} for an example showing that in general the exponent $(D-1)/2$ is sharp.)

We point out that the~1 in the boundary term $1 \wedge \frac{xy}{t}$ goes without a correction factor.
To that effect, the estimate~\eqref{the-kernel-estimate} in Theorem~\ref{thm-main} is in the spirit of \cite[Cor.~1]{LiSe98}, one of the few results on kernel estimates that do not include a polynomial correction factor either. There is vast literature on kernel estimates including a polynomial correction factor; see for instance \cite{DaPa89} and \cite{Ouh06}. In \cite{Ouh06} it was shown that in many circumstances, Gaussian upper bounds ``automatically'' improve to sharp bounds
with a polynomial correction factor.

\smallskip

In the remainder of this introduction we explain the major steps in the proof of
Theorem~\ref{thm-main}.
We will first show a kernel estimate for Schrödinger semigroups on an arbitrary open set $\Omega\subseteq\R^d$ under the assumption that the potential $V$
satisfies the form smallness condition
\begin{equation}\label{form-small0}
\int_{\Omega} |V||u|^2 \dx \leqslant \alpha \langle -\DL u,u \rangle = \alpha \|\nabla u\|_2^2
\qquad (u \in \dom(\DL))
\end{equation}
for some $\alpha \in (0,1)$
and that
the
Schrödinger semigroup~$T_V$
satisfies a suitable exponentially weighted
$L_1$-estimate; see Theorem~\ref{kernel-bound-exponential}.
More precisely, we show the kernel estimate
\begin{equation}\label{kernel-estimate-exponential}
0 \leqslant k_t^V(x,y) \leqslant c t^{-d/2} e^{-|x-y|^2/4t} \qquad (t>0,\ \text{a.e. } x,y \in \Omega).
\end{equation}
Note that there is no restriction on the dimension~$d$ in this kernel estimate. It will be proved by means of weighted ultracontractivity estimates
and the well-known Davies trick (see \cite{Dav87});
in Subsection~\ref{sec-ultracon-bounds} we develop new techniques that allow us to avoid the use of (weighted) sesquilinear forms that is common in this context.

In Section~\ref{ch-kernel-bounds-phragmen} we consider the special case in which $\Omega$ is the positive half-space, i.e.,
\[
\Omega = \Omega_0:= \begin{cases} 
(0,\infty) \qquad  \qquad &\text{if } d=1, \\
(0,\infty) \times \R^{d-1} &\text{if } d \geqslant 2.
\end{cases}
\]
In Subsection~\ref{ch-ultra-stoch-heat-semi} we prove exponentially weighted $L_1$- and ultracontractivity estimates for the heat semigroup~$T$ with suitable weights.
They will be needed later in that section, where we show a second kernel estimate for the Schrödinger semigroup~$T_V$,
\begin{equation}\label{kernel-estimate-double}
0 \leqslant k_t^V(x,y) \leqslant cx_1y_1t^{-(d/2 + 1)} \left(1 + \frac{|x-y|^2}{4t} \right)^{\alpha(d+2)/4} e^{-|x-y|^2/4t}
\end{equation}
for all $t>0$ and a.e.\ $x,y \in \Omega_0$,
again using the technique of weighted ultracontractivity estimates and Davies trick.
This kernel estimate will be proved under condition~\eqref{form-small0} and the assumption that $T_V$ satisfies suitable weighted $L_1$-estimates; see Theorem~\ref{ultra-Schr-semi-double}.

Taking the minimum of \eqref{kernel-estimate-exponential} and~\eqref{kernel-estimate-double} yields the kernel estimate
\begin{equation}\label{kernel-estimate-exponential-double}
0 \leqslant k_t^V(x,y) \leqslant c \left(1 \wedge \left( \frac{x_1y_1}{t} \left(1 + \frac{|x-y|^2}{4t} \right)^{\alpha(d+2)/4} \right) \right) t^{-d/2}e^{- |x -y|^2/4t} 
\end{equation}
for all $t>0$ and a.e.\ $x,y \in \Omega_0$, see Corollary~\ref{kernel-bound-Schr-semi}. In the case $d = 1$, \eqref{kernel-estimate-exponential-double} is exactly the kernel estimate stated in Theorem~\ref{thm-main} above. Therefore, for the proof of Theorem~\ref{thm-main} it remains to show that \eqref{int-cond} implies the assumptions indicated above for the validity of \eqref{kernel-estimate-exponential} and~\eqref{kernel-estimate-double};
this will be done in Section~\ref{ch-estimates-exponential-double}.

The basic idea for the proof of~\eqref{kernel-estimate-exponential} is similar as in \cite{LiSe98}.
There the case of Schrödinger semigroups on~$\Omega = \R^3$ is treated, based on the following main observation:
for every $\xi \in \R^3$ the Green kernel $G_\xi$ of the resolvent $(|\xi|^2 - \DL)^{-1}$ of the Dirichlet Laplacian satisfies the weighted estimate $e^{\xi \cdot x}G_{\xi}(x,y)e^{- \xi \cdot y} \leqslant \lim_{\xi\to0} G_\xi(x,y)$, for all $x,y\in\R^3$.
In our case $\Omega=(0,\infty)\subseteq\R^1$ an analogous weighted estimate is valid, as we will show in Subsection~\ref{sec-kernel-resolvent}.

\medskip

\emph{Notation.}
For
$x,y \in [-\infty,\infty]$
we denote by $x \wedge y$ and $x \vee y$ the minimum and the maximum of $x$ and $y$, respectively. Similarly, if $A$ is a set and $f,g \colon A \rightarrow [-\infty,\infty]$, then
$f \wedge g$ and $f \vee g$ is the pointwise minimum and maximum of $f$ and $g$, respectively. Moreover we write $[f \geqslant g]:= \{x \in A \setcol f(x) \geqslant g(x)\}$; the sets $[f \leqslant g]$, $[f> g]$ and $[f = g]$ are defined in a similar way. 

We write $\C_+ := \{z \in \C \setcol \Rea z > 0\}$ for the right complex half-plane.  

If $X,Y$ are two Banach spaces, then we denote by $\mathcal{L}(X,Y)$ the space of all bounded linear operators from $X$ to $Y$. In the case $X=Y$ we simply write $\mathcal{L}(X):= \mathcal{L}(X,X)$. A sequence $(A_n)_{n \in \N}$ in $\mathcal{L}(X,Y)$ is called \textbf{strongly convergent} to $A \in \mathcal{L}(X,Y)$, abbreviated $A = \slim_{n \to \infty} A_n$, if $Ax = \lim_{n \to \infty} A_nx$ for
 all $x \in X$.  

Now let $(\Omega,\mathcal{A},\mu)$ be a measure space. We write $\ind_A$ for the indicator function of a set $A \in \mathcal{A}$. For a measurable function $\rho\colon\Omega \rightarrow [0,\infty)$ we denote by $\rho \mu$ the measure that has density $\rho$ with respect to $\mu$.  If $p \in [1,\infty)$ and $V:\Omega \rightarrow \K$ is measurable, then we denote by $V$ the associated multiplication operator on $L_p(\mu)$ as well. Furthermore, if $p,q,r \in [1,\infty]$ and $A\colon L_p(\mu) \rightarrow L_p(\mu)$ is linear, then we write 
\[
   \|A\|_{q \rightarrow r} := \|A\|_{L_q(\mu) \rightarrow L_r(\mu)} := \sup\bigl\{\|Af\|_r \setcol f \in L_p \cap L_q(\mu),\ \|f\|_q \leqslant 1\bigr\}
   \ (\in [0,\infty]).
\]
More generally, if $\rho_1,\rho_2\colon\Omega \rightarrow (0,\infty)$ are measurable, we also write
\begin{align*}
&\|\rho_1^{-1} A \rho_1\|_{L_q(\rho_2\mu) \rightarrow L_r(\rho_2\mu)} \\
& \quad 
:= \sup\bigl\{\|\rho_1^{-1} A \rho_1 f\|_{L_r(\rho_2\mu)} \setcol f \in L_q(\rho_2\mu),\ \|f\|_{L_q(\rho_2\mu)} \leqslant 1,\ \rho_1 f \in L_p(\mu)\bigr\}.
\end{align*}
In the case $\rho_2 = 1$ we simply write $\|\rho_1^{-1} A \rho_1\|_{q \rightarrow r} := \|\rho_1^{-1} A \rho_1\|_{L_q(\mu) \rightarrow L_r(\mu)}$.

Finally, let $H$ be a Hilbert space. Then we denote the scalar product of $x,y$ in~$H$ by $\langle x,y \rangle_H$. We also write $\langle x,y \rangle$ if it is obvious from the context that this is the scalar product on~$H$.

\section{Kernel estimates for Schrödinger semigroups via weighted ultracontractivity estimates}\label{ch-kernel-bounds-exponential}

The ultimate goal of this section is the proof of the kernel estimate~\eqref{kernel-estimate-exponential}. To achieve this goal,
we will show that the 
Schrödinger semigroup~$T_V$ provided with exponential weights is ultracontractive and then use the well-known Davies trick.
The main tool to prove this ultracontractivity will be given in Subsection~\ref{sec-ultracon-bounds},
where we show more generally for a positive self-adjoint $C_0$-semigroup $T$ on an arbitrary measure space and a suitable $T$-admissible potential~$V$  that a weighted ultracontractivity estimate for~$T$  implies a similar estimate for the perturbed $C_0$-semigroup~$T_V$; see Theorem~\ref{weighted-ultracon}.
The kernel estimate~\eqref{kernel-estimate-exponential} will then be proved in Subsection~\ref{sec-kernel-bounds-exponential}. In Subsection~\ref{sec-admis} we first recall some important
properties of admissible potentials. Subsection~\ref{sec_lc} provides an interpolation inequality for positive $C_0$-semigroups, which we will prove using logarithmically convex funtions. It will be important for the proof of the perturbation results in Subsection \ref{sec-ultracon-bounds}.

\subsection{Admissibility of real-valued measurable potentials}\label{sec-admis}

The purpose of this subsection is to recall the notion of admissibility for real-valued measurable potentials, which was introduced in \cite[Sec.~2]{Vo86}. For this let $(\Omega,\mu)$ be a measure space, $p \in [1,\infty)$ and $T$ a positive $C_0$-semigroup on $L_p(\mu)$ with generator $A$. 

For $V \in L_{\infty}(\mu)$, the $C_0$-semigroup generated by $A-V$ is denoted by $T_V$.
If $V,W \in L_{\infty}(\mu)$,\, $V \geqslant W$, then $0 \leqslant T_V(t) \leqslant T_W(t)$ for all $t \geqslant 0$ (see \cite[Rem.~2.1(a)]{Vo86}); in particular, $T_V$ is positive.

If now $V\colon\Omega \rightarrow [0,\infty)$ is measurable, then we have
\[
0 \leqslant T_{V \wedge (n+1)}(t) \leqslant T_{V \wedge n}(t) \qquad (t \geqslant 0)
\]
and, therefore, $T_V(t):= \slim_{n \rightarrow \infty} T_{V \wedge n}(t)$ exists for all $t \geqslant 0$ by dominated convergence. The function $V$ is called \textit{$T$-admissible} if $T_V\colon[0,\infty) \rightarrow \mathcal{L}(L_p(\mu))$ thus defined is a $C_0$-semigroup. 

Similarly, a measurable function $V\colon\Omega \rightarrow (-\infty,0]$ is called \textit{$T$-admissible} if $T_V(t):= \slim_{n \rightarrow \infty}T_{V \vee (-n)}(t)$ exists for all $t \geqslant 0$ and $T_V\colon[0,\infty) \rightarrow \mathcal{L}(L_p(\mu))$ is a $C_0$-semigroup. 

More generally, a measurable function $V\colon\Omega \rightarrow \R$ is called \textit{$T$-admissible} if $V^+$ and $-V^-$ are $T$-admissible. In this case $T_V(t):= \slim_{n \rightarrow \infty} T_{(V \wedge n) \vee (-n)}(t)$ exists and $T_V\colon[0,\infty) \rightarrow \mathcal{L}(L_p(\mu))$ thus defined is a $C_0$-semigroup; see \cite[Thm.~2.6]{Vo88}. (Note that for $V \in L_{\infty}(\mu)$ (resp. $V \geqslant 0$, $V \leqslant 0$) the two definitions of $T_V$ given above coincide.)

The following basic properties of admissible potentials will be used througout without further notice.
Let $V\colon\Omega \rightarrow \R$ be $T$-admissible. Then $T_V$ is positive because $T_{(V \wedge n) \vee (-n)}$ is positive for all $n \in \N$.
If $W \colon \Omega \to \R$ is $T$-admissible and $W \leqslant V$, then $T_V(t) \leqslant T_W(t)$ for all $t \geqslant 0$; see \cite[Remark~2.7]{Vo88}. Finally, if $p=2$ and $T$ is self-adjoint, then $T_V$ is self-adjoint. Indeed, $A - (V \wedge n) \vee (-n)$ is self-adjoint for all $n \in \N$, and thus $T_V(t) = \slim_{n \rightarrow \infty} T_{V \wedge n}(t)$ is self-adjoint for all $t \geqslant 0$.

Note that for $p \in [1, \infty)$ and a measurable function $m \colon \Omega \to (0,\infty)$, the mapping $L_p(m^p\mu) \ni f \mapsto m f \in L_p(\mu)$ is an isometric lattice isomorphism.
Thus, if $T$ is a positive $C_0$-semi\-group on $L_p(\mu)$, then we can define a positive $C_0$-semi\-group $T^m$ on $L_p(m^p\mu)$ by
\[
T^m(t)f:= m^{-1}T(t)m f \qquad (t \geqslant 0, \ f \in L_p(m^p\mu)).
\] 
In the next lemma we characterize $T^m$-admissibility of a potential $V$ by means of the $C_0$-semi\-group~$T$.

\begin{lem}\label{admis-char}
Let $p \in [1,\infty)$, and let $T$ be a positive $C_0$-semigroup on $L_p(\mu)$ with generator~$A$. Let $m\colon \Omega \to (0,\infty)$ and $V: \Omega \to \R$ be measurable. Then $V$ is $T^m$-admissible if and only if $V$ is $T$-admissible. Moreover, in this case one has $(T^m)_V = (T_V)^m$.
\end{lem}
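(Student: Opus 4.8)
The plan is to reduce everything to the definition of $T$-admissibility via truncations, exploiting the fact that the map $J_m\colon L_p(m^p\mu)\to L_p(\mu),\ f\mapsto mf$, is an isometric lattice isomorphism intertwining $T^m$ and $T$. First I would record the key algebraic observation: for any bounded measurable $W$ (in particular any truncation $(V\wedge n)\vee(-n)$), the operator $(T^m)_W$ on $L_p(m^p\mu)$ equals $J_m^{-1}(T_W)J_m$. This follows because $A^m := J_m^{-1}AJ_m$ is the generator of $T^m$, hence $A^m - W = J_m^{-1}(A-W)J_m$ generates $J_m^{-1}(T_W)J_m$, which is a $C_0$-semigroup since $T_W$ is one; by uniqueness of the generated semigroup this is $(T^m)_W$. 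So on the level of $L_\infty$ potentials, $(T^m)_W = (T_W)^m$ already holds.

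Next I would treat non-negative $V$: since $J_m$ is a lattice isomorphism (in particular order- and norm-preserving), and $(T^m)_{V\wedge n} = (T_{V\wedge n})^m = J_m^{-1}T_{V\wedge n}J_m$, the strong limit $\slim_n (T^m)_{V\wedge n}(t)$ exists if and only if $\slim_n T_{V\wedge n}(t)$ exists, and in that case the limits correspond under $J_m$. Moreover $J_m$ conjugation preserves the semigroup property and strong continuity, so the limiting family is a $C_0$-semigroup on $L_p(m^p\mu)$ exactly when $T_V$ is a $C_0$-semigroup on $L_p(\mu)$. This gives: $V\geqslant 0$ is $T^m$-admissible iff it is $T$-admissible, with $(T^m)_V = (T_V)^m$. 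The same argument with $\vee(-n)$ in place of $\wedge n$ handles $V\leqslant 0$. For general real $V$, admissibility is defined via $V^+$ and $-V^-$, so the two halves combine; and one checks $(T^m)_V = J_m^{-1}T_VJ_m$ by applying the correspondence to the truncations $(V\wedge n)\vee(-n)$ and passing to the strong limit, invoking \cite[Thm.~2.6]{Vo88} for the convergence $T_{(V\wedge n)\vee(-n)}(t)\to T_V(t)$ on the $L_p(\mu)$ side and its image under $J_m$ on the other side.

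The only mildly delicate point — which I expect to be the main obstacle, though it is really bookkeeping rather than a genuine difficulty — is making sure the strong-limit characterization and the $C_0$-property transfer cleanly through $J_m$ when $J_m$ is merely an isomorphism of Banach lattices and not an isometry of a fixed space; but since $J_m$ is a bounded bijection with bounded inverse, strong convergence of $S_n$ on $L_p(\mu)$ is equivalent to strong convergence of $J_m^{-1}S_nJ_m$ on $L_p(m^p\mu)$ to the conjugated limit, and strong continuity and the semigroup law are obviously preserved under such conjugation. Once this is in place, no further estimates are needed and the proof is complete.
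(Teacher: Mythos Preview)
Your proposal is correct and follows essentially the same approach as the paper: both proofs first establish $(T^m)_W=(T_W)^m$ for bounded $W$ via the conjugated generator $A^m=J_m^{-1}AJ_m$, then transfer admissibility of $\pm V^\pm$ through the isometric lattice isomorphism $J_m$ by passing to strong limits along the truncations, and finally obtain $(T^m)_V=(T_V)^m$ from the identity for $(V\wedge n)\vee(-n)$. Your write-up is slightly more explicit about why strong limits and the $C_0$-property survive conjugation, but there is no substantive difference in strategy (and your hedging about $J_m$ being ``merely an isomorphism'' is unnecessary, since $J_m$ is in fact an isometry).
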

\begin{proof}
Observe that the generator $A^m$ of~$T^m$ is given by
\begin{align*}
\dom(A^m)&:= \{f \in L_p(m^{p}\mu) \setcol m f \in \dom(A) \}, \\
A^mf &:= m^{-1} Am f \qquad (f \in \dom(A^m)).
\end{align*}
For $W \in L_{\infty}(\mu) = L_{\infty}(m^p\mu)$ it is straightforward to show that $A^m + W = (A + W)^m$ and hence $(T^m)_W = (T_W)^m$.
From this identity and the definition of admissibility one easily infers that $\pm V^{\pm}$ is $T$-admissible if and only if $\pm V^{\pm}$ is $T^m$-admissible, using that $L_p(m^p\mu) \ni f \mapsto m f \in L_p(\mu)$ is an isometry. This proves the first assertion.
The second assertion then follows from the identity $(T^m)_{(V \wedge n) \vee (-n)} = (T_{(V \wedge n) \vee (-n)})^m$.
\end{proof}

To conclude this subsection, we show that in the case $p=2$, a potential~$V$ is $T$-admissible if it is form small with respect to the generator of~$T$.

\begin{prop}\label{admissibility}
Let $T$ be a positive $C_0$-semigroup on $L_2(\mu)$ and $A$ its generator. Let $V\colon \Omega \rightarrow \R$ be measurable, and assume that 
\begin{equation}\label{form-small}
\int_{\Omega} |V||u|^2 \,d\mu \leqslant \Rea \langle -Au,u \rangle \qquad (u \in \dom(A)).  
\end{equation}
Then $V$ is $T$-admissible. Moreover, $T_V$ is contractive.
\end{prop}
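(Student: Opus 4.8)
The plan is to reduce everything to a single observation: condition~\eqref{form-small} forces the generators of \emph{all} the relevant truncated perturbed semigroups to be dissipative, so that each of them is a contraction, and then Voigt's boundedness criterion \cite[Prop.~2.2]{Vo88} immediately yields admissibility. As a preliminary step I would note that inserting $u\in\dom(A)$ into~\eqref{form-small} gives $\Rea\langle -Au,u\rangle\geqslant\int_\Omega|V||u|^2\,d\mu\geqslant0$, so that $A$ is dissipative and $T$ itself is a contraction semigroup; this will be reused below.

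Next I would handle $V^+$ and $-V^-$ separately, as required by the definition of admissibility. For $n\in\N$ the potential $V^+\wedge n$ is bounded, so the generator of $T_{V^+\wedge n}$ is $A-V^+\wedge n$, and for $u\in\dom(A)$ one has $\Rea\langle -(A-V^+\wedge n)u,u\rangle=\Rea\langle -Au,u\rangle+\int_\Omega(V^+\wedge n)|u|^2\,d\mu\geqslant0$; hence each $T_{V^+\wedge n}$ is contractive, so $\sup\{\|T_{V^+\wedge n}(t)\|\setcol t\in[0,1],\ n\in\N\}\leqslant1$, and \cite[Prop.~2.2]{Vo88} shows that $V^+$ is $T$-admissible, with $T_{V^+}(t)$ a contraction for every $t$ as a strong limit of contractions. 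For $-V^-$ the generator of $T_{-(V^-\wedge n)}$ is $A+V^-\wedge n$, and since $V^-\wedge n\leqslant|V|$, condition~\eqref{form-small} gives $\Rea\langle -(A+V^-\wedge n)u,u\rangle=\Rea\langle -Au,u\rangle-\int_\Omega(V^-\wedge n)|u|^2\,d\mu\geqslant\Rea\langle -Au,u\rangle-\int_\Omega|V||u|^2\,d\mu\geqslant0$; so again each $T_{-(V^-\wedge n)}$ is contractive, \cite[Prop.~2.2]{Vo88} applies, and $-V^-$ is $T$-admissible with $T_{-V^-}$ contractive. By the definition of admissibility, $V=V^++(-V^-)$ is then $T$-admissible.

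Finally, for the contractivity of $T_V$ I would run the same dissipativity computation with $V_n:=(V\wedge n)\vee(-n)\in L_\infty(\mu)$: since $V_n\geqslant-(V^-\wedge n)\geqslant-|V|$, one obtains $\Rea\langle -(A-V_n)u,u\rangle\geqslant0$ for $u\in\dom(A)$, hence $\|T_{V_n}(t)\|\leqslant1$, and passing to the strong limit $T_V(t)=\slim_{n\to\infty}T_{V_n}(t)$ gives $\|T_V(t)\|\leqslant1$ for all $t\geqslant0$. I do not expect a genuine obstacle here; the only points requiring care are the bookkeeping of signs in the truncations and the fact that for the negative part $-V^-$ the truncated semigroups \emph{increase} in $n$, so the strong limit is not automatic and the uniform bound supplied by~\eqref{form-small} is genuinely needed — which is precisely the place where the full strength of the hypothesis enters.
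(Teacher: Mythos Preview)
Your argument for $-V^-$ and for the final contractivity of $T_V$ is correct and essentially matches the paper. The gap is in the treatment of $V^+$.

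You show that $A-(V^+\wedge n)$ is dissipative and hence $T_{V^+\wedge n}$ is contractive, and then invoke \cite[Prop.~2.2]{Vo88} to conclude that $V^+$ is $T$-admissible. But \cite[Prop.~2.2]{Vo88} is a boundedness criterion for \emph{negative} potentials: it says that if $\sup\{\|T_{W\vee(-n)}(t)\| \setcol t\in[0,1],\ n\in\N\}<\infty$ for $W\leqslant0$, then $W$ is $T$-admissible. For a positive potential $V^+$, the strong limit $T_{V^+}(t)=\slim_n T_{V^+\wedge n}(t)$ exists automatically by monotone convergence; the content of admissibility is the strong continuity of $T_{V^+}$ at $t=0$, and a uniform norm bound on the approximants does not deliver this. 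Indeed, your dissipativity computation for $A-(V^+\wedge n)$ only uses $\Rea\langle -Au,u\rangle\geqslant0$, not the full inequality~\eqref{form-small}; if that were sufficient, then every non-negative measurable $V^+$ would be admissible for every positive contraction semigroup, which is false.

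The paper closes this gap by running the dissipativity argument for the \emph{opposite} sign: one shows that $A+(V^+\wedge n)$ is dissipative, i.e.\ $\Rea\langle(A+(V^+\wedge n))u,u\rangle\leqslant\Rea\langle Au,u\rangle+\int_\Omega|V||u|^2\,d\mu\leqslant0$ by~\eqref{form-small}, so that $T_{-(V^+\wedge n)}$ is contractive. Now \cite[Prop.~2.2]{Vo88} legitimately applies and gives that $-V^+$ is $T$-admissible; then \cite[Prop.~3.3(b)]{Vo88} yields that $V^+$ is $T$-admissible (in fact $T$-regular). This step genuinely uses the full strength of the hypothesis~\eqref{form-small}, which is consistent with your own closing remark that the hypothesis should enter precisely where the truncated semigroups increase.
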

\begin{proof}
For every $n \in \N$, \eqref{form-small} implies that
\[
\Rea \bigl\langle \bigl(A + (V^\pm \wedge n)\bigr)u,u \bigr\rangle \leqslant \Rea \langle Au,u \rangle + \int_{\Omega} |V| |u|^2 \,d\mu \leqslant 0 \qquad (u \in \dom(A)),
\]
so $A + (V^\pm \wedge n)$ is dissipative. It follows that $T_{- (V^\pm \wedge n)}$ is contractive for all $n \in \N$,
so \cite[Prop.~2.2]{Vo88} shows that $-V^\pm$ is $T$-admissible.
By \cite[Prop.~3.3(b)]{Vo88} it follows that $V^+$ and hence also $V$ is $T$-admissible.
The contractivity of $T_V$ follows from $T_V \leqslant T_{-V^-}$ since $T_{- (V^- \wedge n)}$ is contractive for all $n \in \N$.
\end{proof}

\subsection{Logarithmically convex functions}\label{sec_lc}

Throughout let $(\Omega,\mu)$ be a measure space.
The aim of this subsection is to prove the following interpolation inequality.

\begin{thm}\label{thm-interpol}
Let $p\in(1,\infty)$, let $T$ be a positive $C_0$-semigroup on $L_p(\mu)$, and let $V\colon\Omega\to\R$ be $T$-admissible.
Let $t>0$, $p_0,p_1,q_0,q_1\in[1,\infty]$, and assume that $\|T_{jV}(t)\|_{p_j\to q_j} < \infty$ for $j=0,1$.
Let $\theta\in(0,1)$, and define $p_\theta,q_\theta \in [1,\infty]$ by
\begin{equation}\label{ptheta-def}
  \frac{1}{p_\theta} = \frac{1-\theta}{p_0} + \frac{\theta}{p_1}\,, \qquad
  \frac{1}{q_\theta} = \frac{1-\theta}{q_0} + \frac{\theta}{q_1}\,.
\end{equation}
Then
\[
  \|T_{\theta V}(t)\|_{p_\theta\to q_\theta} \leqslant \|T(t)\|_{p_0\to q_0}^{1-\theta} \|T_V(t)\|_{p_1\to q_1}^\theta\,.
\]
\end{thm}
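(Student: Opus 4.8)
The plan is to prove this by complex interpolation, in the spirit of the Stein interpolation theorem, after a suitable reduction to bounded potentials. The key algebraic fact we need is that the family $z \mapsto T_{zV}(t)$ behaves well for $z$ in the strip $\{0 \leqslant \Rea z \leqslant 1\}$, so that the three-lines lemma applies to $z \mapsto \langle T_{zV}(t) f, g \rangle$ for suitable $f,g$. The two endpoints $z = 0$ and $z = 1$ give, via $p_0 \to q_0$ and $p_1 \to q_1$ boundedness, the bound $\|T(t)\|_{p_0 \to q_0}$ on the line $\Rea z = 0$ and $\|T_V(t)\|_{p_1 \to q_1}$ on the line $\Rea z = 1$, and interpolating at $\Rea z = \theta$ gives the claim with exponents $p_\theta, q_\theta$ determined by \eqref{ptheta-def}.

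First I would reduce to the case of bounded~$V$. Write $V_n := (V \wedge n) \vee (-n)$, so that $T_{\theta V_n}(t) \to T_{\theta V}(t)$ strongly and similarly for $jV$; combined with lower semicontinuity of operator norms under strong convergence, it suffices to prove the inequality for each $V_n$ in place of~$V$, provided we also know the endpoint norms $\|T_{jV_n}(t)\|_{p_j\to q_j}$ are controlled by $\|T_{jV}(t)\|_{p_j\to q_j}$. The latter follows from positivity and domination: since $|T_{\theta V_n}(t) f| \leqslant T_{-\theta|V_n|}(t)|f| \leqslant T_{-\theta|V|}(t)|f|$ and analogously at the endpoints (using $|V_n| \leqslant |V|$ and the monotonicity $T_{W'} \geqslant T_W$ for $W' \leqslant W$ recalled in Subsection~\ref{sec-admis}), all the relevant norms for~$V_n$ are bounded by those for~$V$. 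So from now on assume $V \in L_\infty(\mu)$.

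For bounded~$V$, the operator $A - zV$ (with $A$ the generator of~$T$) generates a $C_0$-semigroup $T_{zV}$ that depends analytically on $z \in \C$, with $T_{zV}(t)$ bounded on every $L_p(\mu)$; moreover $\|T_{zV}(t) f\| = \|T_{\Rea z \cdot V + i\Ima z\, V}(t)f\|$ and, since the imaginary part of the potential only contributes a modulus-one factor in the domination estimate, $|T_{zV}(t)f| \leqslant T_{(\Rea z) V}(t)|f|$ pointwise. Hence on the line $\Rea z = \sigma \in \{0,1\}$ we get $\|T_{zV}(t)\|_{p_\sigma \to q_\sigma} \leqslant \|T_{\sigma V}(t)\|_{p_\sigma \to q_\sigma}$, which are finite by hypothesis (for $\sigma = 0$ this is $\|T(t)\|_{p_0\to q_0}$). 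Then for $f$ a simple function with $\|f\|_{p_\theta} \leqslant 1$ and $g$ simple with $\|g\|_{q_\theta'} \leqslant 1$, form the Stein-type analytic family by composing $T_{zV}(t)$ with the standard interpolation substitutions $f_z := |f|^{p_\theta(1/p_0 \cdot (1-z) + 1/p_1 \cdot z)} \sgn f$ and $g_z := |g|^{q_\theta'(\dots)} \sgn g$; the function $F(z) := \langle T_{zV}(t) f_z, g_z\rangle$ is bounded and continuous on the closed strip, analytic in the interior, and satisfies $|F(i s)| \leqslant \|T(t)\|_{p_0\to q_0}$ and $|F(1+is)| \leqslant \|T_V(t)\|_{p_1\to q_1}$ for all $s \in \R$. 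The three-lines lemma then yields $|F(\theta)| \leqslant \|T(t)\|_{p_0\to q_0}^{1-\theta}\|T_V(t)\|_{p_1\to q_1}^{\theta}$, and since $F(\theta) = \langle T_{\theta V}(t) f, g\rangle$, taking the supremum over all such $f, g$ gives the desired norm bound (density of simple functions handles the general $f$, with the usual care when $p_\theta = \infty$ or $q_\theta = 1$, which can be treated by a separate direct domination argument).

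The main obstacle I anticipate is the justification that $F$ is genuinely bounded on the whole strip (not just on the two boundary lines) — the three-lines lemma requires this — because $\|T_{zV}(t)\|$ could a priori blow up as $\Ima z \to \pm\infty$ for intermediate $\Rea z$. This is handled by the pointwise domination $|T_{zV}(t)h| \leqslant T_{(\Rea z)V}(t)|h| \leqslant (T_{-|V|}(t) \vee T_{|V|}(t))|h|$ together with the interpolation bound on the moduli $|f_z|, |g_z|$, which are dominated uniformly in $\Ima z$ by a fixed pair of $L_{p_\theta}$, $L_{q_\theta'}$ functions; this gives a crude but sufficient uniform bound $|F(z)| \leqslant C$ on the strip. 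A secondary technical point is that the paper works with real or complex scalars $\K$, so when $\K = \R$ one should either complexify (all the semigroups extend, and real-valuedness of~$V$ is what makes the domination work) or argue directly; I would complexify, noting that the operator norms are unchanged under complexification for positive operators. The reliance on Proposition~\ref{admissibility} and the admissibility machinery of Subsection~\ref{sec-admis} is only needed to make sense of $T_{jV}, T_{\theta V}$ in the unbounded case and to run the truncation reduction cleanly.
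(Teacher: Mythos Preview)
Your reduction to bounded~$V$ has a genuine gap, and it is precisely the obstacle the paper singles out after stating the theorem. You claim the endpoint norms $\|T_{jV_n}(t)\|_{p_j\to q_j}$ are controlled by $\|T_{jV}(t)\|_{p_j\to q_j}$, but the domination you write down, $|T_{V_n}(t)f| \leqslant T_{-|V_n|}(t)|f| \leqslant T_{-|V|}(t)|f|$, only yields $\|T_{V_n}(t)\|_{p_1\to q_1} \leqslant \|T_{-|V|}(t)\|_{p_1\to q_1}$, and the hypothesis says nothing about $\|T_{-|V|}(t)\|_{p_1\to q_1}$. There is in general no pointwise comparison between $T_{V_n}$ and $T_V$, since the two-sided truncation cuts both from above and from below; and strong convergence $T_{V_n}(t)\to T_V(t)$ only gives $\liminf_n \|T_{V_n}(t)\|_{p_1\to q_1} \geqslant \|T_V(t)\|_{p_1\to q_1}$, which is the wrong direction for the right-hand side. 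One-sided truncations do not save the argument either: taking $V\vee(-n)$ gives the desired domination at $j=1$ but leaves $V$ unbounded above, so the analytic dependence $z\mapsto T_{zV}(t)$ needed for Stein interpolation is lost; truncating from above reverses the inequality again.

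The paper avoids this by passing to the limit \emph{before} extracting the norm inequality. For bounded $V_n$ one shows (essentially by your Stein argument, or by the reference cited) that $[0,1]\ni\theta\mapsto T_{\theta V_n}(t)\in\mathcal{L}(L_p(\mu))$ is logarithmically convex in the sense of~\eqref{lc-def}. Because~\eqref{lc-def} is a family of linear inequalities in the ordered space $\mathcal{L}(L_p(\mu))$, it survives strong limits, so $\theta\mapsto T_{\theta V}(t)$ is itself logarithmically convex. The remaining work (Lemma~\ref{composition} and the norm identity at the end of the proof) shows, for any logarithmically convex $S\colon[0,1]\to\mathcal{L}(L_p(\mu))$, that $\theta\mapsto\|S(\theta)\|_{p_\theta\to q_\theta}$ is logarithmically convex; applied at $\theta$ with endpoints $0,1$ this gives the inequality with the correct right-hand side $\|T(t)\|_{p_0\to q_0}^{1-\theta}\|T_V(t)\|_{p_1\to q_1}^\theta$. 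Your bounded-$V$ argument is fine and effectively supplies the first step; what is missing is this limit-stable reformulation of the conclusion.
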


For bounded potentials~$V$ this inequality is well-known and can be proved by means of the Stein interpolation theorem.
The problem is that the inequality does not easily carry over from bounded to admissible potentials since one only has strong convergence $T_{V_n}(t) \to T_V(t)$, which does not imply convergence of the operator norms.
To remedy this problem we introduce the notion of logarithmic convexity  that is motivated by \cite{haa07}.
Our presentation largely follows \cite[Sec.~2.3]{Vog10}.

Let $X$ be an ordered vector space, i.e., $X$ is a (real or complex) vector space endowed with a proper convex cone $X_+$ of positive elements,
where $X_+$ being proper means that $X_+\cap(-X_+) = \{0\}$.
Let $I\subseteq\R$ be an interval.
We say that a function $f \colon I\to X$ is \emph{logarithmically convex} if
\begin{equation}\label{lc-def}
  f((1 - \theta)t_0 + \theta t_1)  \leqslant (1-\theta) r^{-\theta} f(t_0) + \theta r^{1-\theta} f(t_1) \qquad (r>0)
\end{equation}
for all $t_0,t_1\in I$ and all $\theta\in(0,1)$.
By choosing $t_0=t_1$ and $r\ne1$ (so that $(1-\theta) r^{-\theta} + \theta r^{1-\theta} > 1$)
we see that a logarithmically convex function $f$ takes its values in $X_+$.

The next lemma implies in particular that $f \colon I\to\R$ is logarithmically convex if and only if
$f\geqslant 0$ and $\ln\circ f$ is convex, where we use the convention $\ln 0 := -\infty$.
Thus, the assertion of Theorem~\ref{thm-interpol} is that $[0,1] \ni \theta \mapsto \|T_{\theta V}(t)\|_{p_\theta\to q_\theta} \in \R$
is logarithmically convex.

\begin{lem}\label{Mmu}
Let $M(\mu)$ be the ordered vector space of all scalar-valued measurable
functions on $\Omega$, where functions are identified if they coincide a.e.,
and $M(\mu)_+ = \{f\in M(\mu) \setcol f\geqslant 0\ \text{a.e.}\}$.

\rma A function $f \colon I\to M(\mu)_+$ is logarithmically convex if and only if
\[
  f((1 - \theta)t_0 + \theta t_1) \leqslant f(t_0)^{1-\theta} f(t_1)^\theta
\]
a.e.\ for all\/ $t_0,t_1\in I$ and all\/ $\theta\in(0,1)$.

\rmb If $f,g \colon I\to M(\mu)_+$ are logarithmically convex, then $t \mapsto f(t)g(t)$ is
logarithmically convex as well.
\end{lem}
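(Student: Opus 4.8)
The plan is to verify the defining inequality~\eqref{lc-def} directly in the ordered vector space $M(\mu)$, using the pointwise characterization from part~(a) as the bridge between the two formulations.

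For part~(a), I would argue as follows. Suppose first that $f((1-\theta)t_0+\theta t_1) \leqslant f(t_0)^{1-\theta}f(t_1)^\theta$ a.e.\ for all $t_0,t_1 \in I$ and $\theta \in (0,1)$. Fix such $t_0,t_1,\theta$ and $r>0$. At a.e.\ point of $\Omega$ we then have, writing $a = f(t_0)$, $b = f(t_1)$,
\[
  f((1-\theta)t_0+\theta t_1) \leqslant a^{1-\theta}b^\theta = (r^{-1}a)^{1-\theta}(rb)^\theta \leqslant (1-\theta)r^{-1}a + \theta r b,
\]
where the last step is the weighted arithmetic–geometric mean inequality (valid also when $a$ or $b$ is~$0$). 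Hmm — but this gives $(1-\theta)r^{-1}f(t_0) + \theta r f(t_1)$, not the asymmetric weights $(1-\theta)r^{-\theta}f(t_0) + \theta r^{1-\theta}f(t_1)$ that appear in~\eqref{lc-def}. The two are reconciled by replacing $r$ with $r^{?}$; indeed, applying the AM–GM step as $a^{1-\theta}b^\theta = (s^{-\theta}a)^{1-\theta}(s^{1-\theta}b)^\theta \leqslant (1-\theta)s^{-\theta}a + \theta s^{1-\theta}b$ with $s := r$ yields exactly the form in~\eqref{lc-def}. So part~(a) follows by choosing the free parameter in AM–GM appropriately in each direction. Conversely, if $f$ is logarithmically convex, fix $t_0,t_1,\theta$; for each $r>0$ we have $f((1-\theta)t_0+\theta t_1) \leqslant (1-\theta)r^{-\theta}f(t_0) + \theta r^{1-\theta}f(t_1)$ a.e., and then at a.e.\ point the right-hand side is minimized over $r>0$ at the value making the two summands proportional, giving exactly $f(t_0)^{1-\theta}f(t_1)^\theta$ (with the usual care when one of the values vanishes, where the bound $\leqslant 0$ forces the relevant side to be~$0$). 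The only subtlety is that ``for every $r$, a.e.'' must be upgraded to ``a.e., for every $r$'': one fixes a countable dense set of $r$'s, takes the intersection of the corresponding conull sets, and uses continuity of $r \mapsto (1-\theta)r^{-\theta}f(t_0)(\omega) + \theta r^{1-\theta}f(t_1)(\omega)$ to pass to the infimum over all $r>0$; this is the one genuinely measure-theoretic point.

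For part~(b), the cleanest route uses the pointwise characterization from~(a): if $f,g$ are logarithmically convex, then a.e.
\[
  f((1-\theta)t_0+\theta t_1)\,g((1-\theta)t_0+\theta t_1) \leqslant f(t_0)^{1-\theta}f(t_1)^\theta g(t_0)^{1-\theta}g(t_1)^\theta = (fg)(t_0)^{1-\theta}(fg)(t_1)^\theta,
\]
using that multiplication preserves the order on $M(\mu)_+$ and that $x \mapsto x^{1-\theta}$, $x \mapsto x^\theta$ are multiplicative. Applying~(a) in the reverse direction shows $fg$ is logarithmically convex.

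I expect the main obstacle to be purely bookkeeping around the almost-everywhere quantifiers in part~(a): making precise the exchange of ``$\forall r$, a.e.'' with ``a.e., $\forall r$'' via a countable dense set and a continuity/monotonicity argument, and handling the boundary cases where $f(t_0)$ or $f(t_1)$ vanishes on a set of positive measure. Everything else reduces to the two-variable weighted AM–GM inequality and elementary properties of the cone $M(\mu)_+$.
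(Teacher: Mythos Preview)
Your proposal is correct and follows essentially the same route as the paper: both directions of (a) rest on the weighted AM--GM (Young) identity $a^{1-\theta}b^\theta = (r^{-\theta}a)^{1-\theta}(r^{1-\theta}b)^\theta \leqslant (1-\theta)r^{-\theta}a + \theta r^{1-\theta}b$ together with the observation that the infimum over a countable set of $r>0$ (the paper takes $r\in\Q$) recovers $a^{1-\theta}b^\theta$, which is exactly your ``countable dense set'' argument for swapping the quantifiers; and (b) is deduced from (a) just as you do.
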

\begin{proof}
(a) follows from Young's inequality: For $a,b\geqslant 0$ and $\theta\in(0,1)$ we have
\[
  a^{1-\theta} b^\theta = \bigl(r^{-\theta}a\bigr)^{1-\theta} \bigl(r^{1-\theta}b\bigr)^\theta \leqslant (1-\theta) r^{-\theta} a + \theta r^{1-\theta} b
  \qquad (r>0)
\]
and
\[
  a^{1-\theta} b^\theta = \inf \bigl\{(1-\theta) r^{-\theta} a + \theta r^{1-\theta} b \setcol 0<r\in\Q\bigr\}.
\]

(b) is an immediate consequence of part (a).
\end{proof}

The next result, though being elementary, is the basis of the proof of Theorem~\ref{thm-interpol}.
We assume that $L_p(\mu)$ and $\mathcal{L}(L_p(\mu))$ are endowed with their natural orderings;
in particular, $\mathcal{L}(L_p(\mu))_+$ consists of the positive operators on $L_p(\mu)$. 
Let $\langle \cdot,\cdot \rangle_{p,p'}$ denote the natural bilinear map associated with the the dual pairing $\langle L_p(\mu),L_{p'}(\mu) \rangle$.

\begin{prop}\label{lc-prop}
Let $p\in[1,\infty)$, $f \colon I\to L_p(\mu)$ and $S \colon I\to\mathcal{L}(L_p(\mu))$. Then

\rma $f$ is logarithmically convex if and only if $t \mapsto \langle f(t),g\rangle_{p,p'}$ is logarithmically convex for all $g\in L_{p'}(\mu)_+$;

\rmb $S$ is logarithmically convex if and only if $t \mapsto S(t)h$ is logarithmically convex for all $h\in L_p(\mu)_+$.
\end{prop}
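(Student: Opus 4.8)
The plan is to prove both equivalences by reducing logarithmic convexity of the vector- or operator-valued object to logarithmic convexity of a family of real-valued functions, using the duality and positivity structure of $L_p(\mu)$ and the characterization of logarithmic convexity provided by Lemma~\ref{Mmu}(a). The key point is that the order on $L_p(\mu)$ is recovered by testing against $L_{p'}(\mu)_+$, and the order on $\mathcal{L}(L_p(\mu))$ is recovered by testing against $L_p(\mu)_+$, so that the defining inequality~\eqref{lc-def} for the $X$-valued function is equivalent to the same inequality for each scalar function obtained by pairing.

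For part (a), the ``only if'' direction is immediate: if $f$ satisfies~\eqref{lc-def} in $L_p(\mu)$, then applying the positive functional $\langle\,\cdot\,,g\rangle_{p,p'}$ with $g\in L_{p'}(\mu)_+$ preserves the inequality (positivity of $g$ turns the order relation into an inequality of real numbers), so $t\mapsto\langle f(t),g\rangle_{p,p'}$ satisfies~\eqref{lc-def} in $\R$ and is therefore logarithmically convex. For the ``if'' direction, suppose $t\mapsto\langle f(t),g\rangle_{p,p'}$ is logarithmically convex for every $g\in L_{p'}(\mu)_+$. Fix $t_0,t_1\in I$, $\theta\in(0,1)$, $r>0$, and set $h:=f((1-\theta)t_0+\theta t_1)$ and $k:=(1-\theta)r^{-\theta}f(t_0)+\theta r^{1-\theta}f(t_1)$; we must show $h\leqslant k$ in $L_p(\mu)$, i.e.\ $h-k\leqslant 0$ a.e. By hypothesis, $\langle h,g\rangle_{p,p'}\leqslant\langle k,g\rangle_{p,p'}$, hence $\langle h-k,g\rangle_{p,p'}\leqslant 0$, for all $g\in L_{p'}(\mu)_+$; taking $g=\ind_{[h-k>0]}\cdot|h-k|^{p-1}$ (or, for $p=1$, $g=\ind_{[h-k>0]}$) forces $h-k\leqslant 0$ a.e. Thus $f$ is logarithmically convex.

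For part (b), note that $S$ being logarithmically convex as an $\mathcal{L}(L_p(\mu))_+$-valued function means~\eqref{lc-def} holds with the operator order, and the operator order is by definition the assertion that $S(t)h$ lies in the appropriate order relation in $L_p(\mu)$ for every $h\in L_p(\mu)_+$. So the ``only if'' direction is again immediate from applying $S$ to a fixed positive $h$ and using linearity of each $S(t)$ to distribute over the convex combination on the right of~\eqref{lc-def}. For the ``if'' direction, if $t\mapsto S(t)h$ is logarithmically convex in $L_p(\mu)$ for every $h\in L_p(\mu)_+$, then for fixed $t_0,t_1,\theta,r$ and every such $h$ one has, in $L_p(\mu)$,
\[
S\bigl((1-\theta)t_0+\theta t_1\bigr)h \leqslant (1-\theta)r^{-\theta}S(t_0)h + \theta r^{1-\theta}S(t_1)h = \bigl((1-\theta)r^{-\theta}S(t_0)+\theta r^{1-\theta}S(t_1)\bigr)h,
\]
which is exactly the statement that $S((1-\theta)t_0+\theta t_1)\leqslant(1-\theta)r^{-\theta}S(t_0)+\theta r^{1-\theta}S(t_1)$ in $\mathcal{L}(L_p(\mu))$, as required. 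One should also remark that testing against positive $h$ suffices because every element of $L_p(\mu)$ is a difference of two positive elements, so positivity of an operator on the positive cone already forces it on all of $L_p(\mu)$.

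The only genuine subtlety — and the step I would flag as the main obstacle, though it is minor — is the ``if'' direction of part (a): one needs a concrete positive test functional $g\in L_{p'}(\mu)_+$ that detects the positive part of $h-k$, and the exponent bookkeeping ($g=\ind_{[h-k>0]}|h-k|^{p-1}$ lies in $L_{p'}(\mu)$ precisely because $h,k\in L_p(\mu)$, with the case $p=1$, $p'=\infty$ handled separately by the bounded function $\ind_{[h-k>0]}$) must be done carefully. Everything else is a routine consequence of linearity, positivity of the cone, and Lemma~\ref{Mmu}(a) identifying the pointwise-a.e.\ formulation with the $r$-parametrized one.
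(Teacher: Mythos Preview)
Your proof is correct and takes essentially the same approach as the paper: both reduce the question to the duality fact that $h\in L_p(\mu)_+$ if and only if $\langle h,g\rangle_{p,p'}\geqslant 0$ for all $g\in L_{p'}(\mu)_+$, with the paper simply citing this fact while you spell out the explicit test function $g=\ind_{[h-k>0]}|h-k|^{p-1}$. Your closing reference to Lemma~\ref{Mmu}(a) is a bit of a red herring, since your argument works directly with the defining inequality~\eqref{lc-def} and never actually invokes the pointwise characterization.
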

\begin{proof}
(a) is immediate from the following fact: A function $h\in L_p(\mu)$ is in $L_p(\mu)_+$
if and only if $\langle h,g\rangle_{p,p'} \geqslant 0$ for all $g\in L_{p'}(\mu)_+$.

(b) is clear.
\end{proof}

Clearly, if $f \colon I\to X$ is logarithmically convex and $B \colon X\to X$ is
a positive operator, then $B\circ f$ is logarithmically convex.
In the case $X=L_p(\mu)$ we can prove the following more general result.

\begin{lem}\label{composition}
Let $p\in(1,\infty)$, and let $f \colon I\to L_p(\mu)$, $g \colon I\to L_{p'}(\mu)$ and $S \colon I\to\mathcal{L}(L_p(\mu))$
be logarithmically convex. Then

\rma $I \ni t \mapsto \langle f(t),g(t)\rangle_{p,p'} \in \R$ is logarithmically convex.

\rmb $I \ni t \mapsto S(t)f(t) \in L_p(\mu)$ is logarithmically convex.

\rmc $I \ni t \mapsto \langle S(t)f(t),g(t)\rangle_{p,p'} \in \R$ is logarithmically convex.
\end{lem}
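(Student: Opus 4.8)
The plan is to reduce everything to the scalar case via the pairing characterizations already available. For part~(a), I would fix $t_0,t_1\in I$ and $\theta\in(0,1)$ and use that $f$ and~$g$ are logarithmically convex, hence by Proposition~\ref{lc-prop}(a) (applied to~$f$) and its analogue for $L_{p'}$-valued functions, the scalar functions $t\mapsto\langle f(t),h\rangle_{p,p'}$ and $t\mapsto\langle h',g(t)\rangle_{p,p'}$ are logarithmically convex for all positive $h\in L_{p'}(\mu)$, $h'\in L_p(\mu)$. But this is not quite enough directly, since I need logarithmic convexity of the \emph{product} pairing $\langle f(t),g(t)\rangle$. The clean route is pointwise: since $f(t)\geqslant 0$ and $g(t)\geqslant 0$ a.e., write $\langle f(t),g(t)\rangle_{p,p'} = \int_\Omega f(t)g(t)\,d\mu$, and observe that $t\mapsto f(t)g(t)$ is a logarithmically convex $M(\mu)_+$-valued function by Lemma~\ref{Mmu}(b) (here I use that $f,g$ as $L_p$- resp.\ $L_{p'}$-valued logarithmically convex functions are in particular $M(\mu)_+$-valued logarithmically convex, because the defining inequality~\eqref{lc-def} holds in the a.e.\ order). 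Then, by Lemma~\ref{Mmu}(a), $f((1-\theta)t_0+\theta t_1)g((1-\theta)t_0+\theta t_1) \leqslant \bigl(f(t_0)g(t_0)\bigr)^{1-\theta}\bigl(f(t_1)g(t_1)\bigr)^\theta$ a.e.; integrating and applying Hölder's inequality with exponents $1/(1-\theta)$ and $1/\theta$ yields
\[
\int_\Omega f(t_\theta)g(t_\theta)\,d\mu \leqslant \Bigl(\int_\Omega f(t_0)g(t_0)\,d\mu\Bigr)^{1-\theta}\Bigl(\int_\Omega f(t_1)g(t_1)\,d\mu\Bigr)^\theta,
\]
where $t_\theta := (1-\theta)t_0+\theta t_1$, which is exactly the logarithmic convexity of $t\mapsto\langle f(t),g(t)\rangle_{p,p'}$ in the form of Lemma~\ref{Mmu}(a), hence in the sense of~\eqref{lc-def}.

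For part~(b), the strategy is to test against positive functionals and reduce to~(a). Fix $g\in L_{p'}(\mu)_+$; by Proposition~\ref{lc-prop}(a) it suffices to show $t\mapsto\langle S(t)f(t),g\rangle_{p,p'}$ is logarithmically convex. Since $S$ is logarithmically convex as an $\mathcal{L}(L_p(\mu))$-valued function, by Proposition~\ref{lc-prop}(b) the map $t\mapsto S(t)h$ is logarithmically convex in $L_p(\mu)$ for each fixed positive~$h$; dually, I would like $t\mapsto S(t)^*g$ to be logarithmically convex in $L_{p'}(\mu)$. This holds because $S(t)^*$ is a positive operator on $L_{p'}(\mu)$ and, applying~\eqref{lc-def} for $S$ and pairing with a positive $h\in L_p(\mu)$, we get $\langle h, S(t_\theta)^*g\rangle = \langle S(t_\theta)h,g\rangle \leqslant (1-\theta)r^{-\theta}\langle S(t_0)h,g\rangle + \theta r^{1-\theta}\langle S(t_1)h,g\rangle = \langle h,(1-\theta)r^{-\theta}S(t_0)^*g + \theta r^{1-\theta}S(t_1)^*g\rangle$ for all $r>0$, so by the order characterization in the proof of Proposition~\ref{lc-prop}(a), $S(t_\theta)^*g \leqslant (1-\theta)r^{-\theta}S(t_0)^*g + \theta r^{1-\theta}S(t_1)^*g$, i.e.\ $t\mapsto S(t)^*g$ is logarithmically convex. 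Now apply part~(a) to the $L_p$-valued function $f$ and the $L_{p'}$-valued function $t\mapsto S(t)^*g$: the scalar function $t\mapsto\langle f(t),S(t)^*g\rangle_{p,p'} = \langle S(t)f(t),g\rangle_{p,p'}$ is logarithmically convex. Since $g\in L_{p'}(\mu)_+$ was arbitrary, Proposition~\ref{lc-prop}(a) gives that $t\mapsto S(t)f(t)$ is logarithmically convex in $L_p(\mu)$, which is~(b).

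Part~(c) is then immediate by combining (b) and (a): $t\mapsto S(t)f(t)$ is logarithmically convex in $L_p(\mu)$ by~(b), and $t\mapsto g(t)$ is logarithmically convex in $L_{p'}(\mu)$ by hypothesis, so part~(a) applied to these two functions yields that $t\mapsto\langle S(t)f(t),g(t)\rangle_{p,p'}$ is logarithmically convex.

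I expect the main obstacle to be part~(a), specifically the passage from the a.e.\ pointwise inequality for $f(t_\theta)g(t_\theta)$ to the integrated inequality — this is precisely the Hölder step, and the only subtlety is that it requires $p\in(1,\infty)$ (so that $p'<\infty$ and the dual pairing is genuinely $L_p$–$L_{p'}$ integration, and so that Hölder applies with finite exponents), which is why the hypothesis $p\in(1,\infty)$ appears. A secondary point to handle carefully is the identification, used throughout, that a logarithmically convex $L_p(\mu)$-valued (or $L_{p'}(\mu)$-valued) function is also logarithmically convex as an $M(\mu)_+$-valued function; this follows because the order on $L_p(\mu)$ is inherited from the a.e.\ order on $M(\mu)$, and the defining inequality~\eqref{lc-def} is an order inequality, so no additional argument is needed. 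The dual statement ``$S$ logarithmically convex $\Rightarrow$ $t\mapsto S(t)^*g$ logarithmically convex for $g\geqslant 0$'' in part~(b) is the one genuinely new small lemma; I would either include it inline as above or state it as a one-line remark before the proof.
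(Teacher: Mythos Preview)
Your proof is correct and follows essentially the same architecture as the paper's: use Lemma~\ref{Mmu}(b) for part~(a), pass to the adjoint $S(t)'$ and invoke~(a) for part~(b), and combine the two for~(c). Parts~(b) and~(c) match the paper almost verbatim.

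The one place you take a slightly longer route is part~(a). You go through the multiplicative characterization of Lemma~\ref{Mmu}(a), obtain the a.e.\ inequality $f(t_\theta)g(t_\theta)\leqslant (f(t_0)g(t_0))^{1-\theta}(f(t_1)g(t_1))^\theta$, integrate, and then invoke H\"older to recover the multiplicative inequality for the integrals. The paper instead stays with the \emph{linear} defining inequality~\eqref{lc-def}: since $t\mapsto f(t)g(t)$ is logarithmically convex in $M(\mu)_+$ (Lemma~\ref{Mmu}(b)) and integration $L_1(\mu)\ni h\mapsto\int h\,d\mu$ is a positive linear map, it preserves~\eqref{lc-def} directly, so no H\"older step is needed. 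Your argument is correct but the paper's is one line shorter.

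A small side remark: your claim that the H\"older step ``requires $p\in(1,\infty)$'' is not quite right---the H\"older exponents you use are $1/(1-\theta)$ and $1/\theta$, which depend only on $\theta$, not on~$p$. The hypothesis $p\in(1,\infty)$ is there so that $p'\in(1,\infty)$ and Proposition~\ref{lc-prop} applies in both $L_p$ and $L_{p'}$ for the duality argument in~(b).
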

\begin{proof}
(a) follows from Lemma~\ref{Mmu}(b) and the positivity of the linear operator $L_1(\mu)\ni f\mapsto \int f\,d\mu \in \R$.

(b) It follows from Proposition~\ref{lc-prop} that $I\ni t\mapsto S(t)' \in \mathcal{L}(L_{p'}(\mu))$ is logarithmically convex.
Therefore, by part~(a), $I\ni t\mapsto \langle S(t)f(t),h\rangle_{p,p'} = \langle f(t),S(t)'h\rangle_{p,p'}  \in \R$ is logarithmically convex,
for all $h\in L_{p'}(\mu)_+$, and the assertion follows by Proposition~\ref{lc-prop}(a).

(c) follows immediately from parts (a) and~(b).
\end{proof}

Now we are ready to prove Theorem~\ref{thm-interpol}.

\begin{proof}[Proof of Theorem~\ref{thm-interpol}]
Fix $t>0$. For $n\in\N$ let $V_n := (V\wedge n)\vee(-n)$.
It follows from \cite[top of p.~121]{Vo88} and Proposition~\ref{lc-prop} that
$[0,1]\ni\theta \mapsto T_{\theta V_n}(t) \in \mathcal{L}(L_p(\mu))$ is logarithmically convex.
Therefore, $[0,1]\ni\theta \mapsto T_{\theta V}(t) = \slim_{n\to\infty} T_{\theta V_n}(t) =: S(\theta) \in \mathcal{L}(L_p(\mu))$
is logarithmically convex.
(Note that $\theta V$ is $T$-admissible for all $\theta\in[0,1]$, by~\cite[Prop.~2.3]{Vo88}.)

To prove the theorem we now show that $[0,1]\ni\theta \mapsto \|S(\theta)\|_{p_\theta\to q_\theta} \in \R$ is logarithmically convex
for any logarithmically convex function $S\colon[0,1]\to \mathcal{L}(L_p(\mu))$ with $\|S(j)\|_{p_j \rightarrow q_j} < \infty$ for $j= 0,1$.
Let $\phi,\psi\in S(\mu)_+$, where $S(\mu)$ denotes the vector space of simple functions on $(\Omega,\mu)$.
Define $f \colon[0,1]\to L_p(\mu)$ by $f(\theta) := \phi^{1/p_\theta} \ind_{[\phi\ne0]}$
and $g \colon[0,1]\to L_{p'}(\mu)$ by $g(\theta) := \psi^{1/q_\theta'} \ind_{[\psi\ne0]}$.
(The indicator functions are only needed for the cases $p_\theta=\infty$, $q_\theta'=\infty$, respectively.)
Then $f,g$ are logarithmically convex, so Lemma~\ref{composition}(c) implies that
\[
  [0,1] \ni \theta \mapsto \langle S(\theta)f(\theta), g(\theta)\rangle_{p,p'} \in \R
\]
is logarithmically convex. It easily follows that
\[
  [0,1] \ni \theta \mapsto \sup_{\phi,\psi\in M}
  \langle S(\theta) \phi^{1/p_\theta} \ind_{[\phi\ne0]}, \psi^{1/q_\theta'} \ind_{[\psi\ne0]} \rangle_{p,p'} \in \R
\]
is logarithmically convex as well, where $M := \{\phi\in S(\mu)_+ \setcol \|\phi\|_1\leqslant 1 \}$.

To complete the proof, it remains to observe that for $q,r\in[1,\infty]$
and a positive operator $B\in\mathcal{L}(L_p(\mu))$ one has
\[
  \|B\|_{q\to r} = \sup_{\phi,\psi\in M}
  \langle B \phi^{1/q} \ind_{[\phi\ne0]}, \psi^{1/r'} \ind_{[\psi\ne0]} \rangle_{p,p'}.
\]
To see this identity, note that for $q<\infty$ the set
$\{\phi^{1/q} \ind_{[\phi\ne0]} \setcol \phi\in M\} = \{h\in S(\mu)_+ \setcol \|h\|_q\leqslant 1 \}$ is dense in $\{h\in L_p(\mu)_+ \setcol \|h\|_q\leqslant 1 \}$.
If $q=\infty$ then $\{\phi^{1/\infty} \ind_{[\phi\ne0]} \setcol \phi\in M\}
= \{\ind_A \setcol A\ \text{measurable},\ \mu(A)<\infty \}$,
and for all $h\in L_p(\mu)_+$ with $\|h\|_\infty\leqslant 1$ there exists an increasing sequence $(A_n)$ of measurable sets of finite measure such that $h\leqslant\lim_{n\to\infty}\ind_{A_n}$.
Moreover
\[
  \|Bf\|_r = \sup_{\psi\in M} \langle Bf, \psi^{1/r'} \ind_{[\psi\ne0]} \rangle_{p,p'}
  \qquad (f\in L_p(\mu)_+),
\]
where in the case $r=\infty$ we use the fact that the set $[Bf\ne0]$ is $\sigma$-finite; note that for $r=1$ the above equality reads $\|Bf\|_1 = \sup\{ \int_A Bf\,d\mu \setcol A\ \text{measurable},\ \mu(A)<\infty \}$.
\end{proof}

\subsection{Weighted ultracontractivity estimates for perturbed $C_0$-semi\-groups on $L_2$}\label{sec-ultracon-bounds}

In this subsection let $\K = \C$, and let $(\Omega,\mu)$ be a measure space. The main result is the next theorem; we state and prove it in greater generality than actually needed for showing Theorem~\ref{thm-main}.

Note that, for a positive $C_0$-semigroup $T$ on $L_2(\mu)$ and measurable $V\colon\Omega \to \R$, the potential $V$ is $T$-admissible if $pV$ is $T$-admissible for some $p>1$, by~\cite[Prop.~2.3]{Vo88}.

\begin{thm}\label{weighted-ultracon}
Let $\rho\colon\Omega\to(0,\infty)$ be measurable, and let $T$ be a positive self-adjoint $C_0$-semigroup on $L_2(\mu)$ satisfying
\[
  \|\rho^{-\alpha}T(t)\rho^\alpha\|_{1\to\infty} \leqslant ct^{-\nu}e^{\alpha^2t}, \quad \|\rho^{-\alpha}T(t)\rho^\alpha\|_{2 \to 2} \leqslant e^{\alpha^2t}
  \qquad (t>0,\ \alpha\in\R)
\]
for some $c,\nu>0$. Let $V\colon \Omega \to \R$ be measurable, and assume that
there exist $p>1$, $M\geqslant 1$ and $r\geqslant1$ such that $pV$ is $T$-admissible,
$\|T_{pV}(t)\|_{2\to2} \leqslant 1$,
\[
  \|\rho^{-\alpha}T_V(t)\rho^\alpha\|_{1\to1} \leqslant Me^{\alpha^2t}, \quad
  \|\rho^{-\alpha}T_V(t)\rho^\alpha\|_{\infty\to\infty} \leqslant Me^{r\alpha^2t}
  \qquad (t>0,\ \alpha>0).
\]
Then there exists $\tilde c>0$ such that
\[
  \|\rho^{-\alpha}T_V(t)\rho^\alpha\|_{1\to\infty}
  \leqslant \tilde ct^{-\nu} \bigl(1+(r-1)\alpha^2t\bigr)^{\nu/2p} e^{\alpha^2t}
  \qquad (t>0,\ \alpha\in\R).
\]
\end{thm}

We first show an extrapolation result that extends \cite[Lemme~1]{cou90}, for semigroups that are not necessarily self-adjoint; in the proof of Theorem~\ref{weighted-ultracon} it will be applied to the semigroup $t \mapsto e^{-\alpha^2t}\rho^{-\alpha}T_V (t)\rho^{\alpha}$.

\begin{prop}\label{coulhon-extrapol}
Let $T$ be a one-parameter semigroup on $L_2(\mu)$.
Assume that there exist $1\leqslant p<q\leqslant \infty$, $c,\nu>0$, $M\geqslant 1$ and $\omega\geqslant 0$ such that
\[
\|T(t)\|_{1\to1} \leqslant M, \quad
\|T(t)\|_{\infty\to\infty} \leqslant Me^{\omega t}, \quad
\|T(t)\|_{p\to q} \leqslant ct^{-\nu(\pq)} \qquad (t>0).
\]
Then $T$ is ultracontractive,
\[
  \|T(t)\|_{1\to\infty} \leqslant \tilde ct^{-\nu} (1+\omega t)^{\nu/q} \qquad (t>0),
\]
with a constant $\tilde c>0$ depending only on $p,q,c,\nu$ and~$M$.
\end{prop}
\begin{proof}
(i) In the first step we show that
\begin{equation}\label{1q-est}
  \|T(t)\|_{1\to q} \leqslant c_0 t^{-\nu/q'} \qquad (t>0),
\end{equation}
with $c_0 = 2^{\alpha\nu/q'} c^\alpha M$, $\alpha = \frac{1}{q'}\frac{1}{\pq}$.
(Actually this is the first step in the proof of the extrapolation result \cite[Lemme~1]{cou90},
except that there no explicit constant $c_0$ is given.
Since it will be crucial for us that $c_0$ depends only on $p,q,c,\nu$ and~$M$,
we present the extrapolation argument in full detail.
Moreover, unlike \cite{cou90} we do not assume the measure~$\mu$ to be $\sigma$-finite.)

Fix $f\in L_1\cap L_\infty(\mu)$, $\|f\|_1 \leqslant 1$ and $t_0>0$.
Since by assumption
\[
  \|T(t)f\|_q \leqslant ct^{-\nu(\pq)}\|f\|_p \leqslant ct_0^{\nu/p'}\|f\|_p t^{-\nu/q'} \qquad (0<t\leqslant t_0),
\]
there exists $\tilde c_0\geqslant0$ (depending on $f$ and $t_0$!)\ such that
\[
  \|T(t)f\|_q \leqslant \tilde c_0 t^{-\nu/q'} \qquad (0<t\leqslant t_0);
\]
we choose the minimal constant $\tilde c_0$ making this estimate valid.
Let $\theta\in[0,1]$ be such that $\frac{1-\theta}{1} + \frac{\theta}{q} = \frac{1}{p}$,
i.e. $\frac{1}{p'} = \frac{\theta}{q'}$. Then by H\"older's inequality
\[
  \|T(t/2)f\|_p \leqslant \|T(t/2)f\|_1^{1-\theta} \|T(t/2)f\|_q^\theta
                \leqslant M^{1-\theta} \tilde c_0^{\mkern1mu\theta}(t/2)^{-\theta\nu/q'}
\]
and thus
\begin{align*}
\|T(t)f\|_q
 &\leqslant \|T(t/2)\|_{p\to q} \|T(t/2)f\|_p \\
 &\leqslant c(t/2)^{-\nu(\pq)} \cdot M^{1-\theta} \tilde c_0^{\mkern1mu\theta}(t/2)^{-\nu/p'}
  = c M^{1-\theta} \tilde c_0^{\mkern1mu\theta}\, 2^{\nu/q'} t^{-\nu/q'}
\end{align*}
for all $0<t\leqslant t_0$.
By the choice of $\tilde c_0$ it follows that $\tilde c_0 \leqslant c M^{1-\theta} \tilde c_0^{\mkern1mu\theta}\, 2^{\nu/q'}$
and hence $\tilde c_0^{1-\theta} \leqslant c M^{1-\theta}2^{\nu/q'}$.
Thus \eqref{1q-est} is valid with $c_0 = 2^{\alpha\nu/q'} c^\alpha M$,
$\alpha = \frac{1}{1-\theta} = \frac{1}{q'}\frac{1}{\pq}$ as asserted.

(ii) The one-parameter semigroup $S$ on $L_2(\mu)$ defined by $S(t) := e^{-\omega t}T(t)^*$
satisfies $\|S(t)\|_{1\to1} \leqslant M$ and
$\|S(t)\|_{q'\to p'} \leqslant ct^{-\nu(\pq)}$ for all $t>0$.
Applying step~(i) to $S$ (and noting $\frac1p-\frac1q = \frac{1}{q'}-\frac{1}{p'}$) we obtain
\[
  \|T(t)\|_{p\to\infty} = \|T(t)^*\|_{1\to p'} \leqslant c_1 t^{-\nu/p} e^{\omega t} \qquad (t>0),
\]
with $c_1 = 2^{\beta\nu/p} c^\beta M$, $\beta = \frac{1}{p}\frac{1}{\pq}$.
By the Riesz-Thorin interpolation theorem we infer that
\begin{equation}\label{qinfty-est}
  \|T(t)\|_{q\to\infty}
  \leqslant \|T(t)\|_{p\to\infty}^{p/q} \|T(t)\|_{\infty\to\infty}^{1-p/q}
  \leqslant c_2 t^{-\nu/q} e^{\omega t} \qquad (t>0),
\end{equation}
with $c_2 = c_1^{p/q}M^{1-p/q}$.
Combining \eqref{1q-est} and~\eqref{qinfty-est} we conclude that
\[
\|T(t)\|_{1\to\infty} \leqslant \|T((1-\varepsilon)t)\|_{1\to q} \|T(\varepsilon t)\|_{q\to\infty}
\leqslant c_0 ((1-\varepsilon)t)^{-\nu/q'} \cdot c_2 (\varepsilon t)^{-\nu/q} e^{\omega\varepsilon t}
\]
for all $t>0$, $\varepsilon\in(0,1)$.  For $\varepsilon:= 1/(2+\omega t) \; (\leqslant 1/2)$ we obtain
\[
  \|T(t)\|_{1\to\infty}
  \leqslant c_0 2^{\nu/q'} t^{-\nu} \cdot c_2 (2+\omega t)^{\nu/q} e \qquad (t>0),
\]
and the asserted ultracontractivity estimate follows, with $\tilde c = 2^\nu e c_0 c_2$.
\end{proof}

In the proof of Theorem~\ref{weighted-ultracon} we will use the following result
to show an $L_2$-bound for the operators $\rho^{-\alpha}T_{pV}(t)\rho^\alpha$.

\begin{prop}\label{weighted-ultracon2}
Let $\rho\colon\Omega\to(0,\infty)$ be measurable, and let $T$ be a positive self-adjoint $C_0$-semigroup on $L_2(\mu)$ satisfying
\[
  \|\rho^{-\alpha}T(t)\rho^\alpha\|_{2\to2} \leqslant Me^{\alpha^2t+\omega t}
  \qquad (t\geqslant 0,\ \alpha\in\R)
\]
for some $M \geqslant 1$, $\omega\in\R$. Let $V \colon \Omega \to \R$ be $T$-admissible, and assume that $T_V$ is contractive.
Then
\[
  \|\rho^{-\alpha}T_V(t)\rho^\alpha\|_{2\to2} \leqslant e^{\alpha^2t}
  \qquad (t\geqslant 0,\ \alpha\in\R).
\]
\end{prop}
\begin{proof}
Let $n\in\N$ and $V_n := V\vee(-n)$; then the assumption implies that
\[
  \|\rho^{-\alpha}T_{V_n}(t)\rho^\alpha\|_{2\to2} \leqslant Me^{\alpha^2t+(n+\omega)t}
  \qquad (t\geqslant 0,\ \alpha\in\R).
\]
Moreover, $T_{V_n}$ is contractive because $T_{V_n} \leqslant T_V$, and $T_{V_n}$ is self-adjoint.
Thus $T_{V_n}$ is an analytic semigroup of angle $\frac\pi2$,\, $\|T_{V_n}(z)\|\leqslant 1$ for all $z \in \C_+$.
Now \cite[Prop.~2.3]{vog19} (applied with $\ln\rho$ in place of~$\rho$) yields the estimate $\|\rho^{-\alpha}T_{V_n}(t)\rho^\alpha\|_{2\to2} \leqslant e^{\alpha^2t}$ for all $t\geqslant0$ (independent of~$n\,$!),
and letting $n\to\infty$ we obtain the assertion.
\end{proof}

The next result deals with strong continuity and admissibility for weighted semigroups.

\begin{prop}\label{weighted-scontinuity}
Let $\rho\colon\Omega\to(0,\infty)$ be measurable, and let $T$ be a positive $C_0$-semigroup on $L_2(\mu)$ satisfying
\begin{equation}\label{weighted-assu}
  \|\rho^{-1}T(t)\rho\|_{2\to2} \leqslant Me^{\omega t}
  \qquad (t\geqslant0) 
\end{equation}
for some $M \geqslant 1,\ \omega \in \R$.

Then $\rho^{-1}T(t)\rho$ extends to a bounded operator $T^\rho(t)$ on~$L_2(\mu)$, for all $t\geqslant0$, and the family $(T^\rho(t))_{t\geqslant0}$ thus defined is a $C_0$-semigroup on~$L_2(\mu)$.
Moreover, if $V\colon\Omega\to\R$ is bounded from below and $T$-admissible, then $V$ is $T^\rho$-ad\-mis\-si\-ble and $(T^\rho)_V = (T_V)^\rho$.
\end{prop}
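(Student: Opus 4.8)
The plan is to prove the two assertions separately: first the semigroup property of $(T^\rho(t))_{t\geqslant0}$, then the admissibility statement for $V$.

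For the first part, note that by \eqref{weighted-assu} the operator $\rho^{-1}T(t)\rho$, defined a priori on the dense subspace $\{f\in L_2(\mu) \setcol \rho f\in L_2(\mu)\}$, is bounded, so it has a unique bounded extension $T^\rho(t)$; moreover $\|T^\rho(t)\| \leqslant Me^{\omega t}$. The algebraic semigroup law $T^\rho(s)T^\rho(t) = T^\rho(s+t)$ holds on the dense subspace $\{f \setcol \rho f\in L_2,\ \rho^{-1}T(t)\rho f \text{ has }\rho\cdot(\cdot)\in L_2\}$ — but here I should be a little careful, and instead argue as follows: on $\{f \setcol \rho f\in L_2(\mu)\}$ both $T^\rho(s+t)f = \rho^{-1}T(s)T(t)\rho f$ and $T^\rho(s)T^\rho(t)f = T^\rho(s)\bigl(\rho^{-1}T(t)\rho f\bigr)$ agree, once one knows $\rho^{-1}T(t)\rho f$ lies in the original (unextended) domain; this need not be the case, so the cleanest route is to observe that both $T^\rho(s+t)$ and $T^\rho(s)T^\rho(t)$ are bounded operators that agree on the dense set $\{f \setcol \rho f \in C_c\text{-like core}\}$ — more precisely, it suffices to identify a dense subspace $D$ such that $\rho f\in L_2$ and $\rho^{-1}T(t)\rho f$ again has $\rho$ times it in $L_2$; but since $T^\rho(t)$ is the \emph{extension}, one has for general $f$ with $\rho f\in L_2$ that $T^\rho(s)T^\rho(t)f = \lim_n T^\rho(s)g_n$ where $g_n\to T^\rho(t)f$ with $\rho g_n\in L_2$, and $T^\rho(s)g_n = \rho^{-1}T(s)\rho g_n \to \rho^{-1}T(s)\rho\cdot\rho^{-1}T(t)\rho f$ in a suitable sense — this is the delicate point. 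The robust argument is: $T^\rho(t)$ is the closure of $\rho^{-1}T(t)\rho$, and the map $f\mapsto\rho f$ together with $T(t)$ and $f\mapsto\rho^{-1}f$ shows that $T^\rho(t)$ is similar to $T(t)$ via the (unbounded, densely defined, closed, injective) multiplication operator $M_\rho$; similarity of a bounded operator to a $C_0$-semigroup generator-type object via a closed injective operator with dense range preserves the semigroup law and strong continuity, which is the content I would cite or spell out. Strong continuity at $0$ then follows: for $f$ with $\rho f\in L_2$, $T^\rho(t)f = \rho^{-1}T(t)\rho f \to \rho^{-1}\rho f = f$ as $t\downarrow0$ since $T$ is strongly continuous and multiplication by the \emph{fixed} function $\rho^{-1}$ on the convergent net $T(t)\rho f\to\rho f$ is continuous in measure, upgraded to $L_2$-convergence by the uniform bound $\|T^\rho(t)f\|_2 \leqslant Me^{\omega t}\|f\|_2$ via a standard density-plus-equiboundedness argument; the exponential bound handles the $\sup_{t\in[0,1]}$ uniformity needed to pass from the dense set to all of $L_2(\mu)$.

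For the second part, suppose $V\colon\Omega\to\R$ is bounded below and $T$-admissible. Writing $W := V\wedge n$ (for $n\in\N$) — or, since $V$ is bounded below, directly $V = V^+ - V^-$ with $V^-\in L_\infty(\mu)$ — the cleanest approach uses Lemma~\ref{admis-char}: that lemma, applied with the measure $\mu$ and the weight function $m := \rho^{-1}$ (so that $T^m$ in the notation there is precisely $T^\rho$ here, up to the change of underlying measure from $\mu$ to $m^p\mu = \rho^{-2}\mu$), would immediately give that $V$ is $T^\rho$-admissible iff $V$ is $T$-admissible, with $(T^\rho)_V = (T_V)^\rho$. The subtlety is that Lemma~\ref{admis-char} is stated for $T^m$ acting on $L_p(m^p\mu)$, whereas here $T^\rho$ acts on $L_2(\mu)$ itself; so I would either (i) invoke the lattice isomorphism $L_2(\rho^{-2}\mu)\ni f\mapsto \rho^{-1}f\in L_2(\mu)$ to transfer the statement, or (ii) redo the short argument of Lemma~\ref{admis-char} in the present setting. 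Concretely, for bounded $W$ one checks $A^\rho + W = (A+W)^\rho$ where $A^\rho$ is the generator of $T^\rho$ (its domain is $\{f \setcol \rho f\in\dom(A)\}$, with $A^\rho f = \rho^{-1}A\rho f$), whence $(T^\rho)_W = (T_W)^\rho$; then the definition of admissibility for $-V^- \in L_\infty$ is trivial, for $V^+\geqslant0$ one uses $(T^\rho)_{V^+\wedge n} = (T_{V^+\wedge n})^\rho$ together with the fact that $f\mapsto\rho^{-1}f$ is a positivity-preserving (lattice) map so that $s$-$\lim_n$ commutes with the similarity, and finally one assembles $(T^\rho)_V = \slim_n (T^\rho)_{(V\wedge n)\vee(-n)} = \slim_n (T_{(V\wedge n)\vee(-n)})^\rho = (T_V)^\rho$, strong convergence being preserved because $f\mapsto\rho^{-1}f$ maps the bounded set $\{T_{V_n}(t)\rho f\}$ suitably — again using \eqref{weighted-assu}-type uniform bounds (here for $T_V$, which follow from $T_V\leqslant T_{-V^-}$ and boundedness of $-V^-$) to make the passage to the limit in $L_2(\mu)$ legitimate.

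\textbf{Main obstacle.} The genuinely delicate point throughout is the same one: strong convergence and algebraic identities are easy on the dense subspace $\{f \setcol \rho f\in L_2(\mu)\}$, but $\rho^{-1}$ is an \emph{unbounded} multiplier, so passing from this dense subspace to all of $L_2(\mu)$ — whether for the semigroup law, for strong continuity, or for $\slim_n(T^\rho)_{V_n}(t) = (T^\rho)_V(t)$ — requires in each instance a uniform (in $t$ on compacts, in $n$, and/or in the approximating sequence) operator-norm bound, which is exactly what the hypothesis \eqref{weighted-assu} (and its consequences for $T_V$ and the truncations $T_{V_n}$) is there to supply. I expect the write-up to consist mostly of carefully invoking \eqref{weighted-assu} at each such limiting step, plus one clean statement of the ``similarity via a closed injective densely-defined-range operator preserves $C_0$-semigroups'' principle that underlies both parts.
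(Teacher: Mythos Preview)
Your proposal has the right overall shape, but there are two genuine gaps, and one place where you manufacture a difficulty that isn't there.

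\textbf{Semigroup law.} Your worry that $\rho^{-1}T(t)\rho f$ need not lie in $\dom(\rho)$ is unfounded: if $f\in\dom(\rho)$ then $\rho\cdot\bigl(\rho^{-1}T(t)\rho f\bigr)=T(t)\rho f\in L_2(\mu)$ automatically, so $\dom(\rho)$ is $T^\rho(t)$-invariant and the semigroup law on $\dom(\rho)$ is a one-line computation. The paper simply says ``it is easy to see''.

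\textbf{Strong continuity.} Here your argument has a real gap. You claim that $T(t)\rho f\to\rho f$ in $L_2$ gives $\rho^{-1}T(t)\rho f\to f$ in measure, and that the uniform bound $\|T^\rho(t)f\|_2\leqslant Me^{\omega t}\|f\|_2$ then upgrades this to $L_2$-convergence. But convergence in measure together with a uniform $L_2$-bound does \emph{not} imply $L_2$-convergence (you would need uniform integrability of $|T^\rho(t)f|^2$, which is not available when $M>1$); and on a general measure space convergence in measure is not even well-defined. The paper bypasses this entirely: for $f\in\dom(\rho)$ and $g\in\dom(\rho^{-1})$ one has $\langle T^\rho(t)f,g\rangle=\langle T(t)\rho f,\rho^{-1}g\rangle$, which is continuous in $t$; density plus the bound~\eqref{weighted-assu} extends this to all $f,g$, and then weak continuity of a semigroup implies strong continuity (\cite[Thm.~I.5.8]{EnNa00}).

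\textbf{The bounded-$V$ case.} Your route via the generator identity $A^\rho+W=(A+W)^\rho$ relies on the claim that $\dom(A^\rho)=\{f\setcol\rho f\in\dom(A)\}$ with $A^\rho f=\rho^{-1}A\rho f$. This is not justified: because $T^\rho$ is the \emph{extension by continuity} of $\rho^{-1}T(\cdot)\rho$, its generator need not be the naive conjugate of $A$ --- knowing $\rho f\in\dom(A)$ gives $\tfrac1t(T(t)\rho f-\rho f)\to A\rho f$ in $L_2$, but applying the unbounded $\rho^{-1}$ to this convergent net need not yield convergence in $L_2$. (Your attempted transfer from Lemma~\ref{admis-char} does not help either: the $T^m$ there acts on $L_2(m^2\mu)$ and is isometrically equivalent to $T$ itself, not to $T^\rho$ on $L_2(\mu)$.) The paper avoids identifying $A^\rho$ altogether and instead uses the Trotter product formula: for bounded $V$ one has $(T^\rho(t/n)e^{-Vt/n})^n f=\rho^{-1}(T(t/n)e^{-Vt/n})^n\rho f$ on $\dom(\rho)$, and the closedness of multiplication by $\rho^{-1}$ together with the known strong limits on both sides forces $(T^\rho)_V(t)f=\rho^{-1}T_V(t)\rho f$.

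Once the bounded case is in hand, your extension to unbounded $V\geqslant 0$ by passing to the limit in $(T^\rho)_{V\wedge n}=(T_{V\wedge n})^\rho$ is essentially the paper's argument; the paper then notes that strong continuity of $(T_V)^\rho$ (hence $T^\rho$-admissibility of $V$) follows from the already-established weak-continuity argument applied to $T_V$ in place of $T$.
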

\begin{proof}
For $t\geqslant0$ the operator $\rho^{-1}T(t)\rho$ is defined on the dense subspace
$\dom(\rho) = \{f\in L_2(\mu) \setcol \rho f\in L_2(\mu)\}$ and extends by continuity to a bounded operator $T^\rho(t)$ on~$L_2(\mu)$, by~\eqref{weighted-assu}.
It is easy to see that $T^\rho$ is a semigroup;
we show that it is strongly continuous, arguing as in \cite[proof of Prop.~1]{Vo92}.
If $f\in\dom(\rho)$ and $g\in\dom(\rho^{-1})$, then $t\mapsto \langle T^\rho(t)f,g\rangle$ $(= \langle T(t)\rho f,\rho^{-1}g\rangle)$ is continuous.
The bound~\eqref{weighted-assu} implies that the continuity carries over to all $f,g\in L_2(\mu)$.
Thus, $T^\rho$ is weakly continuous and hence strongly continuous; see \cite[Thm.~I.5.8]{EnNa00}.

For bounded~$V$ the identity $(T^\rho)_V = (T_V)^\rho$ follows from the Trotter product formula stated in~\cite[Exercise~III.5.11]{EnNa00}.
In the case of unbounded $V\geqslant0$ we infer that
\[
  (T^\rho)_V(t) = \slim_{n\to\infty}(T^\rho)_{V\wedge n}(t) =\slim_{n\to\infty}(T_{V\wedge n})^\rho(t) = (T_V)^\rho(t)
  \qquad (t\geqslant0);
\]
then as above, the strong continuity of $T_V$ implies that $(T_V)^\rho$ (and hence $(T^\rho)_V$) is strongly continuous, so $V$ is $T^\rho$-admissible.
The assertion in the general case follows similarly.
\end{proof}

Now we can turn to the proof of Theorem~\ref{weighted-ultracon}.

\begin{proof}[Proof of Theorem~\ref{weighted-ultracon}]
By Proposition~\ref{weighted-ultracon2}, the assumed estimates
$\|T_{pV}(t)\|_{2\to2}\leqslant1$ and
$\|\rho^{-\alpha}T(t)\rho^\alpha\|_{2\to2} \leqslant e^{\alpha^2t}$ $(t>0,\ \alpha\in\R)$
imply that
\begin{equation}\label{miracle}
  \|\rho^{-\alpha}T_{pV}(t)\rho^\alpha\|_{2\to2} \leqslant e^{\alpha^2t}
  \qquad (t>0,\ \alpha\in\R). 
\end{equation}

Now fix $\alpha>0$. Let $n\in\N$ and $V_n := V\vee(-n)$.
Applying Proposition~\ref{weighted-scontinuity} with $\rho^\alpha$ in place of $\rho$, we see that
\[
  T_{\rho,\alpha}(t) := e^{-\alpha^2t}\rho^{-\alpha}T(t)\rho^\alpha
  \qquad (t\geqslant0)
\]
defines a $C_0$-semigroup $T_{\rho,\alpha}$ on $L_2(\mu)$.
Moreover, for $s \in \{1,p\}$ the potential $sV_n$ is $T_{\rho,\alpha}$-admissible, and $(T_{\rho,\alpha})_{sV_n}(t) = e^{-\alpha^2t}\rho^{-\alpha}T_{sV_n}(t)\rho^\alpha$ on~$\dom(\rho^\alpha)$ for all $t\geqslant0$.

Let $q:=2p$ and observe that
\[
  \frac{1}{q'} =  \frac{1/p'}{1} + \frac{1/p}{2}, \qquad
  \frac{1}{q} = \frac{1/p'}{\infty} + \frac{1/p}{2}\,.
\]
Note that $\|(T_{\rho,\alpha})(t)\|_{1\to\infty} \leqslant ct^{-\nu}$ by assumption.
Inequalities $T_{pV_n}(t) \leqslant T_{pV}(t)$ and~\eqref{miracle}
imply that $\|(T_{\rho,\alpha})_{pV_n}(t)\|_{2\to2} \leqslant 1$.
Thus, using Theorem~\ref{thm-interpol} we obtain
\[
  \|(T_{\rho,\alpha})_{V_n}(t)\|_{q'\to q}
  \leqslant \|(T_{\rho,\alpha})(t)\|_{1\to\infty}^{1/p'}\|(T_{\rho,\alpha})_{pV_n}(t)\|_{2\to2}^{1/p}
  \leqslant c^{1/p'} t^{-\nu/p'}
  \qquad (t>0).
\]
Moreover, since $T_{V_n} \leqslant T_V$, the assumptions imply
\[
  \|(T_{\rho,\alpha})_{V_n}(t)\|_{1\to1} \leqslant M, \quad
  \|(T_{\rho,\alpha})_{V_n}(t)\|_{\infty\to\infty} \leqslant Me^{(r-1)\alpha^2t}
  \qquad (t>0).
\]
Thus, applying Proposition~\ref{coulhon-extrapol} to the semigroup $(T_{\rho,\alpha})_{V_n}$ we obtain
\[
  \|e^{-\alpha^2t}\rho^{-\alpha}T_{V_n}(t)\rho^\alpha\|_{1\to\infty}
  \leqslant \tilde ct^{-\nu}\bigl(1+(r-1)\alpha^2t\bigr)^{\nu/2p}
  \qquad (t>0),
\]
with a constant $\tilde c>0$ depending only on $p,c,\nu$ and~$M$.
Now the assertion follows by taking the limit $n\to\infty$.
\end{proof}

\subsection{Kernel estimates for Schrödinger semigroups with exponentially weighted $L_1$-bounds}\label{sec-kernel-bounds-exponential}

In this subsection we show the kernel estimate~\eqref{kernel-estimate-exponential}. 
Throughout let $\Omega \subseteq \R^d$ be open, $\DL$ the Dirichlet Laplacian on $\Omega$ and $T$ the generated $C_0$-semigroup on $L_2(\Omega)$.
It is well-known that $T$ is dominated by the free heat semigroup $T^{\R^d}$ on $L_2(\R^d)$, i.e., $T(t) (\ind_{\Omega}f) \leqslant T^{\R^d}(t)f$ for all $t \geqslant 0$ and all $0 \leqslant f \in L_2(\R^d)$; see, e.g., \cite[Prop.~4.23]{ouh05}.
For $\xi \in \R^d$ let $\rho_{\xi}\colon\Omega \rightarrow (0,\infty)$ be defined by $\rho_{\xi}(x):=e^{\xi \cdot x} \ (x \in \Omega)$.
Let $V\colon\Omega \rightarrow \R$ be measurable and assume that there exists $\alpha \in (0,1)$ such that 
\begin{equation}\label{form-small-dirichlet}
\int_{\Omega} |V||u|^2 \dx \leqslant \alpha \langle -\DL u,u \rangle \qquad (u \in \dom(\DL)); 
\end{equation}
note that then $V$ is $T$-admissible by Proposition~\ref{admissibility}.

Further, let $\xi \in \R^d$, and for $t > 0$ let $k_t$ be the integral kernel of $T(t)$.
Then the integral kernel $k_{\xi,t}$ of $\rho_{\xi}T(t)\rho_{\xi}^{-1}$ is given by $k_{\xi,t}(x,y) = e^{\xi \cdot (x - y)} k_t(x,y)$, and since $T$ is dominated by the free heat semigroup, we obtain
\begin{equation}\label{kxit-est}
k_{\xi,t}(x,y) \leqslant (4\pi t)^{-d/2} e^{\xi \cdot (x - y) - |x-y|^2/4t} = (4\pi t)^{-d/2} e^{|\xi|^2 t -|x-y-2t\xi|^2/4t} \qquad (x,y \in \Omega).
\end{equation}
It easily follows that
\begin{equation}\label{Txi-est}
\|\rho_{\xi}T(t)\rho_{\xi}^{-1}\|_{p \rightarrow p} \leqslant e^{|\xi|^2t} \qquad \bigl(t \geqslant 0,\ p \in [1,\infty]\bigr).
\end{equation}

Now we can prove the kernel estimate \eqref{kernel-estimate-exponential}.

\begin{thm}\label{kernel-bound-exponential}
Let \eqref{form-small-dirichlet} be true, and assume that there exists $M \geqslant 1$ such that 
\[
\|\rho_{\xi}T_V(t)\rho_{\xi}^{-1}\|_{1 \rightarrow 1} \leqslant Me^{|\xi|^2t} \qquad (t \geqslant 0,\ \xi \in \R^d). 
\]
Then for every $t > 0$ the operator $T_V(t)$ has an integral kernel $k_t^V \in L_{\infty}(\Omega \times \Omega)$ such that
\[
0 \leqslant k_t^V(x,y) \leqslant ct^{-d/2} e^{-|x-y|^2/4t} \qquad (\text{a.e. } x,y \in \Omega)
\]
for some $c > 0$ independent of $t$.
\end{thm}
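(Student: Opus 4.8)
The plan is to run the Davies trick, using Theorem~\ref{weighted-ultracon} to produce a weighted ultracontractivity bound for $T_V$ and then optimizing over the weight parameter. First I would set up the ingredients needed to invoke Theorem~\ref{weighted-ultracon} with $\rho = \rho_{e_1}$ replaced — more precisely, with the family of weights $\rho_\xi$. The heat semigroup $T$ on $L_2(\Omega)$ is dominated by the free heat semigroup $T^{\R^d}$, whose exponentially weighted kernel is the Gaussian $(4\pi t)^{-d/2}\exp(-|x-y+2t\xi|^2/4t) = (4\pi t)^{-d/2}e^{-|x-y|^2/4t}e^{-\xi\cdot(x-y)}e^{-|\xi|^2 t}$; hence
\[
\|\rho_\xi^{-1}T(t)\rho_\xi\|_{1\to\infty} \leqslant \|\rho_\xi^{-1}T^{\R^d}(t)\rho_\xi\|_{1\to\infty} \leqslant (4\pi t)^{-d/2}e^{|\xi|^2 t},
\]
which is exactly the hypothesis $\|\rho^{-\alpha}T(t)\rho^\alpha\|_{1\to\infty} \leqslant ct^{-\nu}e^{\alpha^2 t}$ of Theorem~\ref{weighted-ultracon} with $\nu = d/2$, $c=(4\pi)^{-d/2}$, and $\rho^\alpha$ running through $\rho_\xi$. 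For the $L_2$-contractivity hypothesis, I would use form smallness: since \eqref{form-small-dirichlet} holds with some $\alpha\in(0,1)$, pick $p\in(1,1/\alpha)$, so that $p|V|$ is still form small relative to $-\DL$ with constant $p\alpha<1$; by Proposition~\ref{admissibility}, $pV$ is $T$-admissible and $T_{pV}$ is contractive on $L_2$, giving $\|T_{pV}(t)\|_{2\to2}\leqslant 1$. The weighted $L_1$-bound $\|\rho_\xi T_V(t)\rho_\xi^{-1}\|_{1\to1}\leqslant Me^{|\xi|^2 t}$ is precisely the hypothesis of the present theorem (up to replacing $\xi$ by $-\xi$, which is harmless by symmetry).

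**Applying Theorem~\ref{weighted-ultracon}** then yields a constant $\tilde c>0$ with
\[
\|\rho_\xi^{-1} T_V(t)\rho_\xi\|_{1\to\infty} \leqslant \tilde c\, t^{-d/2}e^{|\xi|^2 t}\qquad(t>0,\ \xi\in\R^d).
\]
Here I should be careful about the direction of the weight: Theorem~\ref{weighted-ultracon} is stated for $\rho^{-\alpha}T_V(t)\rho^\alpha$ with $\alpha\in\R$, and taking $\rho=\rho_{e_1}$ and letting $\alpha$ be a real scalar only gives weights along the $e_1$-direction, whereas for the full Gaussian in $\R^d$ I need weights $\rho_\xi$ for all $\xi\in\R^d$. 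The clean fix is to apply the theorem separately for each fixed direction — or, better, to observe that Theorem~\ref{weighted-ultracon}'s proof works verbatim with $\rho^\alpha$ replaced throughout by the family $\{\rho_\xi : \xi\in\R^d\}$, since the only properties used are that these are positive multipliers and that $\|\rho_\xi^{-1}T(t)\rho_\xi\|_{\cdot\to\cdot}$ obeys the stated bounds with $|\xi|^2$ in the exponent. I would state this as a remark or fold it into the proof by noting that for fixed $\xi$ the one-parameter family $s\mapsto\rho_{s\xi}=\rho_\xi^s$ (with $\rho_\xi = e^{\xi\cdot x}$, $|\xi|=1$, and scalar $s$) is exactly of the form $\rho^\alpha$, so Theorem~\ref{weighted-ultracon} applies directly with $\rho=\rho_\xi$, $\alpha=s=|\xi|$.

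**From the weighted ultracontractivity estimate to the kernel bound** is the standard Davies argument. Since $T_V(t)$ is a bounded operator from $L_1$ to $L_\infty$ it has a kernel $k_t^V\in L_\infty(\Omega\times\Omega)$ with $\|k_t^V\|_\infty = \|T_V(t)\|_{1\to\infty}$; positivity $k_t^V\geqslant 0$ follows from positivity of $T_V$. The weighted estimate says that $\rho_\xi^{-1}T_V(t)\rho_\xi$ has kernel $e^{-\xi\cdot x}k_t^V(x,y)e^{\xi\cdot y} = e^{-\xi\cdot(x-y)}k_t^V(x,y)$ with $L_\infty$-norm $\leqslant\tilde c\, t^{-d/2}e^{|\xi|^2 t}$, i.e.
\[
k_t^V(x,y) \leqslant \tilde c\, t^{-d/2} e^{|\xi|^2 t + \xi\cdot(x-y)}\qquad\text{for a.e. }x,y\in\Omega,\ \text{all }\xi\in\R^d.
\]
(A small technical point: the null set where the bound fails a priori depends on $\xi$; one handles this by taking $\xi$ in a countable dense set and using continuity of $\xi\mapsto e^{|\xi|^2 t+\xi\cdot(x-y)}$, or by a Fubini/density argument.) Now I minimize the right-hand side over $\xi$: the function $\xi\mapsto |\xi|^2 t + \xi\cdot(x-y)$ is minimized at $\xi = -(x-y)/2t$, with minimum value $-|x-y|^2/4t$. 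Substituting gives
\[
k_t^V(x,y)\leqslant \tilde c\, t^{-d/2} e^{-|x-y|^2/4t}\qquad(\text{a.e. }x,y\in\Omega),
\]
which is the claimed estimate with $c=\tilde c$. I do not expect any serious obstacle here; the one place demanding genuine care is the null-set/measurability bookkeeping in passing from "for each $\xi$, a.e.\ $(x,y)$" to "a.e.\ $(x,y)$, for all $\xi$", together with making sure the weights $\rho_\xi$ for general $\xi\in\R^d$ (not just along one axis) are covered — both of which are routine but should be addressed explicitly rather than waved away.
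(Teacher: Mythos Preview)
Your proposal is correct and follows essentially the same route as the paper: establish the weighted $L_1\to L_\infty$ bound for $T$ via domination by the free heat semigroup, use form smallness with $p\in(1,1/\alpha)$ to get $\|T_{pV}(t)\|_{2\to2}\leqslant1$, apply Theorem~\ref{weighted-ultracon} for each fixed unit direction $\xi$ with the one-parameter family $\beta\mapsto\rho_\xi^\beta$, and then run the Davies trick. The paper handles the multi-directional issue exactly as you suggest (fixing $|\xi|=1$ and letting the scalar $\beta$ vary), and it does not dwell on the null-set bookkeeping you flag, treating it as standard.
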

\begin{proof}
Without loss of generality let $\K=\C$.

It follows from \eqref{kxit-est} that
\[
\|\rho_{\xi}T(t)\rho_{\xi}^{-1}\|_{1 \rightarrow \infty} \leqslant (4 \pi t)^{-d/2} e^{|\xi|^2t} \qquad (t > 0,\ \xi \in \R^d);
\]
Expressed differently,
\[
\|\rho_{\xi}^{-\beta}T(t)\rho_{\xi}^{\beta} \|_{1 \rightarrow \infty} \leqslant (4 \pi t)^{-d/2} e^{\beta^2t} \qquad (t > 0,\ \beta \in \R)
\]
for all $\xi \in \R^d$ with $|\xi| = 1$. Moreover, by \eqref{Txi-est} we have
\[
\|\rho_{\xi}^{-\beta}T(t)\rho_{\xi}^{\beta} \|_{2 \rightarrow 2} \leqslant e^{\beta^2t} \qquad (t \geqslant 0,\ \beta \in \R)
\]
for all $\xi \in \R^d$ with $|\xi| = 1$.

Let $p := 1/\alpha$ ($>1$). Then, by \eqref{form-small-dirichlet} and Proposition~\ref{admissibility}, the potential $pV$ is $T$-admissible and $T_{pV}$ is contractive.
Since
\[
\|\rho_{\xi}^{-\beta}T_V(t)\rho_{\xi}^{\beta} \|_{1 \rightarrow 1} \leqslant M e^{\beta^2t} \qquad (t \geqslant 0, \ \beta \in \R)
\]
for all $\xi \in \R^d$ with $|\xi| = 1$ by hypothesis and thus also 
\[
\|\rho_{\xi}^{-\beta}T_V(t)\rho_{\xi}^{\beta}\|_{\infty \to \infty} \leqslant Me^{\beta^2 t} \qquad (t \geqslant 0, \ \beta \in \R)
\] 
by duality and the self-adjointness of $T_V$, we can now apply Theorem~\ref{weighted-ultracon} (with $r=1$) and conclude that there exists $c > 0$ such that  
\[
\|\rho_{\xi}^{-\beta}T_V(t) \rho_{\xi}^{\beta} \|_{1 \rightarrow \infty} \leqslant ct^{-d/2}e^{\beta^2t} \qquad (t > 0,\ \beta \in \R)
\]
for all $\xi \in \R^d$ with $|\xi| = 1$, i.e.,
$\|\rho_{\xi}T_V(t)\rho_{\xi}^{-1}\|_{1 \rightarrow \infty} \leqslant ct^{-d/2}e^{|\xi|^2t}$ for all $t > 0$ and all $\xi \in \R^d$. Hence, the Dunford-Pettis theorem implies that for every $t > 0$ the operator $T_V(t)$ has an integral kernel $k_t^V \in L_{\infty}(\Omega \times \Omega)$ such that
\[
0 \leqslant k_t^V(x,y) \leqslant ct^{-d/2} e^{-\xi \cdot (x-y)} e^{|\xi|^2t} \qquad (\text{a.e. } x,y \in \Omega)
\]
for all $\xi \in \R^d$. The assertion now follows from the well-known Davies trick by putting $\xi = (x-y)/2t$.
\end{proof}

\section{Kernel estimates for Schrödinger semigroups: the boundary term}\label{ch-kernel-bounds-phragmen}

In this section we prove the kernel estimate~\eqref{kernel-estimate-double}.
We apply Theorem~\ref{weighted-ultracon} to a weighted Schrödinger semigroup on the positive half-space provided with the weighted measure $x_1^2\dx$ and then use Davies trick. In order to be able to apply Theorem~\ref{weighted-ultracon}, we need to show an exponentially weighted ultracontractivity estimate for this semigroup, which is topic of Subsection~\ref{sec-kernel-bound-double}. The kernel estimate~\eqref{kernel-estimate-double} is also shown in Subsection~\ref{sec-kernel-bound-double}.

\subsection{$L_1$-contractivity and ultracontractivity of the heat semigroup on the positive half-space with weight $x_1$}\label{ch-ultra-stoch-heat-semi}

In this subsection let  
\begin{align*}
\Omega_0:= \begin{cases} 
(0,\infty) \qquad  \qquad &\text{if } d=1, \\
(0,\infty) \times \R^{d-1} &\text{if } d \geqslant 2.
\end{cases}
\end{align*}
Let $T$ be the $C_0$-semigroup on $L_2(\Omega_0)$ generated by the Dirichlet Laplacian on $\Omega_0$, and let $m\colon\Omega_0 \rightarrow (0,\infty)$ be defined by $m(x):= x_1$ ($x \in \Omega_0$).
As in Lemma~\ref{admis-char} we define the unitarily transformed semigroup $T^m$ on $L_2(\Omega_0,m^2\lambda^d)$ by $T^m(t)f := m^{-1}T(t)mf$, where $\lambda^d$ is the Lebesgue measure on $\Omega_0$. We show that the semigroup $T^m$ satisfies exponentially weighted $L_1$- and ultracontractivity estimates on $(\Omega_0,m^2\lambda^d)$.
Later, in Section~\ref{ch-estimates-exponential-double}, we will use these estimates to show corresponding estimates for the perturbed semigroup $(T^m)_V$, in dimension $d=1$.

We will use the fact that for every $t > 0$ the operator $T(t)$ has the integral kernel $k_t\colon\Omega_0 \times \Omega_0 \rightarrow (0,\infty)$ defined by
\[
k_t(x,y):= (4\pi t)^{-d/2}(e^{-|x-y|^2/4t} - e^{-|Sx-y|^2/4t}) \qquad (x,y \in \Omega_0),
\]
where $S:\Omega_0 \to \R^d$ is defined by $Sx:= (-x_1,x_2,\dots,x_d)$. This can be seen by a reflection principle. Observe that 
\begin{equation}\label{halfspace-kernel}
k_t(x,y)= (4\pi t)^{-d/2}e^{-|x-y|^2/4t} \left(1 - e^{-x_1y_1/t} \right) \qquad (x,y \in \Omega_0).
\end{equation}
Thus, by the elementary inequality $\frac12(1\wedge r) \leqslant 1 - e^{-r} \leqslant 1\wedge r$ ($r\geqslant0$), we have
\begin{equation}\label{kernel-est-0}
\tfrac12 \bigl(1 \wedge \tfrac{x_1y_1}{t} \bigr) (4\pi t)^{-d/2}e^{-|x-y|^2/4t} \leqslant k_t(x,y) \leqslant \bigl(1 \wedge \tfrac{x_1y_1}{t} \bigr) (4\pi t)^{-d/2}e^{-|x-y|^2/4t}
\end{equation}
for all $x,y \in \Omega_0$.

To show that $T^m$ satisfies exponentially weighted $L_1$-estimates, we will use the following properties of the kernel~$k_t$.

\begin{lem}\label{int-kernel-heat-semi} 
\rma Let $t > 0$ and $y \in \Omega_0$. Then
\[
\int_{\Omega_0}\frac{x_1}{y_1} k_t(x,y) \dx = 1.
\]

\noindent \rmb Let $t > 0$, $y \in \Omega_0$ and $\xi \in \R^d$. Then
\[
\int_{\Omega_0} \frac{x_1}{y_1} e^{\xi \cdot (x-y)} k_t(x,y) \dx \leqslant 2^{d/2+1}e^{2|\xi|^2t}.
\]
Moreover, if $\xi_1\leqslant0$, then
\[
\int_{\Omega_0} \frac{x_1}{y_1} e^{\xi \cdot (x-y)} k_t(x,y) \dx \leqslant e^{|\xi|^2t}.
\]
\end{lem}

\begin{proof}
(a) Note that in the case $d > 1$ we have
\begin{align*}
&\int_{\Omega_0}\frac{x_1}{y_1} k_t(x,y) \dx \\
&= \int_0^{\infty} \frac{x_1}{y_1} (4 \pi t)^{-1/2} e^{-(x_1 - y_1)^2/4t} \bigl(1- e^{- x_1 y_1/t} \bigr) \int_{\R^{d-1}} (4 \pi t)^{-(d-1)/2} e^{-|\widehat{x} - \widehat{y}|^2/4t} \, d\widehat{x} \,dx_1 \\ 
&= \int_0^{\infty} \frac{x_1}{y_1} (4 \pi t)^{-1/2} e^{-(x_1 - y_1)^2/4t} \bigl(1- e^{- x_1 y_1/t} \bigr) \,dx_1
\end{align*}
for all $t > 0$ and all $y = (y_1, \widehat{y}) \in (0,\infty) \times
\R^{d-1} = \Omega_0$. Hence, we may assume that $d=1$.

Now let $t,y\in(0,\infty)$. Then
\[
\int_0^{\infty} \frac{x}{y} k_t(x,y) \,dx 
=(4 \pi t)^{- 1/2}y^{-1} \bigg( \int_0^{\infty} x e^{-(x-y)^2/4t} \,dx - \int_0^{\infty} x e^{-(-x-y)^2/4t} \,dx \bigg).
\]
Since
\[
  - \int_0^{\infty} x e^{-(-x-y)^2/4t} \,dx = \int_{-\infty}^0 x e^{-(x-y)^2/4t} \,dx,
\]
we conclude that
\begin{align*}
\int_0^{\infty} \frac{x}{y} k_t(x,y) \,dx
 &= (4 \pi t)^{- 1/2}y^{-1} \int_{-\infty}^\infty x e^{-(x-y)^2/4t} \,dx
 = (4 \pi t)^{- 1/2}y^{-1} \int_{-\infty}^\infty (z+y) e^{-z^2/4t} \,dz \\
 &= (4 \pi t)^{- 1/2} \int_{-\infty}^{\infty} e^{-z^2/4t} \,dz = 1. 
\end{align*}

(b) We observe that the Peter-Paul inequality (i.e. $ab \leqslant \frac{1}{2}(\gamma a^2 + \gamma^{-1} b^2)$ for all $a,b \in \R$, $\gamma > 0$) implies
\[
\xi \cdot (x-y) \leqslant 2|\xi|^2t + \frac{1}{8}\frac{|x-y|^2}{t} \qquad (x \in \Omega_0).
\]
Moreover, since the function $[0,\infty) \ni r \mapsto 1-e^{-r} \in [0,\infty)$ is concave and takes the value~$0$ for $r = 0$, we have that 
\[
1-e^{-r} \geqslant \frac12(1-e^{-2r}) \qquad (r\geqslant0).
\]
Using these inequalities we estimate
\begin{align*}
&(4 \pi t)^{-d/2} \int_{\Omega_0}\frac{x_1}{y_1} e^{\xi \cdot (x-y)} e^{-|x-y|^2/4t}(1-e^{-x_1y_1/t}) \dx \\ 
&\qquad\leqslant 2^{d/2} (8 \pi t)^{-d/2} e^{2|\xi|^2t} \int_{\Omega_0}\frac{x_1}{y_1} e^{-|x-y|^2/8t}\,2(1-e^{-x_1y_1/2t}) \dx \\
&\qquad= 2^{d/2 + 1} e^{2|\xi|^2t} \int_{\Omega_0}\frac{x_1}{y_1} k_{2t}(x,y) \dx.
\end{align*}
This completes the proof of the first assertion of~(b), by part (a).

For the proof of the second assertion we put
\[
G(\xi):= (4\pi t)^{-d/2}\int_{\Omega_0} \frac{x_1}{y_1} e^{-|x-y|^2/4t + \xi\cdot(x-y) -|\xi|^2t} (1 - e^{-x_1y_1/t}) \dx \qquad (\xi \in \R^d);
\]
we have to show that $G(\xi) \leqslant 1$ if $\xi_1 \leqslant 0$.
Let $F \colon \R^d \to [0,\infty)$ be defined by $F(x):= (4\pi t)^{-d/2}\frac{x_1}{y_1}(1-e^{-x_1y_1/t})$ if $x \in \Omega_0$ and $F(x):= 0$ if $x \in \R \setminus \Omega_0$. Then, using the identity
\begin{equation}\label{binomi}
-\frac{1}{4t}|x-y|^2 + \xi\cdot(x-y) -|\xi|^2t = -\frac{1}{4t}|x-y-2\xi t|^2
\end{equation}
and a change of variables, we can rewrite $G(\xi)$ as
\[
G(\xi) = \int_{\Omega_0} F(x) e^{-|x-y-2\xi t|^2/4t} \dx = \int_{\R^d} F(z + y + 2\xi t) e^{-|z|^2/4t} \,dz \qquad (\xi \in \R^d).
\]
Therefore, since $F$ is monotone increasing in the $x_1$-variable and $F(x) = F(x_1,0,\dots,0)$ for all $x \in \R^d$, $G\colon \R^d \to [0,\infty)$ is increasing in the $\xi_1$-variable and $G(\xi)=G(\xi_1,0,\dots,0)$ for all $\xi \in \R^d$, which in turn implies that $G(\xi) \leqslant G(0)$ for all $\xi \in \R^d$, $\xi_1 \leqslant 0$.
Now $G(0) = \int_{\Omega_0} \frac{x_1}{y_1} k_t(x,y)\dx = 1$ by part~(a), so we conclude that the asserted estimate $G(\xi) \leqslant 1$ holds for $\xi_1 \leqslant 0$.
\end{proof}

Now we turn to the proof of the exponentially weighted $L_1$- and ultracontractivity estimates for~$T^m$.

\begin{prop}\label{stoch-ultra-double-weighted}
\rma Let $\xi \in \R^d$ and $t \geqslant 0$. Then
\[
\|m^{-1}\rho_{\xi}T(t)\rho_{\xi}^{-1}m\|_{L_1(m^2\lambda^d) \to L_1(m^2\lambda^d)} \leqslant 2^{d/2 + 1}e^{2|\xi|^2t}.
\]
Moreover, if $\xi_1 \leqslant 0$, then 
\[
\|m^{-1}\rho_{\xi}T(t)\rho_{\xi}^{-1}m\|_{L_1(m^2\lambda^d) \rightarrow L_1(m^2\lambda^d)} \leqslant e^{|\xi|^2t}.
\]
\noindent \rmb Let $\xi \in \R^d$ and $t > 0$. Then 
\[
\|m^{-1}\rho_{\xi}T(t)\rho_{\xi}^{-1}m \|_{L_1(m^2\lambda^d) \rightarrow L_\infty(m^2\lambda^d)} \leqslant (4 \pi)^{- d/2} t^{-(d/2 +1)}e^{|\xi|^2t}.
\]
\end{prop}
\begin{proof}
(a)
For $t=0$ there is nothing to show, so we assume that $t > 0$. Then $k_t$ is the integral kernel of $T(t)$; hence for $f \in L_1(m^2\lambda^d)$ with $\rho_{\xi}^{-1}mf \in L_2(\Omega_0)$ we obtain
\begin{align}
\|m^{-1}\rho_{\xi}T(t)\rho_{\xi}^{-1}mf\|_{L_1(m^2\lambda^d)} &\leqslant \int_{\Omega_0} \int_{\Omega_0} x_1^{-1}e^{\xi \cdot x}k_t(x,y) e^{-\xi \cdot y}y_1 |f(y)| \,dy \,d(m^2\lambda^d)(x) \nonumber \\
&= \int_{\Omega_0} \int_{\Omega_0} \frac{x_1}{y_1} e^{\xi \cdot (x-y)} k_t(x,y) \,dx \,f(y) \,d(m^2\lambda^d)(y). \label{norm-heat-se-simply-weighted}
\end{align}
Consequently, the assertions of (a) follow from Lemma~\ref{int-kernel-heat-semi}(b).

(b) Let $f \in L_1(m^2\lambda^d)$ be such that $\rho_{\xi}^{-1}mf \in L_2(\Omega_0)$. Then we estimate, using \eqref{kernel-est-0} for the first and \eqref{binomi} for the second inequality, 
\begin{align*}
|m^{-1}\rho_{\xi}T(t)\rho_{\xi}^{-1}m f(x)| &\leqslant \int_{\Omega_0} \frac{1}{x_1 y_1} \cdot \frac{x_1 y_1}{t} (4 \pi t)^{-d/2} e^{\xi \cdot (x-y)}e^{-|x-y|^2/4t} |f(y)| \, d(m^2 \lambda^d)(y)  \\ 
&\leqslant (4 \pi)^{- d/2} t^{- (d/2 + 1)} e^{|\xi|^2t}\|f\|_{L_1(m^2 \lambda^d)} \qquad (\text{a.e. } x \in \Omega_0). \tag*{\qedhere}
\end{align*}
\end{proof}

\begin{rem}\label{stoch-ultra-double-weighted-rem}
Note that Proposition~\ref{stoch-ultra-double-weighted}(a) with $\xi = 0$ implies that $T^m$ is $L_1$-contractive. Actually, it follows from the identity \eqref{norm-heat-se-simply-weighted} and Lemma~\ref{int-kernel-heat-semi}(a) that $T^m$ is even stochastic, i.e.\ $\|T^m(t)f\|_{L_1(m^2 \lambda^d)} = \|f\|_{L_1(m^2 \lambda^d)}$ for all $f \in L_1\cap L_2(m^2 \lambda^d)_+$.
\end{rem}

\subsection{Kernel estimates for Schrödinger semigroups with weighted $L_1$-bounds on the positive half-space}\label{sec-kernel-bound-double}

Throughout this subsection let $\Omega_0$, $m$ and $\rho_{\xi}$ be defined as in Subsection~\ref{ch-ultra-stoch-heat-semi},
and let $T$ be the $C_0$-semigroup on $L_2(\Omega_0)$
generated by the Dirichlet Laplacian $\DL$ on~$\Omega_0$.
Let $V\colon\Omega \rightarrow \R$ be measurable, and assume that $V$ satisfies the form smallness condition~\eqref{form-small-dirichlet} for some $\alpha \in (0,1)$, with $\Omega = \Omega_0$.
We will now show the kernel estimate 
\eqref{kernel-estimate-double} and then derive the kernel estimate \eqref{kernel-estimate-exponential-double}.

\begin{thm}\label{ultra-Schr-semi-double} 
Let \eqref{form-small-dirichlet} be true, and assume that there exists $M \geqslant 1$ such that 
\begin{align*}
&\|m^{-1}\rho_{\xi}T_V(t)\rho_{\xi}^{-1}m\|_{L_1(m^2\lambda^d) \rightarrow L_1(m^2\lambda^d)} \leqslant Me^{2|\xi|^2t} \qquad (t \geqslant 0, \ \xi \in \R^d, \ \xi_1 \geqslant 0), \\
&\|m^{-1}\rho_{\xi}T_V(t)\rho_{\xi}^{-1}m\|_{L_1(m^2\lambda^d) \rightarrow L_1(m^2\lambda^d)} \leqslant Me^{|\xi|^2t} \qquad (t \geqslant 0, \ \xi \in \R^d, \ \xi_1 \leqslant 0).
\end{align*}
Then for every $t > 0$ the operator $T_V(t)$ has an integral kernel $k_t^V \in L_{\infty}(\Omega \times \Omega)$ such that there exists $c > 0$ independent of $t$ with 
\[
0 \leqslant k_t^V(x,y) \leqslant cx_1y_1t^{-(d/2 + 1)} \smash{\left(1 + \frac{|x-y|^2}{4t} \right)^{\alpha(d+2)/4}} e^{-|x-y|^2/4t} \vphantom{\Big|}
\]  
for a.e.\ $x,y \in \Omega_0$.
\end{thm}

For the proof of Theorem~\ref{ultra-Schr-semi-double} we define the unitarily transformed semigroup $T_V^m$ on $L_2(\Omega_0,m^2\lambda^d)$ by $T_V^m(t)f := m^{-1}T_V(t)mf$ ($t \geqslant 0$, $f \in L_2(m^2\lambda^d)$).
By Lemma~\ref{admis-char}, $T_V^m$ is a $C_0$-semigroup and $V$ is $T^m$-admissible with $(T^m)_V = T_V^m$.
(Recall from Proposition~\ref{admissibility} that $V$ is $T$-admissible.)

\begin{proof}[Proof of Theorem~\ref{ultra-Schr-semi-double}]
Without loss of generality let $\K=\C$.

Let $\xi \in \R^d$, $|\xi| = 1$. By Proposition~\ref{stoch-ultra-double-weighted}(b) we have
\[
\|\rho_{\xi}^{-\beta}T^m(t)\rho_{\xi}^{\beta}\|_{L_1(m^2\lambda^d) \rightarrow L_{\infty}(m^2\lambda^d)} \leqslant (4\pi)^{-d/2}t^{-(d/2 + 1)}e^{\beta^2t}  \qquad (t > 0, \ \beta \in \R).
\]
From \eqref{Txi-est} we deduce that
\[
\|\rho_{\xi}^{-\beta}T^m(t)\rho_{\xi}^{\beta}\|_{L_2(m^2\lambda^d) \rightarrow L_2(m^2\lambda^d)} \leqslant e^{\beta^2t}  \qquad (t \geqslant 0, \ \beta \in \R)
\]
since $L_2(m^2\lambda) \ni f \mapsto mf \in L_2(0,\infty)$ is an isometric isomorphism.
Moreover, the hypotheses imply that
\[
\|\rho_{\xi}^{-\beta}(T^m)_V(t)\rho_{\xi}^{\beta}\|_{L_1(m^2\lambda^d) \to L_1(m^2\lambda^d)} \leqslant Me^{\beta^2 t} \qquad (t \geqslant 0, \ \beta > 0)
\]
if $\xi_1 \geqslant 0$, and
\[
\|\rho_{\xi}^{-\beta}(T^m)_V(t)\rho_{\xi}^{\beta}\|_{L_1(m^2\lambda^d) \to L_1(m^2\lambda^d)} \leqslant Me^{2\beta^2 t} \qquad (t \geqslant 0, \ \beta > 0)
\]
if $\xi_1 \leqslant 0$; by duality and self-adjointness of $(T^m)_V$ the latter is equivalent to
\[
\|\rho_{\xi}^{-\beta}(T^m)_V(t)\rho_{\xi}^{\beta}\|_{L_{\infty}(m^2\lambda^d) \to L_{\infty}(m^2\lambda^d)} \leqslant Me^{2\beta^2 t} \qquad (t \geqslant 0, \ \beta > 0)
\]
for $\xi_1 \geqslant 0$. 

Let $p:= 1/\alpha$ ($> 1$). Then $pV$ is $T^m$-admissible and $(T^m)_{pV}$ is contractive, by Proposition~\ref{admissibility} and Lemma~\ref{admis-char}. Hence, Theorem~\ref{weighted-ultracon} implies that there exists $c > 0$ such that 
\[
\|\rho_{\xi}^{-\beta}T^m_V(t)\rho_{\xi}^{\beta}\|_{L_1(m^2\lambda^d) \rightarrow L_{\infty}(m^2\lambda^d)} \leqslant c t^{- (d/2 + 1)}(1 + \beta^2t)^{(d/2 + 1)/2p}e^{\beta^2t} \qquad (t > 0, \ \beta \in \R),
\]
for all $x \in \R^d$, $\xi_1 \geqslant 0$, $|\xi| = 1$, i.e.,
\[
\|\rho_{\xi}T^m_V(t)\rho_{\xi}^{-1}\|_{L_1(m^2\lambda^d) \rightarrow L_{\infty}(m^2\lambda^d)} \leqslant c t^{- (d/2 + 1)}(1 + |\xi|^2t)^{\alpha(d + 2)/4}e^{|\xi|^2t} \qquad (t > 0, \ \xi \in \R^d).
\]
Thus, by the Dunford-Pettis theorem one concludes that for every $t > 0$ the operator $(T^m)_V(t)$ has an integral kernel $k_t^{V,m} \in L_{\infty}(\Omega_0 \times \Omega_0)$ such that
\begin{equation}\label{m-xi-kernel}
0 \leqslant k_t^{V,m}(x,y) \leqslant ct^{-(d/2 +1)}(1 + |\xi|^2t)^{\alpha(d+2)/4} e^{|\xi|^2t}e^{-\xi \cdot (x-y)} \qquad (\text{a.e. } x,y \in \Omega_0) 
\end{equation}
for all $\xi \in \R^d$. Now one easily shows that 
\[
k_t^V(x,y):= x_1 k_t^{V,m}(x,y)y_1 \qquad (x,y \in \Omega_0)
\]
defines an integral kernel $k_t^V$ of $T_V(t)$, and with \eqref{m-xi-kernel} and Davies trick one sees that this integral kernel satisfies the asserted kernel estimate.
\end{proof}

\begin{rem}
If one could show that 
\begin{equation}\label{l1-estimate-exp-double}
\|m^{-1}\rho_{\xi}T_V(t)\rho_{\xi}^{-1}m\|_{L_1(m^2\lambda^d) \to L_1(m^2\lambda^d)} \leqslant Me^{|\xi|^2t} \qquad (t \geqslant 0)
\end{equation}
also holds for $\xi_1>0$, then with the same proof as above one could infer that $k_t^V(x,y) \leqslant c x_1y_1t^{-(d/2 + 1)} e^{-|x-y|^2/4t}$ for some $c>0$. Thus one would obtain a kernel estimate better than~\eqref{kernel-estimate-double},
without the polynomial correction factor $\bigl(1 + (x-y)^2/4t\bigr)\rule{0pt}{1.6ex}^{\alpha(d+2)/4}$.
Unfortunately, if $\xi_1>0$, then one can show that \eqref{l1-estimate-exp-double} already fails for $V=0$ (cf.\ the proof of Lemma~\ref{int-kernel-heat-semi}(b)).
\end{rem}

As a direct consequence of  Theorems~\ref{kernel-bound-exponential} and~\ref{ultra-Schr-semi-double} we now obtain the kernel estimate~\eqref{kernel-estimate-exponential-double}.

\begin{cor}\label{kernel-bound-Schr-semi}
Let \eqref{form-small-dirichlet} be true, and assume that there exists $M \geqslant 1$ such that 
\begin{align*}
\|\rho_{\xi}T_V(t)\rho_{\xi}^{-1}\|_{L_1(\Omega_0) \rightarrow L_1(\Omega_0)} &\leqslant M e^{|\xi|^2t} \qquad (t \geqslant 0,\ \xi \in \R^d), \\
\|m^{-1}\rho_{\xi}T_V(t)\rho_{\xi}^{-1}m\|_{L_1(m^2\lambda^d) \rightarrow L_1(m^2\lambda^d)} &\leqslant Me^{2|\xi|^2t} \qquad (t \geqslant 0, \ \xi \in \R^d, \ \xi_1 \geqslant 0), \\
\|m^{-1}\rho_{\xi}T_V(t)\rho_{\xi}^{-1}m\|_{L_1(m^2\lambda^d) \rightarrow L_1(m^2\lambda^d)} &\leqslant Me^{|\xi|^2t} \qquad (t \geqslant 0, \ \xi \in \R^d, \ \xi_1 \leqslant 0).
\end{align*}
Then for every $t>0$ the operator $T_V(t)$ has an integral kernel $k_t^{V} \in L_{\infty}(\Omega_0 \times \Omega_0)$, and there exists $c > 0$ independent of~$t$ such that 
\[
0 \leqslant k_t^{V}(x,y) \leqslant c \left(1 \wedge \left( \frac{x_1y_1}{t} \left(1 + \frac{|x-y|^2}{4t} \right)^{\alpha(d+2)/4} \right) \right) t^{-d/2}e^{-|x-y|^2/4t} 
\]
for a.e.\ $x,y \in \Omega_0$.
\end{cor}

\section{$L_1$-estimates for weighted Schrödinger semigroups on the positive real axis}\label{ch-estimates-exponential-double}

In this section we prove Theorem~\ref{thm-main} by showing that the Schrödinger semigroup~$T_V$ on $L_2(0,\infty)$ satisfies the three weighted $L_1$-estimates assumed in Corollary~\ref{kernel-bound-Schr-semi} if the potential~$V$ satisfies the integral condition~\eqref{int-cond}.
The basic tool for the proof of the three $L_1$-estimates will be given in Subsection~\ref{sec-estimates-perturbed}.
In Subsection~\ref{sec-kernel-resolvent} we show a kernel estimate for the resolvents of the Dirichlet Laplacian on $(0,\infty)$.
This kernel estimate is used in Subsection~\ref{sec-estimates-exponential-double} to show that the operator norm $\|V(\lambda - A)^{-1}\|_{1\to1}$ is small, where $A$ is the Dirichlet Laplacian provided with the weights that are needed for the application of Corollary~\ref{kernel-bound-Schr-semi};
the smallness of the operator norm will enable us to apply the results of Subsection~\ref{sec-estimates-perturbed} to infer the three weighted $L_1$-estimates indicated above.

\subsection{$L_1$-estimates for perturbed $C_0$-semigroups}\label{sec-estimates-perturbed}

The basis for our $L_1$-estimates is the following general result for absorption semigroups on~$L_1$.

\begin{prop}\label{estimates-perturbed-neu}
Let $(\Omega,\mu)$ be a measure space. Let $T$ be a positive $C_0$-semigroup on $L_1(\mu)$ with generator $A$, and let $M \geqslant 1$, $\omega \in \R$ be such that  
\[
\|T(t)\|_{1 \rightarrow 1} \leqslant M e^{\omega t} \qquad (t \geqslant 0).
\]
Let $V\colon\Omega \rightarrow \R$ be measurable, and assume that there exist $\lambda > \omega,\ \alpha \in (0,1)$ such that 
\begin{equation}\label{Miyadera-Voigt-cond-2}
\|V(\lambda - A)^{-1}\|_{1 \rightarrow 1} \leqslant \alpha.
\end{equation}
Then $V$ is $T$-admissible,
\[
\|T_V(t)\|_{1 \rightarrow 1} \leqslant \frac{M}{1-\alpha}e^{\lambda t} \qquad (t \geqslant 0).
\]
\end{prop}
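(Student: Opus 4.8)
The plan is to use the classical Miyadera--Voigt perturbation theorem for $C_0$-semigroups on $L_1$. Condition~\eqref{Miyadera-Voigt-cond-2} is precisely a resolvent form of the Miyadera condition, and the positivity of $T$ together with $0 \le V^- \le |V|$ will let us handle the two signs of $V$ separately and then combine them via the results of Subsection~\ref{sec-admis}. First I would treat the case $V \ge 0$ (so that $A - V$ generates a subsemigroup of $T$): here the perturbation is ``negative,'' admissibility is automatic from the monotone limit $T_V(t) = \slim_n T_{V\wedge n}(t)$, and the only real content is the norm bound. For the case $V \le 0$ one perturbs by a positive potential $-V = |V|$, and admissibility is the substantive point; here \eqref{Miyadera-Voigt-cond-2} guarantees it.

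The key computation is a Laplace-transform/Dyson--Phillips estimate. For bounded $V_n := (V\wedge n)\vee(-n)$ (or, in the one-signed reductions, $V\wedge n$ resp.\ $V\vee(-n)$) one has the variation-of-constants identity, valid after Laplace transform in the form
\[
(\lambda - A + V_n)^{-1} = (\lambda - A)^{-1}\sum_{k=0}^\infty \bigl(-V_n(\lambda-A)^{-1}\bigr)^k
\]
for $\lambda > \omega$, the Neumann series converging in $\mathcal L(L_1(\mu))$ because $\|V_n(\lambda-A)^{-1}\|_{1\to1} \le \|V(\lambda-A)^{-1}\|_{1\to1} \le \alpha < 1$ (using $|V_n|\le|V|$ and positivity of $(\lambda-A)^{-1}$, which follows from positivity of $T$). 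Taking norms termwise and using $\|(\lambda-A)^{-1}\|_{1\to1}\le M/(\lambda-\omega)$ gives
\[
\bigl\|(\lambda - A + V_n)^{-1}\bigr\|_{1\to1} \le \frac{M}{(\lambda-\omega)(1-\alpha)}\,.
\]
The same bound holds with $\lambda$ replaced by any $\mu > \omega$ with $\|V(\mu-A)^{-1}\|_{1\to1}\le\alpha$; since this operator norm is nonincreasing in $\mu$ (again by positivity), it holds for all $\mu \ge \lambda$, and in fact one gets $\|V(\mu-A)^{-1}\|\le\alpha$ for all such $\mu$. Iterating the resolvent identity to control all powers $(\mu - A + V_n)^{-k}$ — or, more cleanly, estimating $\|(\mu-\lambda + A - V_n)^{-1}\|$ after the shift $A \mapsto A - \lambda$ so that the shifted generator satisfies a resolvent bound with constant $M/(1-\alpha)$ at the point $\mu - \lambda > 0$ — and invoking the Hille--Yosida theorem yields
\[
\|T_{V_n}(t)\|_{1\to1} \le \frac{M}{1-\alpha}\,e^{\lambda t}\qquad(t\ge0),
\]
with a constant \emph{uniform in $n$}. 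This uniform bound is the crucial point: it shows $\sup\{\|T_{V_n}(t)\|_{1\to1} : t\in[0,1],\ n\in\N\} < \infty$, so \cite[Prop.~2.2]{Vo88} gives $T$-admissibility of $\pm V^\pm$, hence of $V$, and then $T_{V_n}(t)\to T_V(t)$ strongly (by the definition of admissibility and \cite[Thm.~2.6]{Vo88}) lets us pass the bound to the limit.

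The main obstacle I anticipate is the bookkeeping around the Hille--Yosida argument for a genuine \emph{semigroup} estimate: the Neumann series directly controls only the first power of the resolvent, whereas the growth bound $\|T_{V_n}(t)\|\le \frac{M}{1-\alpha}e^{\lambda t}$ requires control of all powers $(\mu - A + V_n)^{-k}$ for $\mu > \lambda$. The clean way around this is to note that after shifting, the perturbed generator $A - V_n - \lambda$ satisfies, at every point $s > 0$, the resolvent bound $\|(s - (A-V_n-\lambda))^{-1}\| \le \frac{M}{(1-\alpha)s}$ — which is exactly the Hille--Yosida condition for generation of a semigroup of type $(\frac{M}{1-\alpha},0)$ — because the estimate $\|V(\mu-A)^{-1}\|\le\alpha$ persists for all $\mu\ge\lambda$ and the Neumann series argument applies verbatim at each such $\mu$. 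A secondary subtlety is making sure the ``real-valued potential'' bookkeeping matches the definitions of Subsection~\ref{sec-admis}: one applies the above to $V\wedge n$ (negative-direction, monotone, automatic admissibility) and to $V\vee(-n)$, intersects, and uses $T_{(V\wedge n)\vee(-n)} = (T_{V\wedge n})_{(-V^-)\wedge n}$-type identities together with Lemma~\ref{admis-sum}; alternatively one works directly with $V_n=(V\wedge n)\vee(-n)$ since $|V_n|\le|V|$ already suffices for the Neumann series.
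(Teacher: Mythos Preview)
Your proposal has a genuine gap at the Hille--Yosida step. You write that the bound
\[
\bigl\|(s - (A - V_n - \lambda))^{-1}\bigr\|_{1\to1} \leqslant \frac{M}{(1-\alpha)s} \qquad (s>0)
\]
``is exactly the Hille--Yosida condition for generation of a semigroup of type $\bigl(\frac{M}{1-\alpha},0\bigr)$.'' This is false unless $\frac{M}{1-\alpha}=1$: for a growth bound with constant $M'>1$ the Hille--Yosida theorem requires the estimate $\|(s-A')^{-k}\|\leqslant M'/s^k$ for \emph{all} $k\geqslant 1$, not just $k=1$. Your Neumann series
\[
(\mu - A + V_n)^{-1} = (\mu - A)^{-1}\sum_{k\geqslant 0}\bigl(-V_n(\mu - A)^{-1}\bigr)^k
\]
controls only the first power; taking $k$-th powers does not let you regroup the factors $(\mu-A)^{-1}$ (they do not commute with $(I+V_n(\mu-A)^{-1})^{-1}$), so you cannot exploit $\|(\mu-A)^{-k}\|\leqslant M/(\mu-\omega)^k$ to get the required higher-power bounds. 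You correctly flagged this as ``the main obstacle,'' but the resolution you propose does not work.

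The paper avoids Hille--Yosida entirely by working in the time domain. The crucial observation you are missing is that on $L_1$ with a \emph{positive} semigroup, the resolvent condition~\eqref{Miyadera-Voigt-cond-2} is equivalent to the integral Miyadera condition: for $0\leqslant f\in L_1(\mu)$,
\[
\int_0^\infty \|V_n\, e^{-\lambda t}T(t)f\|_1 \,dt
= \Bigl\| |V_n| \int_0^\infty e^{-\lambda t}T(t)f\,dt \Bigr\|_1
= \|V_n(\lambda-A)^{-1}f\|_1 \leqslant \alpha\|f\|_1,
\]
the first equality holding because $g\mapsto \int |V_n|\,g\,d\mu$ is a continuous linear functional and $T(t)f\geqslant0$. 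From this integral condition the time-domain Dyson--Phillips series (i.e.\ the result of \cite{Vo77}) yields the semigroup bound $\|e^{-\lambda t}T_{V_n}(t)\|_{1\to1}\leqslant M/(1-\alpha)$ directly, with the constant uniform in~$n$. After that the admissibility argument via \cite[Prop.~2.2, Prop.~3.3(b)]{Vo88} and the passage to the limit proceed essentially as you outlined.
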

\begin{proof}
By rescaling we can assume without loss of generality that $\omega = 0$.
For $n \in \N$ let $V_n:= (V \wedge n) \vee (-n) \ (\in L_{\infty}(\mu))$. 

(i) In the first step we show that 
\begin{equation}\label{bounded-L1-estimate}
\|e^{-\lambda t}T_{V_n}(t)\|_{1 \rightarrow 1} \leqslant \frac{M}{1-\alpha} \qquad (t \geqslant 0, \ n \in \N).  \end{equation}
For this, fix $n \in \N$ and note that $\|V_n(\lambda - A)^{-1}\|_{1 \rightarrow 1} \leqslant \alpha$ by hypothesis. Since $L_1(\mu) \ni f \mapsto \int_{\Omega} |V_n| f \,d\mu \in \K$ is a continuous linear functional, we have  
\[
\int_0^{\infty} \|V_n e^{-\lambda t} T(t)f\|_1 \,dt
= \left\|V_n \int_0^\infty e^{-\lambda t}T(t)f \,dt\right\|_1
= \|V_n(\lambda - A)^{-1}f\|_1 \leqslant \alpha \|f\|_1
\]
for all $0 \leqslant f \in L_1(\mu)$, so by the positivity of $T$ it follows that 
\[
\int_0^{\infty} \|V_n e^{-\lambda t} T(t)f\|_1 \,dt \leqslant \alpha \|f\|_1 \qquad (f \in L_1(\mu)).
\]
Thus, \cite[Thm.~1(c)]{Vo77} (applied with $B=-V_n$) implies~\eqref{bounded-L1-estimate}. 

(ii) Since~\eqref{Miyadera-Voigt-cond-2} holds with $-V^-$ and $-V^+$ in place of $V$, we conclude from step~(i) and~\cite[Prop.~2.2]{Vo88} that $-V^-$ and $-V^+$ are $T$-admissible. By~\cite[Prop.~3.3(b)]{Vo88} the latter implies that also $V^+$ is $T$-admissible, so that $V$ is $T$-admissible.
Now the asserted estimate follows from~\eqref{bounded-L1-estimate} since $T_{V_n}(t) \rightarrow T_V(t)$ strongly for all $t \geqslant 0$.
\end{proof}

We will apply the above proposition in the form of the next result on absorption semigroups on~$L_2$.

\begin{cor}\label{estimates-perturbed}
Let $(\Omega,\mu)$ be a measure space. Let $T$ be a positive $C_0$-semigroup on $L_2(\mu)$ with generator $A$, and assume that there exist $M \geqslant 1$, $\omega \in \R$ such that
\[
\|T(t)\|_{1 \rightarrow 1} \leqslant M e^{\omega t}, \qquad \|T(t)\|_{2 \rightarrow 2} \leqslant M e^{\omega t}.
\]
for all $t \geqslant 0$. Let $V\colon\Omega \rightarrow \R$ be $T$-admissible, and assume that there exist a sequence $(\lambda_k)_{k \in \N}$ in $(\omega ,\infty)$ converging to $\omega$ and $\alpha \in (0,1)$ such that 
\begin{equation}\label{Miyadera-Voigt-cond}
\|V(\lambda_k - A)^{-1}\|_{1 \rightarrow 1} \leqslant \alpha \qquad (k \in \N).
\end{equation}
Then 
\[
\|T_V(t)\|_{1 \rightarrow 1} \leqslant \frac{M}{1-\alpha}e^{\omega t} \qquad (t \geqslant 0).
\]
\end{cor}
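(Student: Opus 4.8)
The plan is to reduce the $L_2$-statement to the $L_1$-statement of Proposition~\ref{estimates-perturbed-neu} by observing that $T$ restricts to (or rather, that its restriction generates) a positive $C_0$-semigroup on $L_1(\mu)$. More precisely, since $\|T(t)\|_{1\to1} \leqslant Me^{\omega t}$ and $\|T(t)\|_{2\to2} \leqslant Me^{\omega t}$, the semigroup $T$ acts compatibly on $L_1(\mu)$ and $L_2(\mu)$; the restriction of $T$ to $L_1\cap L_2(\mu)$ extends by density to a positive $C_0$-semigroup $T_1$ on $L_1(\mu)$ with $\|T_1(t)\|_{1\to1}\leqslant Me^{\omega t}$. (Strong continuity on $L_1$ requires a short argument: for $f\in L_1\cap L_2(\mu)$ one has $T_1(t)f = T(t)f \to f$ in $L_2$, hence $T_1(t)f\to f$ in $L_1$ along a subsequence at least on sets of finite measure; the uniform bound then upgrades this to genuine strong continuity on all of $L_1$ by a standard density argument.) Let $A_1$ denote its generator. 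A routine check shows that for $\lambda>\omega$ the resolvent $(\lambda-A_1)^{-1}$ is the restriction of $(\lambda-A)^{-1}$ to $L_1\cap L_2(\mu)$, extended by continuity; this is because both are given by the Laplace transform $\int_0^\infty e^{-\lambda t}T(t)\,dt$ on the common dense domain.

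Next I would verify that the hypothesis \eqref{Miyadera-Voigt-cond} transfers to $A_1$. By hypothesis $\|V(\lambda_k-A)^{-1}\|_{1\to1}\leqslant\alpha$; since $(\lambda_k-A_1)^{-1}$ is the $L_1$-extension of $(\lambda_k-A)^{-1}$, the same bound $\|V(\lambda_k-A_1)^{-1}\|_{1\to1}\leqslant\alpha$ holds. Picking any fixed $\lambda_k>\omega$ (it suffices to have one such $\lambda$; the sequence is only needed because $\lambda$ must exceed $\omega$, and we want the final exponential rate to be $\omega$), Proposition~\ref{estimates-perturbed-neu} applied to $T_1$ yields that $V$ is $T_1$-admissible with
\[
\|(T_1)_V(t)\|_{1\to1} \leqslant \frac{M}{1-\alpha}e^{\lambda_k t} \qquad (t\geqslant0).
\]
Since this holds for every $k$, letting $k\to\infty$ (so $\lambda_k\to\omega$) gives $\|(T_1)_V(t)\|_{1\to1}\leqslant\frac{M}{1-\alpha}e^{\omega t}$ for all $t\geqslant0$.

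Finally I would identify $(T_1)_V$ with the restriction of $T_V$ to $L_1$. Because $V$ is assumed $T$-admissible on $L_2$, the semigroup $T_V$ is defined via the strong limit $T_V(t)=\slim_{n}T_{(V\wedge n)\vee(-n)}(t)$ on $L_2(\mu)$; on the $L_1$-side the same truncated semigroups $T_{V_n}$ (which are consistent on $L_1\cap L_2$, as $V_n$ is bounded) converge strongly on $L_1$ to $(T_1)_V(t)$ by the proof of Proposition~\ref{estimates-perturbed-neu}. Hence $T_V(t)$ and $(T_1)_V(t)$ agree on the dense subspace $L_1\cap L_2(\mu)$, so the $L_1$-bound for $(T_1)_V$ is exactly the assertion $\|T_V(t)\|_{1\to1}\leqslant\frac{M}{1-\alpha}e^{\omega t}$.

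The main obstacle is the bookkeeping around consistency of the two-scale picture: making sure that the $L_1$-semigroup $T_1$ really is strongly continuous, that its resolvent is the restriction of $(\lambda-A)^{-1}$, and—most delicately—that the perturbed semigroup $(T_1)_V$ constructed on $L_1$ coincides with the $L_2$-object $T_V$ restricted to $L_1\cap L_2$. Each of these is a standard interpolation/consistency argument, but one has to be careful that the truncations $V_n=(V\wedge n)\vee(-n)$ produce the same bounded perturbation semigroup on both $L_1$ and $L_2$ (true since $V_n\in L_\infty$ and the Trotter product formula gives the same answer on the common dense subspace), and that the strong limits defining admissibility are taken in the right space. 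The $L_2$-bound $\|T(t)\|_{2\to2}\leqslant Me^{\omega t}$ in the hypothesis is used precisely to guarantee that $T$ leaves $L_1\cap L_2$ invariant in a compatible way and thus that this two-scale setup is legitimate; no further use of the $L_2$-bound is needed, since the final estimate is purely an $L_1$-estimate.
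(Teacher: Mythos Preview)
Your proposal is correct and follows essentially the same route as the paper: extend $T$ to a positive $C_0$-semigroup $T_1$ on $L_1(\mu)$, apply Proposition~\ref{estimates-perturbed-neu} there, let $\lambda_k\to\omega$, and identify the $L_1$- and $L_2$-perturbed semigroups on $L_1\cap L_2$. The only organisational difference is that the paper applies Proposition~\ref{estimates-perturbed-neu} to the truncations $V_n=(V\wedge n)\vee(-n)$ rather than to $V$ directly; this makes the consistency step $(T_1)_{V_n}|_{L_1\cap L_2}=T_{V_n}|_{L_1\cap L_2}$ a citation (\cite[Prop.~3.1(a)]{Vo86}) and postpones the passage to unbounded $V$ to a single strong limit $n\to\infty$ at the very end, whereas you invoke Proposition~\ref{estimates-perturbed-neu} for $V$ itself and then argue consistency of $(T_1)_V$ with $T_V$ via the truncations afterwards. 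The paper also delegates the strong continuity of $T_1$ to \cite[Thm.~7]{Vo92} rather than sketching it.
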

\begin{proof}
For every $t \geqslant 0$ the operator $T(t)|_{L_1 \cap L_2(\mu)}$ extends to a bounded operator $T_1(t)$ on $L_1(\mu)$ with $\|T_1(t)\|_{1 \rightarrow 1} \leqslant Me^{\omega t}$,
and the mapping $T_1 \colon [0,\infty) \to \mathcal{L}(L_1(\mu))$ thus defined is a positive $C_0$-semigroup by \cite[Thm.~7]{Vo92}.
Since~\eqref{Miyadera-Voigt-cond} holds for $V_n:=(V \wedge n) \vee (-n)$ in place of $V$, it follows from Proposition~\ref{estimates-perturbed-neu} that 
\[
\|(T_1)_{V_n}(t)\|_{1 \rightarrow 1} \leqslant \frac{M}{1 - \alpha} e^{\lambda_k t} \qquad (t \geqslant 0,\ k \in \N)
\]
for all $n \in \N$. Letting $k \to \infty$ we obtain 
\[
\|(T_1)_{V_n}(t)\|_{1 \rightarrow 1} \leqslant \frac{M}{1 - \alpha} e^{\omega t} \qquad (t \geqslant 0, \ n \in \N).
\]
Since $(T_1)_{V_n}(t)|_{L_1 \cap L_2(\mu)} = T_{V_n}(t)|_{L_1 \cap L_2(\mu)}$ for all $n \in \N$ by \cite[Prop.~3.1(a)]{Vo86},
the assertion follows by letting $n\to\infty$ in the last inequality.
\end{proof}

The final result of this subsection, a version of \cite[Proposition~4.6]{Vo86}, will be needed for the proof of Theorem~\ref{thm-necessary}, a kind of converse to Theorem~\ref{thm-main}. It is more or less known; we include a proof for the reader's convenience.

\begin{prop}\label{Global-Kato}
Let $(\Omega,\mu)$ be a measure space. Let $T$ be a positive $C_0$-semigroup on~$L_1(\mu)$, and let $V\colon\Omega \rightarrow (-\infty,0]$ be measurable.
Assume $T$ is stochastic, that $V$ is $T$-admissible and that $M := \sup_{t\geqslant0}\|T_V(t)\|_{1 \to 1} < \infty$.
Then $V$ satisfies the global Kato class condition
\[
\int_0^{\infty} \|VT(t)f\|_1 \,dt \leqslant (M-1)\|f\|_1 \qquad (f\in L_1(\mu)).
\]
\end{prop}
\begin{proof}
Since $T$ is a positive semigroup, it suffices to show the estimate for $f\in L_1(\mu)_+$.
Let $t>0$, $n\in\N$ and $V_n := V\vee(-n)$. By Duhamel's formula (see, e.g., \cite[Cor.~III.1.7]{EnNa00}) we have
\[
T_V(t)f \geqslant T_{V_n}(t)f = T(t)f - \int_0^t T(t-s)V_nT_{V_n}(s)f\,ds.
\]
Since $V_n\leqslant0$ and $T$ is stochastic, it follows that
\[
\|T_V(t)f\|_1 - \|f\|_1 \geqslant \int_0^t \|V_nT_{V_n}(s)f\|_1\,ds \geqslant \int_0^t \|V_nT(s)f\|_1\,ds.
\]
Letting $n\to\infty$ and $t\to\infty$ we obtain
$\int_0^\infty \|VT(s)f\|_1\,ds \leqslant (M-1)\|f\|_1$.
\end{proof}

The above argument even shows that $\int_0^{\infty} \|VT_V(t)f\|_1 \,dt \leqslant (M-1)\|f\|_1$, but we will not need this stronger estimate.

\subsection{Integral kernel for the resolvents of the Dirichlet Laplacian on the positive real axis}\label{sec-kernel-resolvent}

In this subsection let $\DL$ be the Dirichlet Laplacian on $(0,\infty)$ and $T$ the generated $C_0$-semigroup on $L_2(0,\infty)$.
We show that for $\lambda > 0$ the integral kernel of the resolvent $(\lambda - \DL)^{-1}$ is given by the Green function $G_{\lambda}\colon (0,\infty) \times (0,\infty) \rightarrow (0,\infty)$,
\[
G_{\lambda}(x,y):= \frac{1}{2\sqrt{\lambda}}e^{-\sqrt{\lambda}|x-y|}\left( 1-e^{-2\sqrt{\lambda}(x \wedge y)} \right).
\]
First note that
$
(\lambda - \DL)^{-1} = \int_0^{\infty}e^{-\lambda t}T(t) \,dt
$
(strong integral), and recall from~\eqref{halfspace-kernel} that for every $t > 0$ the operator $T(t)$ has the integral kernel $k_t\colon(0,\infty) \times (0,\infty) \rightarrow (0,\infty)$ defined by
\begin{equation}\label{kt1}
k_t(x,y):= (4\pi t)^{-1/2}e^{-|x-y|^2/4t} \left(1 - e^{-xy/t} \right) \qquad (x,y \in (0,\infty));
\end{equation}
thus,
\[
(\lambda - \DL)^{-1}f(x) = \int_0^{\infty} \!\! \int_0^{\infty} e^{-\lambda t} k_t(x,y) \,dt f(y) \dy \qquad (\text{a.e. } x \in (0,\infty))
\]
for all $f \in L_2(0,\infty)$ and all $\lambda > 0$.
To conclude that $G_\lambda$ is the integral kernel of $(\lambda - \DL)^{-1}$, we now show that $(0,\infty) \ni \lambda \mapsto G_{\lambda}(x,y) \in (0,\infty)$ is the Laplace transform of the function $(0,\infty) \ni t \mapsto k_t(x,y) \in (0,\infty)$, for every $(x,y) \in (0,\infty) \times (0,\infty)$.

\begin{lem}
Let $\lambda,x,y \in (0,\infty)$. Then
\[
G_{\lambda}(x,y) = \int_0^{\infty} e^{-\lambda t} k_t(x,y) \,dt.
\]
\end{lem}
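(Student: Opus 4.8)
The plan is to verify the claimed Laplace transform identity by a direct computation, reducing it to a known one-dimensional Laplace transform of the heat kernel. First I would recall the standard formula
\[
\int_0^\infty e^{-\lambda t} (4\pi t)^{-1/2} e^{-a^2/4t} \, dt = \frac{1}{2\sqrt\lambda} e^{-\sqrt\lambda\,|a|} \qquad (\lambda>0,\ a\in\R),
\]
which can be obtained, e.g., by substituting $s = 1/t$ or by differentiating under the integral sign, or simply cited as a well-known fact. Using the expression~\eqref{kt1}, namely
\[
k_t(x,y) = (4\pi t)^{-1/2} e^{-(x-y)^2/4t} - (4\pi t)^{-1/2} e^{-(x-y)^2/4t} e^{-xy/t},
\]
I would note that the exponent in the second term combines to $-(x-y)^2/4t - xy/t = -\bigl((x-y)^2 + 4xy\bigr)/4t = -(x+y)^2/4t$. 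Hence the integrand splits as the difference of two Gaussians with parameters $a = |x-y| = x-y$ (well, $|x-y|$) and $a = x+y$ respectively.

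Applying the standard formula termwise — the termwise integration being justified since both integrands are nonnegative (so monotone convergence / Tonelli applies, or one simply notes $0 \le k_t(x,y) \le (4\pi t)^{-1/2} e^{-(x-y)^2/4t}$, which is integrable against $e^{-\lambda t}$) — I obtain
\[
\int_0^\infty e^{-\lambda t} k_t(x,y)\, dt = \frac{1}{2\sqrt\lambda} e^{-\sqrt\lambda\,|x-y|} - \frac{1}{2\sqrt\lambda} e^{-\sqrt\lambda\,(x+y)}.
\]
It then remains to check that this equals $G_\lambda(x,y) = \frac{1}{2\sqrt\lambda} e^{-\sqrt\lambda\,|x-y|}\bigl(1 - e^{-2\sqrt\lambda\,(x\wedge y)}\bigr)$, i.e., that $e^{-\sqrt\lambda\,(x+y)} = e^{-\sqrt\lambda\,|x-y|} e^{-2\sqrt\lambda\,(x\wedge y)}$. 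This is immediate from the elementary identity $x+y = |x-y| + 2(x\wedge y)$.

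I do not expect any real obstacle here: the only mildly delicate point is the justification of interchanging the integral with the difference of the two exponential terms and the invocation of the classical heat-kernel Laplace transform, but the positivity of $k_t$ (guaranteed by $1 - e^{-xy/t} \ge 0$) makes the termwise integration routine. In the write-up I would keep the computation compact: state the standard Laplace transform of the one-dimensional Gaussian, rewrite $k_t(x,y)$ as the difference of the two Gaussians indexed by $|x-y|$ and $x+y$, integrate, and finish with the algebraic identity $x+y = |x-y| + 2(x\wedge y)$.
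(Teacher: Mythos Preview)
Your proposal is correct and follows essentially the same route as the paper: rewrite $k_t(x,y)$ as the difference of the two Gaussians with parameters $|x-y|$ and $x+y$, apply the Laplace transform identity $\int_0^\infty e^{-\lambda t}(4\pi t)^{-1/2}e^{-a^2/4t}\,dt = \tfrac{1}{2\sqrt\lambda}e^{-\sqrt\lambda\,|a|}$ to each, and finish with $x+y=|x-y|+2(x\wedge y)$. The only difference is that the paper derives that Laplace transform identity from scratch via an explicit antiderivative, whereas you cite it as standard.
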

\begin{proof}
We observe that
\begin{align}
\int_0^{\infty} e^{-\lambda t} k_t(x,y) \,dt &= \int_0^{\infty} e^{-\lambda t} (4 \pi t)^{-1/2} e^{-(x-y)^2/4t} \bigl(1 - e^{-xy/t} \bigr) \, dt \nonumber \\
&= \frac{1}{2 \sqrt{\pi}} \left( \int_0^{\infty} t^{-1/2} e^{-(x-y)^2/4t - \lambda t}\,dt - \int_0^{\infty} t^{-1/2} e^{-(x+y)^2/4t - \lambda t} \,dt \right).  \label{kernel-heat-computation}
\end{align}

To compute the two integrals in the right hand side, let $r>0$, $G(t) := -\int_t^\infty e^{-s^2} \,ds$ ($t \in \R$) and 
\[
F(t) := e^{r\sqrt{\lambda}}\mkern1.5mu\mathopen G\left(\sqrt{\lambda t} + \frac{r}{2 \sqrt{t}}\right)
+ e^{-r\sqrt{\lambda}}\mkern1.5mu\mathopen G\left(\sqrt{\lambda t} - \frac{r}{2 \sqrt{t}}\right) \qquad (t > 0).
\]
Using $G'(t) = e^{-t^2}$ $(t \in \R$), one easily verifies that
\[
F'(t) = \sqrt\lambda \, t^{-1/2} e^{-r^2/4t - \lambda t} \qquad (t >0).
\]
Moreover, since $G(t) \rightarrow 0$ as $t \rightarrow \infty$ and $G(t) \rightarrow -\sqrt\pi$ as $t \rightarrow -\infty$, we have
\[
F(t) \rightarrow 0 \quad (t \rightarrow \infty), \qquad  F(t) \rightarrow -\sqrt{\pi} \, e^{-r\sqrt{\lambda}} \quad (t \rightarrow 0^+).
\]
Hence,
\[
\int_0^{\infty} t^{-1/2} e^{-r^2/4t - \lambda t} \,dt = \frac{1}{\sqrt\lambda} \left( \mkern1mu \lim_{t\to\infty} F(t) - \lim_{t\to0} F(t) \right) = \sqrt{\frac{\pi}{\lambda}}\,e^{-r\sqrt{\lambda}}.
\]
Note that this identity also holds for $r=0$.
Using~\eqref{kernel-heat-computation}, we conclude that 
\begin{align*}
\int_0^{\infty} e^{-\lambda t} k_t(x,y) \,dt &= \frac{1}{2\sqrt{\pi}} \cdot \sqrt{\frac{\pi}{\lambda}} \left( e^{-\sqrt{\lambda}|x-y|} - e^{-\sqrt{\lambda}(x+y)}\right) \\ 
&= \frac{1}{2\sqrt{\lambda}}e^{-\sqrt{\lambda}|x-y|} \left(1-e^{-\sqrt{\lambda}(x+y - |x-y|)} \right)
\\&= \frac{1}{2\sqrt{\lambda}}e^{-\sqrt{\lambda}|x-y|} \left(1-e^{-2\sqrt{\lambda}(x \wedge y)} \right) = G_{\lambda}(x,y). \tag*{\qedhere}
\end{align*}
\end{proof}

Using the elementary inequality $1 - e^{-r} \leqslant r \ (r \geqslant 0)$ we obtain the following estimate for the Green function $G_{\lambda}$ that will be crucial in the next subsection:
\begin{equation}\label{kernel-resolvent-estimate}
G_{\lambda}(x,y) \leqslant (x \wedge y) e^{-\sqrt{\lambda}|x-y|} \qquad \bigl(\lambda>0,\ x,y \in (0,\infty)\bigr).
\end{equation}

\subsection{$L_1$-estimates for weighted Schrö\-dinger semigroups on $(0,\infty)$}\label{sec-estimates-exponential-double}

Let again $\DL$ be the Dirichlet Laplacian on $(0,\infty)$ and $T$ the generated $C_0$-semigroup on $L_2(0,\infty)$.
Let $V\colon(0,\infty) \rightarrow \R$ be measurable and assume that there exists $\alpha \in (0,1)$ such that  
\begin{equation}\label{int-cond2}
\int_0^{\infty} x|V(x)| \dx \leqslant \alpha. 
\end{equation}

Our first goal is to show $L_1$-estimates for the Schrödinger semigroup $T_V$ with exponential weights $\rho_{\xi}\colon(0,\infty) \rightarrow (0,\infty)$ defined by $\rho_{\xi}(x):=e^{\xi \cdot x} \ (x > 0 )$, for all $\xi \in \R$.
We achieve this by first showing that \eqref{int-cond2} implies an estimate of the form \eqref{Miyadera-Voigt-cond}
(see Proposition~\ref{Miyadera-Voigt-cond-exponential}(a) below) and then applying Corollary~\ref{estimates-perturbed} to the semigroup $T^\xi$ defined as follows, for $\xi \in \R$: due to \eqref{Txi-est}, $\rho_{\xi}T(t)\rho_{\xi}^{-1}$ extends to a bounded operator $T^{\xi}(t)$ on~$L_2(0,\infty)$ for all $t\geqslant0$, and by Proposition~\ref{weighted-scontinuity} the family $(T^{\xi}(t))_{t\geqslant0}$ thus defined is a (positive) $C_0$-semigroup on $L_2(0,\infty)$. We denote by $\Delta_{\textnormal D,\xi}$ the generator of $T^{\xi}$.

The second goal is to prove $L_1$-estimates for $T_V$ with exponential weight $\rho_{\xi}$ and weight $m\colon(0,\infty) \rightarrow (0,\infty)$ defined by $m(x):= x \ (x > 0)$ on the measure space $((0,\infty),m^2\lambda)$. We will show this in a manner similar to the first goal from above,
working with the unitarily transformed semigroup $T^{\xi,m}$ on $L_2(\Omega_0,m^2\lambda)$ defined by 
\[
T^{\xi,m}(t)f:=m^{-1}T^{\xi}(t)mf \qquad (t \geqslant 0, \ f \in L_2(m^2\lambda)).
\]
Note that $T^{\xi,m}$ is a positive $C_0$-semigroup satisfying 
\begin{equation}\label{l2-estimate-exp-m}
\|T^{\xi,m}(t)\|_{L_2(m^2\lambda) \to L_2(m^2\lambda)} \leqslant e^{\xi^2t} \qquad (t \geqslant 0)
\end{equation}
since $L_2(m^2\lambda) \ni f \mapsto mf \in L_2(0,\infty)$ is an isometric lattice isomorphism and $T^{\xi}$ is positive and satisfies \eqref{Txi-est}. We denote by $\Delta_{\textnormal D,\xi}^m$ the generator of $T^{\xi,m}$.

\begin{prop}\label{Miyadera-Voigt-cond-exponential}
Let \eqref{int-cond2} be true, and let $\xi \in \R$. Then 

\rma $\|V(\mu - \Delta_{\textnormal D,\xi})^{-1}\|_{1 \rightarrow 1} \leqslant \alpha$ for all $\mu > \xi^2$, and 

\rmb $\|V(\mu - \Delta_{\textnormal D,\xi}^m)^{-1}\|_{L_1(m\lambda^2) \to L_1(m\lambda^2)} \leqslant \alpha$ for all $\mu > \xi^2$.
\end{prop}
\begin{proof}
We assume without loss of generality that $V \in L_{\infty}(0,\infty)$. (This can be done since $V_n:= |V| \wedge n \in L_{\infty}(0,\infty)$ satisfies~\eqref{int-cond2} for every $n \in \N$ and, if the assertions of (a) and~(b) hold with $V_n$ in place of~$V$, then they also hold for $V$ by monotone convergence.)

(a) Let $D := \{L_1\cap L_2(0,\infty) \setcol \rho_\xi^{-1}f\in L_2(0,\infty)\}$, and let $f\in D$.
Then $(\mu - \Delta_{\textnormal D,\xi})^{-1}f = \rho_{\xi}(\mu - \DL)^{-1}\rho_{\xi}^{-1}f$ since $(\mu - \Delta_{\textnormal D,\xi})^{-1}$ is the Laplace transform of $T^{\xi}$.
Thus, using the estimate~\eqref{kernel-resolvent-estimate} for the integral kernel $G_{\mu}$ of $(\mu - \DL)^{-1}$ and the assumption~\eqref{int-cond2}, we estimate
\begin{align*}
\|V(\mu - \Delta_{\textnormal D,\xi})^{-1}f\|_1 &= \int_0^{\infty} |V\rho_{\xi}(\mu - \DL)^{-1} \rho_{\xi}^{-1}f|(x) \,dx \\ 
&= \int_0^{\infty} |V(x)| e^{\xi x} \int_0^{\infty} G_{\mu}(x,y) e^{-\xi y} |f(y)| \,dy \,dx \\
&\leqslant \int_0^{\infty} |f(y)| \int_0^{\infty} e^{\xi(x-y)} e^{-\sqrt{\mu}|x - y|}(x \wedge y)|V(x)| \,dx \,dy
\\ 
&\leqslant \int_0^{\infty} |f(y)| \int_0^{\infty} x |V(x)| \,dx \,dy \leqslant \alpha \|f\|_1\,.
\end{align*}
Now the assertion of (a) follows since $D$ is dense in $L_1\cap L_2(0,\infty)$.

(b) Let $D := \{L_1\cap L_2(m^2\lambda) \setcol \rho_\xi^{-1}f\in L_2(m^2\lambda)\}$, and let $f\in D$.
Then $(\mu - \Delta_{\textnormal D,\xi}^m)^{-1}f = m^{-1}\rho_{\xi}(\mu - \DL)^{-1}\rho_{\xi}^{-1}mf$,
and similarly as in the proof of~(a) we estimate
\begin{align*}
\|V(\mu - \Delta_{\textnormal D,\xi}^m)^{-1}f\|_{L_1(m^2\lambda)}
&= \int_0^{\infty} |Vm^{-1} \rho_{\xi}(\mu - \DL)^{-1}\rho_{\xi}^{-1}m f|(x) \,d(m^2\lambda)(x) \\ 
&= \int_0^{\infty} |V(x)| \frac{1}{x}e^{\xi x} \int_0^{\infty} G_{\mu}(x,y) e^{-\xi y} y |f(y)| \,dy \, d(m^2\lambda)(x) \\
&\leqslant \int_0^{\infty} |f(y)| \int_0^{\infty} \frac{x}{y}e^{\xi(x-y)} e^{-\sqrt{\mu}|x - y|}(x \wedge y) |V(x)| \,dx \,d(m^2\lambda)(y) \\
&\leqslant \int_0^{\infty} |f(y)| \int_0^{\infty} x |V(x)| \,dx \,d(m^2\lambda)(y) \leqslant \alpha \|f\|_{L_1(m^2\lambda)}\,.
\end{align*}
Now the assertion of (b) follows since $D$ is dense in $L_1\cap L_2(m\lambda^2)$.
\end{proof}

\begin{rem}\label{global-Kato}
The assertions of Proposition~\ref{Miyadera-Voigt-cond-exponential} have a kind of converse: below we will show that
\[
\int_0^{\infty} x|V(x)| \dx
= \lim_{\mu\to0+}\|V(\mu - \Delta_{\textnormal D})^{-1}\|_{1 \rightarrow 1}
= \lim_{\mu\to0+}\|V(\mu - \Delta_{\textnormal D}^m)^{-1}\|_{L_1(m^2\lambda) \rightarrow L_1(m^2\lambda)};
\]
since $\lim_{\mu\to0+}\|V(\mu - \Delta_{\textnormal D})^{-1}f\|_1 = \int_0^\infty \|VT(t)f\|_1 \,dt$ for all $f\in L_1\cap L_2(0,\infty)_+$ (cf.\ the proof of Proposition~\ref{estimates-perturbed-neu}), it then follows that \eqref{int-cond2} is equivalent to the global Kato class condition
\[
\int_0^\infty \|VT(t)f\|_1 \,dt \leqslant \alpha\|f\|_1 \qquad (f\in L_1\cap L_2(0,\infty)).
\]
In the same way one sees that \eqref{int-cond2} is true if and only if
\[
\int_0^\infty \|VT^m(t)f\|_{L_1(m^2\lambda)} \,dt \leqslant \alpha\|f\|_{L_1(m^2\lambda)} \qquad (f\in L_1\cap L_2(m^2\lambda)).
\]
Remarkably, \eqref{int-cond2} is thus equivalent to both an unweighted and a weighted global Kato class condition on~$V$.

Inequality $\lim_{\mu\to0+}\|V(\mu - \Delta_{\textnormal D})^{-1}\|_{1 \rightarrow 1} \leqslant \int_0^{\infty} x|V(x)| \dx$ holds by the proof of Proposition~\ref{Miyadera-Voigt-cond-exponential}(a).
For the converse inequality let $n\in\N$ and observe that $G_\mu(x,y) \uparrow x\wedge y$ as $\mu\downarrow0$, for all $x,y>0$.
Then by the monotone convergence theorem and a computation as in the proof of Proposition~\ref{Miyadera-Voigt-cond-exponential} one sees that
\[
\lim_{\mu\to0+}\|V(\mu - \Delta_{\textnormal D})^{-1}f\|_1 = \int_0^\infty f(y) \int_0^\infty (x\wedge y)|V(x)| \dx \dy \geqslant \|f\|_1 \int_0^\infty (x\wedge n)|V(x)| \dx
\]
for all $f\in L_1\cap L_2(0,\infty)_+$ with $\operatorname{spt} f \subseteq (n,\infty)$.
Therefore $\lim_{\mu\to0+}\|V(\mu - \Delta_{\textnormal D})^{-1}\|_{1 \to 1} \geqslant \int_0^\infty (x\wedge n)|V(x)| \dx$, and letting $n\to\infty$ one obtains the desired converse inequality.

Noting that $\frac{x}{y} G_\mu(x,y) \uparrow \frac{x^2}{y}\wedge x$ as $\mu\downarrow0$ one shows the second equality $\int_0^{\infty} x|V(x)| \dx
= \lim_{\mu\to0+}\|V(\mu - \Delta_{\textnormal D}^m)^{-1}\|_{L_1(m\lambda^2) \rightarrow L_1(m\lambda^2)}$ in a similar way, now using $f$ with $\operatorname{spt} f \subseteq (0,1/n)$.
\end{rem}

Using Proposition~\ref{Miyadera-Voigt-cond-exponential}(a) with $\xi=0$, we now show that \eqref{int-cond2} implies the form smallness condition~\eqref{form-small-dirichlet}.

\begin{prop}\label{ic-impl-fs} 
Let \eqref{int-cond2} be true. Then 
\begin{equation}\label{form-small-dirichlet2}
\int_0^{\infty} |V||u|^2 \,dx \leqslant \alpha \langle -\DL u,u \rangle \qquad (u \in \dom(\DL)).        
\end{equation}
\end{prop}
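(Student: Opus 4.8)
The plan is to derive from Lemma~\ref{Miyadera-Voigt-cond-exponential} (with $\xi=0$) the operator norm bound
\[
  \|V_n^{1/2}(\lambda-\DL)^{-1}V_n^{1/2}\|_{2\to2}\leqslant\alpha \qquad(\lambda>0),
\]
where $V_n:=|V|\wedge n$, and then to pass to the limits $\lambda\downarrow0$ and $n\to\infty$. First note that, for $\lambda>0$, the operator $(\lambda-\DL)^{-1/2}$ is bounded and self-adjoint, so $V_n^{1/2}(\lambda-\DL)^{-1}V_n^{1/2}$ factorises as $\bigl(V_n^{1/2}(\lambda-\DL)^{-1/2}\bigr)\bigl(V_n^{1/2}(\lambda-\DL)^{-1/2}\bigr)^*$; hence the displayed bound is equivalent to $\|V_n^{1/2}(\lambda-\DL)^{-1/2}\|_{2\to2}\leqslant\sqrt{\alpha}$. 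Since $\dom(\DL)\subseteq\dom\bigl((\lambda-\DL)^{1/2}\bigr)$ and $u=(\lambda-\DL)^{-1/2}(\lambda-\DL)^{1/2}u$ for $u\in\dom(\DL)$, it then follows that
\[
  \int_0^\infty V_n|u|^2\dx = \bigl\|V_n^{1/2}(\lambda-\DL)^{-1/2}(\lambda-\DL)^{1/2}u\bigr\|_2^2 \leqslant \alpha\bigl\|(\lambda-\DL)^{1/2}u\bigr\|_2^2 = \alpha\bigl(\lambda\|u\|_2^2+\langle-\DL u,u\rangle\bigr)
\]
for all $u\in\dom(\DL)$; letting $\lambda\downarrow0$ gives $\int_0^\infty V_n|u|^2\dx\leqslant\alpha\langle-\DL u,u\rangle$, and monotone convergence ($V_n\uparrow|V|$) then yields~\eqref{form-small-dirichlet2}. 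Replacing $V$ by the bounded potential $V_n$, which again satisfies~\eqref{int-cond2}, only serves to make the operators above literally bounded and to ensure $\int_0^\infty V_n|u|^2\dx<\infty$ from the outset.

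It remains to prove the operator bound for $V_n^{1/2}(\lambda-\DL)^{-1}V_n^{1/2}$. Recall from Subsection~\ref{sec-kernel-resolvent} that $(\lambda-\DL)^{-1}$ has the symmetric integral kernel $G_\lambda$; since $V_n$ is bounded, $V_n^{1/2}(\lambda-\DL)^{-1}V_n^{1/2}$ is the integral operator with kernel $K_\lambda(x,y):=V_n(x)^{1/2}G_\lambda(x,y)V_n(y)^{1/2}$. Lemma~\ref{Miyadera-Voigt-cond-exponential} with $\xi=0$, applied with $V_n$ in place of $V$, gives $\|V_n(\lambda-\DL)^{-1}\|_{1\to1}\leqslant\alpha$, which by Tonelli's theorem and positivity of the kernel means precisely that $\int_0^\infty V_n(x)G_\lambda(x,y)\dx\leqslant\alpha$ for a.e.\ $y$, and, by symmetry of $G_\lambda$, that $\int_0^\infty V_n(y)G_\lambda(x,y)\dy\leqslant\alpha$ for a.e.\ $x$. (Both bounds also follow at once from~\eqref{kernel-resolvent-estimate} via $G_\lambda(x,y)\leqslant x\wedge y$.) Now I would apply a Schur test for $K_\lambda$ with the weight $p:=V_n^{-1/2}$ on $[V_n>0]$: for $f,g\in L_2(0,\infty)$, applying Cauchy--Schwarz to $\int_0^\infty\int_0^\infty K_\lambda(x,y)|f(y)|\,|g(x)|\dx\dy$ with the splitting $|f(y)|\,|g(x)|=\bigl(|f(y)|\,p(y)^{1/2}p(x)^{-1/2}\bigr)\bigl(|g(x)|\,p(x)^{1/2}p(y)^{-1/2}\bigr)$ makes the powers of $V_n$ cancel, so that the two resulting factors are $\bigl(\int_0^\infty|f(y)|^2\bigl(\int_0^\infty V_n(x)G_\lambda(x,y)\dx\bigr)\dy\bigr)^{1/2}$ and $\bigl(\int_0^\infty|g(x)|^2\bigl(\int_0^\infty V_n(y)G_\lambda(x,y)\dy\bigr)\dx\bigr)^{1/2}$, bounded by $\sqrt{\alpha}\,\|f\|_2$ and $\sqrt{\alpha}\,\|g\|_2$ respectively. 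Hence $\|V_n^{1/2}(\lambda-\DL)^{-1}V_n^{1/2}\|_{2\to2}\leqslant\alpha$.

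The only delicate point is this last Schur-type estimate, and within it the choice of weight: the bound furnished by Lemma~\ref{Miyadera-Voigt-cond-exponential} carries the potential \emph{inside} the integral, so the Schur weight must be taken proportional to $V_n^{-1/2}$ in order for Cauchy--Schwarz to reproduce exactly the integrals $\int_0^\infty V_n(x)G_\lambda(x,y)\dx$ and $\int_0^\infty V_n(y)G_\lambda(x,y)\dy$; a naive weighted Cauchy--Schwarz applied directly to $u=(\lambda-\DL)^{-1}h$ loses a factor of $\lambda$ and fails. Everything else — the factorisation and the limits $\lambda\downarrow0$, $n\to\infty$ — is routine. (We remark in passing that in this one-dimensional situation~\eqref{form-small-dirichlet2} also follows by an elementary estimate: each $u\in\dom(\DL)\subseteq H_0^1(0,\infty)$ satisfies $|u(x)|^2=\bigl|\int_0^x u'\bigr|^2\leqslant x\int_0^\infty|u'|^2\dx=x\langle-\DL u,u\rangle$, whence $\int_0^\infty|V|\,|u|^2\dx\leqslant\langle-\DL u,u\rangle\int_0^\infty x|V(x)|\dx\leqslant\alpha\langle-\DL u,u\rangle$.)
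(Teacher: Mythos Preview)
Your proof is correct but takes a genuinely different route from the paper's. The paper proceeds indirectly through semigroup bounds: from Lemma~\ref{Miyadera-Voigt-cond-exponential} with $\xi=0$ it obtains $\|p|V|(\lambda-\DL)^{-1}\|_{1\to1}\leqslant p\alpha$ for $p\in(1,1/\alpha)$, applies Corollary~\ref{estimates-perturbed} to deduce a uniform $L_1$-bound for $T_{-p|V|}$, passes to $L_\infty$ by duality and then to $L_2$ by Riesz--Thorin interpolation, concludes that $\DL+p|V|$ is dissipative, and finally lets $p\to1/\alpha$. Your Schur-test argument is more direct: it converts the $L_1$ resolvent bound straight into the $L_2$ estimate $\|V_n^{1/2}(\lambda-\DL)^{-1}V_n^{1/2}\|_{2\to2}\leqslant\alpha$ without invoking any semigroup perturbation machinery, and it delivers the constant~$\alpha$ immediately rather than via an auxiliary limit in~$p$. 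The paper's detour has the merit of reusing Corollary~\ref{estimates-perturbed}, which is needed elsewhere anyway; your approach is more self-contained for this particular proposition. The elementary one-line argument you record at the end, based on $|u(x)|^2\leqslant x\|u'\|_2^2$ for $u\in H_0^1(0,\infty)$, is the shortest of all and is specific to the half-line; the paper does not mention it.
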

\begin{proof}
As in the proof of Proposition~\ref{Miyadera-Voigt-cond-exponential} we assume without loss of generality that $V \in L_{\infty}(0,\infty)$.

Let $p \in (1,1/\alpha)$. Then
$\|\mathopen{-p|V|(\lambda - \DL)^{-1}}\|_{1 \rightarrow 1} \leqslant p\alpha$ for all $\lambda > 0$ by Proposition~\ref{Miyadera-Voigt-cond-exponential}(a). Therefore, since $T$ is contractive and $L_1$-contractive,
\[
\|T_{-p|V|}(t)\|_{1 \rightarrow 1} \leqslant \frac{1}{1-p\alpha} \qquad (t \geqslant 0)
\]
by Corollary~\ref{estimates-perturbed}. Using duality and the self-adjointness of $T_{-p|V|}$, we also get the estimate 
\[
\|T_{-p|V|}(t)\|_{\infty \rightarrow \infty} \leqslant \frac{1}{1-p\alpha} \qquad (t \geqslant 0).
\]
The Riesz-Thorin interpolation theorem now implies that $\|T_{-p|V|}(t)\|_{2 \rightarrow 2} \leqslant 1/(1-p\alpha)$ for all $t \geqslant 0$. Thus $-(\DL + p|V|)$ is accretive, i.e.,
\[
\int_0^{\infty} p|V||u|^2 \,dx \leqslant \langle -\DL u,u  \rangle \qquad (u \in \dom(\DL)).  
\]
Letting $p \rightarrow 1/\alpha$ we obtain the asserted estimate~\eqref{form-small-dirichlet2}.
\end{proof}

Now
we are ready to prove the $L_1$-estimates mentioned at the beginning of this section. We start with the exponentially weighted $L_1$-estimate for Schrödinger semigroups on $(0,\infty)$. 
Note that $V$ is $T$-admissible by Proposition~\ref{ic-impl-fs} and Proposition~\ref{admissibility}.
\begin{prop}\label{l1-estimate-exponential}
Let \eqref{int-cond2} be true. Let $\xi \in \R$. Then
\[
\|\rho_{\xi}T_V(t)\rho_{\xi}^{-1}\|_{L_1(0,\infty) \rightarrow L_1(0,\infty)} \leqslant \frac{1}{1 - \alpha} e^{\xi^2t} \qquad (t \geqslant 0).
\]
\end{prop}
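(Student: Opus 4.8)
\emph{Plan.} The natural strategy is to apply Corollary~\ref{estimates-perturbed} to the conjugated semigroup $T^{\xi}$ on $L_2(0,\infty)$, whose generator is $\Delta_{\textnormal D,\xi}$. By~\eqref{Txi-est} we have $\|T^{\xi}(t)\|_{1\to1} \leqslant e^{|\xi|^2t}$ and $\|T^{\xi}(t)\|_{2\to2} \leqslant e^{|\xi|^2t}$, so the semigroup bounds required in Corollary~\ref{estimates-perturbed} hold with $M=1$ and $\omega=|\xi|^2$. Moreover, Lemma~\ref{Miyadera-Voigt-cond-exponential} yields $\|V(\lambda-\Delta_{\textnormal D,\xi})^{-1}\|_{1\to1}\leqslant\alpha$ for every $\lambda>|\xi|^2$; taking $\lambda_k:=|\xi|^2+1/k$ provides the sequence in $(\omega,\infty)$ decreasing to $\omega$ that is needed for~\eqref{Miyadera-Voigt-cond}. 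Once we know in addition that $V$ is $T^{\xi}$-admissible and that $(T^{\xi})_V(t)=\rho_{\xi}T_V(t)\rho_{\xi}^{-1}$ on $\dom(\rho_{\xi}^{-1})$, Corollary~\ref{estimates-perturbed} gives $\|(T^{\xi})_V(t)\|_{1\to1}\leqslant\frac{1}{1-\alpha}e^{|\xi|^2t}$ for all $t\geqslant0$, which is exactly the asserted estimate.

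\emph{The main obstacle} is the $T^{\xi}$-admissibility of $V$ (and the conjugation identity): Proposition~\ref{weighted-scontinuity} supplies both only for potentials bounded from below, and under~\eqref{int-cond2} the potential $V$ need not be bounded below near $0$. I would remove this obstacle by passing through the truncations $V_n:=(V\wedge n)\vee(-n)$. For a bounded potential $W$ with $|W|\leqslant|V|$ one has $\|W(\lambda-\Delta_{\textnormal D,\pm\xi})^{-1}\|_{1\to1}\leqslant\alpha$ for $\lambda>|\xi|^2$ (by Lemma~\ref{Miyadera-Voigt-cond-exponential} and $|W|\leqslant|V|$), so Proposition~\ref{estimates-perturbed-neu}, applied on $L_1$ to the positive $C_0$-semigroups $T^{\pm\xi}_1$ that extend $T^{\pm\xi}|_{L_1\cap L_2}$, gives $\|(T^{\xi}_1)_W(t)\|_{1\to1}\leqslant\frac{1}{1-\alpha}e^{\lambda t}$ and, via $((T^{\xi})_W(t))^*=(T^{-\xi})_W(t)$, also $\|(T^{\xi})_W(t)\|_{\infty\to\infty}\leqslant\frac{1}{1-\alpha}e^{\lambda t}$; Riesz--Thorin interpolation then bounds $\|(T^{\xi})_W(t)\|_{2\to2}$ by $\frac{1}{1-\alpha}e^{\lambda t}$, uniformly in such $W$. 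Applying this to $W=-(V^{\pm}\wedge n)$ and invoking \cite[Prop.~2.2]{Vo88} (for $-V^{\pm}$) and \cite[Prop.~3.3(b)]{Vo88} (for $V^+$) shows that $V$ is $T^{\xi}$-admissible; the identity $(T^{\xi})_V(t)=\rho_{\xi}T_V(t)\rho_{\xi}^{-1}$ then follows by the same strong-limit scheme as in the proof of Proposition~\ref{weighted-scontinuity}, using that for bounded potentials it is precisely the statement of that proposition.

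Finally, it is worth noting that one can bypass the admissibility discussion altogether: for the bounded potentials $V_n$, Proposition~\ref{estimates-perturbed-neu} applied on $L_1$ directly gives $\|\rho_{\xi}T_{V_n}(t)\rho_{\xi}^{-1}\|_{1\to1}=\|(T^{\xi}_1)_{V_n}(t)\|_{1\to1}\leqslant\frac{1}{1-\alpha}e^{|\xi|^2t}$ (the first equality being the bounded case of Proposition~\ref{weighted-scontinuity} together with consistency of the $L_1$- and $L_2$-perturbations, as in the proof of Corollary~\ref{estimates-perturbed}), and since $V$ is $T$-admissible we have $T_{V_n}(t)\to T_V(t)$ strongly on $L_2$; passing to the limit along a subsequence and applying Fatou's lemma yields $\|\rho_{\xi}T_V(t)\rho_{\xi}^{-1}f\|_1\leqslant\frac{1}{1-\alpha}e^{|\xi|^2t}\|f\|_1$ for all relevant $f$, which is the claim. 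Either way, the technical heart of the proof is the bookkeeping that transfers the perturbation estimate from bounded potentials to the possibly unbounded $V$; the resolvent bound itself is already supplied by Lemma~\ref{Miyadera-Voigt-cond-exponential}.
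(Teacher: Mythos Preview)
Your proposal is correct and, in its ``bypass'' form, coincides with the paper's own proof: the paper applies Corollary~\ref{estimates-perturbed} to the bounded truncations $V_n=(V\wedge n)\vee(-n)$ (for which Proposition~\ref{weighted-scontinuity} gives $(T^{\xi})_{V_n}(t)f=\rho_{\xi}T_{V_n}(t)\rho_{\xi}^{-1}f$ on $\dom(\rho_{\xi}^{-1})$), obtains $\|(T^{\xi})_{V_n}(t)\|_{1\to1}\leqslant\frac{1}{1-\alpha}e^{|\xi|^2t}$, and then passes to the limit using the strong convergence $T_{V_n}(t)\to T_V(t)$. One small remark: when you invoke Proposition~\ref{estimates-perturbed-neu} ``directly'' in the bypass, it only yields the bound $\frac{1}{1-\alpha}e^{\lambda t}$ for each fixed $\lambda>|\xi|^2$; the sharpening to $e^{|\xi|^2t}$ requires letting $\lambda\downarrow|\xi|^2$, which is precisely what Corollary~\ref{estimates-perturbed} packages---so citing that corollary (as the paper does) is the cleanest route.
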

\begin{proof}
Recall the definition of $T^\xi$ from the first paragraph of the present subsection.
By~\eqref{Txi-est} we have $\|T^{\xi}(t)\|_{1 \rightarrow 1} \leqslant e^{\xi^2t}$ and $\|T^{\xi}(t)\|_{2 \rightarrow 2} \leqslant e^{\xi^2t}$ for all $t \geqslant 0$.
Now fix $n \in \N$ and let $V_n:=(V \wedge n) \vee (-n) \in L_{\infty}(0,\infty)$. Then 
\begin{equation}\label{eq-bound-pert-weig-semi}
(T^{\xi})_{V_n}(t)f = \rho_{\xi}T_{V_n}(t)\rho_{\xi}^{-1}f \qquad (t \geqslant 0,\ f \in \dom(\rho_{\xi}^{-1})) 
\end{equation}
by Proposition~\ref{weighted-scontinuity}. Moreover, we have $\|\mathopen{V_n(\lambda - \Delta_{\textnormal D,\xi})^{-1}}\|_{1 \rightarrow 1} \leqslant \alpha$ for all $\lambda > \xi^2$, by Proposition~\ref{Miyadera-Voigt-cond-exponential}(a). Hence, Corollary~\ref{estimates-perturbed} implies that
\[
\|(T^{\xi})_{V_n}(t)\|_{1 \rightarrow 1} \leqslant \frac{1}{1 - \alpha}e^{\xi^2t} \qquad (t \geqslant 0).
\]
Now the assertion follows from \eqref{eq-bound-pert-weig-semi} and the fact that $T_{V_n}(t) \rightarrow T_V(t)$ strongly for all $t \geqslant 0$.
\end{proof}

The next proposition deals with the exponentially weighted $L_1$-estimates for Schrödinger semigroups on $((0,\infty),m^2\lambda)$.

\begin{prop}\label{l1-estimate-double}
Let \eqref{int-cond2} be true. Let $\xi \in \R$. Then
\[
\|m^{-1}\rho_{\xi}T_V(t)\rho_{\xi}^{-1}m\|_{L_1(m^2\lambda) \rightarrow L_1(m^2\lambda)} \leqslant \frac{2\sqrt2}{1 - \alpha}e^{2\xi^2t} \qquad (t \geqslant 0).
\]
Moreover, if $\xi \leqslant 0$, then 
\[
\|m^{-1}\rho_{\xi}T_V(t)\rho_{\xi}^{-1}m\|_{L_1(m^2\lambda) \rightarrow L_1(m^2\lambda)} \leqslant \frac{1}{1 - \alpha}e^{\xi^2t} \qquad (t \geqslant 0).
\]
\end{prop}
\begin{proof}
Let $T^{\xi,m}$ be defined as in the paragraph preceding Proposition~\ref{Miyadera-Voigt-cond-exponential}, and recall \eqref{l2-estimate-exp-m}. Moreover, note that Proposition~\ref{stoch-ultra-double-weighted} yields
\[
\|T^{\xi,m}(t)\|_{L_1(m^2\lambda) \to L_1(m^2\lambda)} \leqslant c_\xi^{3/2}e^{c_{\xi}\xi^2t} \qquad (t \geqslant 0),
\]
where $c_{\xi}:= 2$ if $\xi > 0$ and $c_{\xi}:= 1$ if $\xi \leqslant 0$.
Now one completes the proof by the same argument as in the proof of Proposition~\ref{l1-estimate-exponential}, using part~(b) of Proposition~\ref{Miyadera-Voigt-cond-exponential} instead of part~(a).
\end{proof}

With the above weighted $L_1$-estimates at hand we can finally prove our main result.

\begin{proof}[Proof of Theorem~\ref{thm-main}]
It follows from Propositions~\ref{ic-impl-fs} and~\ref{admissibility} that $V$ satisfies the form smallness condition~\eqref{form-small-dirichlet}
and hence is $T$-admissible.
Now Propositions \ref{l1-estimate-exponential} and~\ref{l1-estimate-double} show that Corollary~\ref{kernel-bound-Schr-semi} is applicable, and this gives the desired kernel estimate.
\end{proof}

To conclude, we prove a kind of converse to Theorem~\ref{thm-main}, for negative potentials.

\begin{thm}\label{thm-necessary}
Let $V$ be $T$-admissible, and assume that $V \leqslant 0$. Further, assume that for every $t > 0$ the operator $T_V(t)$ has an integral kernel $k_t^V\in L_{\infty}((0,\infty) \times (0,\infty))$ such that \eqref{the-kernel-estimate} holds for some $c > 0$ independent of $t$. Then 
\begin{equation}\label{almost-int-cond}
\int_0^{\infty} x |V(x)| \dx < \infty.    
\end{equation}
\end{thm}
\begin{proof}
For $t > 0$ let $k_t\colon (0,\infty) \times (0,\infty) \to (0,\infty)$ be the integral kernel of $T(t)$; see \eqref{kt1}. One easily sees that 
\[
\left(1 + \frac{(x-y)^2}{4t}\right)^{3\alpha/4} \leqslant M e^{(x-y)^2/8t} \qquad (t,x,y > 0)
\]
for some $M \geqslant 1$, so it follows from \eqref{the-kernel-estimate} and \eqref{kernel-est-0} that 
\[
k_t^V(x,y) \leqslant c M \left(1 \wedge \frac{xy}{t}\right)t^{-1/2}e^{-(x-y)^2/8t} \leqslant 4\sqrt{2}\,c M k_{2t}(x,y) \qquad (\text{a.e. } x,y \in (0,\infty))
\]
for all $t > 0$. With $C:=(4\sqrt{2}\,c M) \vee 1$ we deduce that $T_V(t) \leqslant CT(2t)$ and hence $(T^m)_V(t) = (T_V)^m(t) \leqslant C T^m(2t)$ for all $t \geqslant 0$. This together with the $L_1$-stochasticity of $T^m$ (see Remark~\ref{stoch-ultra-double-weighted-rem}) implies that 
\begin{equation}\label{L1-contract-TV}
\sup_{t \geqslant 0} \|(T^m)_V(t)\|_{L_1(m^2\lambda) \to L_1(m^2\lambda)} \leqslant C.
\end{equation}

Now $T^m(t)|_{L_1 \cap L_2(m^2\lambda)}$ extends to a stochastic operator $T_1^m(t)$ on $L_1(m^2\lambda)$,
and the mapping $T_1^m \colon [0,\infty) \to \mathcal{L}(L_1(m^2\lambda))$ thus defined is a positive $C_0$-semigroup by \cite[Thm.~7]{Vo92}. Further observe that, due to \eqref{L1-contract-TV}, we can apply \cite[Prop.~3.1(a)]{Vo86} and obtain the $T_1^m$-admissibility of $V$ and the identity $(T_1^m)_V(t)f = (T^m)_V(t)f$ for all $f \in L_1 \cap L_2(m^2\lambda)$.
Therefore, Proposition~\ref{Global-Kato} implies that $V$ satisfies the global Kato class condition 
\[
\int_0^{\infty} \|VT_1^m(t)f\|_1 \,dt \leqslant (C-1) \|f\|_1 \qquad (f \in L_1(m^2\lambda)).
\]
The asserted inequality \eqref{almost-int-cond} now follows from Remark~\ref{global-Kato}.
\end{proof}

\textbf{Acknowledgement.} 

\sloppy
This paper is a contribution to the project M1 of the Collaborative Research Centre TRR 181 ``Energy Transfers in Atmosphere and Ocean" funded by the Deutsche Forschungsgemeinschaft (DFG, German Research Foundation) under project number 274762653.

\fussy

\end{document}